\newtheorem{theorem}{Theorem}[section]
\newtheorem{proposition}[theorem]{Proposition}
\newtheorem{lemma}[theorem]{Lemma}
\newtheorem{corollary}[theorem]{Corollary}
\newtheorem{example}[theorem]{Example}
\newtheorem{definition}[theorem]{Definition}
\newtheorem*{notation}{Notation}
\newtheorem*{theorem*}{Theorem}
\theoremstyle{remark}
\newtheorem*{remark}{Remark}
\DeclareMathAlphabet{\mathpzc}{OT1}{pzc}{m}{it}
\DeclareMathOperator{\tensor}{\otimes}
\DeclareMathOperator{\isom}{\cong}
\DeclareMathOperator{\I}{\mathcal{I}}
\DeclareMathOperator{\F}{\mathbb{F}}
\DeclareMathOperator{\FA}{\mathbb{F}\mathit{A}}
\DeclareMathOperator{\FG}{\mathbb{F}\mathit{G}}
\DeclareMathOperator{\FH}{\mathbb{F}\mathit{H}}
\DeclareMathOperator{\T}{\mathbbmss{T}}
\DeclareMathOperator{\Z}{\mathbb{Z}}
\DeclareMathOperator{\B}{\mathbbm{B}}
\DeclareMathOperator{\sB}{\mathbbm{b}}
\DeclareMathOperator{\C}{\mathscr{C}}
\DeclareMathOperator{\D}{\mathscr{D}}
\DeclareMathOperator{\A}{\mathbbm{A}}
\DeclareMathOperator{\Hom}{\mathrm{Hom}}
\DeclareMathOperator{\PHom}{\mathrm{\underline{Hom}}}
\DeclareMathOperator{\End}{\mathrm{End}}
\DeclareMathOperator{\Ext}{\mathrm{Ext}}
\DeclareMathOperator{\PMod}{\hyp\mathrm{\underline{Mod}}}
\DeclareMathOperator{\Pmod}{\hyp\mathrm{\underline{mod}}}
\DeclareMathOperator{\Mod}{\hyp\mathrm{mod}}
\DeclareMathOperator{\Ric}{\mathbf{Ric}}
\renewcommand{\S}{\mathbbmss{S}}
\mathchardef\hyp="2D
\renewenvironment{proof}[1][\proofname]{\par
    \pushQED{\qed}%
    \normalfont \topsep0\p@\@plus6\p@ \labelsep1em\relax
    \trivlist
    \item[\hskip\labelsep\indent\bfseries #1]\ignorespaces
    \mbox{}
}{
    \popQED\endtrivlist\@endpefalse
}
\begin{document}

\author{William Wong}
\title{Some perverse equivalences of $SL(2,q)$ in its defining characteristic}
\maketitle

\subsection*{Abstract}
\addcontentsline{toc}{chapter}{Abstract}
In this article, we study the modular representations of the special linear group of degree two over a finite field in defining characteristic. In particular, we study the automorphisms of derived category of representations. We have been able to obtain a new type of autoequivalence. 

This autoequivalence has some uncommon features. It is more conveniently conceived and proved using the representation theory of its Brauer correspondence but at the same time it can be very neatly described, using a type of derived equivalence called perverse equivalence, in global settings.

\section{Introduction}

The relation between blocks of group algebra and its subgroups is a major topic in representation theory. Let $G$ be a group and $\F$ be an algebraically closed field, let $B$ be a block of the group algebra $\FG$. Brauer's main theorems suggest that such a block with defect group $D$ can be associated with a block $b$ in $\F N_G(D)$ with the same defect group. The two blocks $B$ and $b$ are said to be Brauer correspondent. In particular Brou\'e's abelian defect conjecture \cite[6.2, Question]{Broue}\cite[p.132]{Koenig} has been studied a lot by the research community, the most successful case being that of a cyclic defect group \cite{Rouquier}. However in general the abelian case is yet to be solved, and most of the work is a case-by-case analysis.

Here we focus on the special linear group of degree two over a finite field in defining characteristic. The Brou\'e conjecture in this case is solved by Okuyama \cite{Okuyama} and Yoshii \cite{Yoshii} by a slightly unconventional manner. They use an implicit generating method which is different from the work of other authors: They construct a chain of derived equivalent algebras and they show that each end is Morita equivalent to the required blocks respectively.

In this article a similar method to Okuyama is used to prove directly that there is an block-exchanging autoequivalence in the derived category of the summands of full defect blocks of $SL_2(q)$ in defining characteristic (Theorem \ref{main}). We do this by considering their local module category, in which there is a self-equivalence in the module category induced by a functor. Using this functor we can deduce some extension relations in the global module category. Then this information can be arranged in a relatively nice manner.

To utilise this information we have to know the notion of perverse equivalence, a recent tool developed by Chuang and Rouquier in a preprint paper \cite{ChuangRouquier}. It considers a subset of derived equivalences, which is inspired from the gluing of perverse sheaves, and can be described using combinatorial data. These perverse equivalences are far easier to handle compared to general derived equivalences. In some cases one can directly observe such equivalences by looking at the correspondence of simple objects (Example \ref{elementary}). Dreyfus-Schmidt \cite{Leo} has taken the equivalence between Serre subcategories and give the notion of poset perverse equivalences, which will be particularly useful when considering composition of two perverse equivalences (which is not guaranteed to be perverse).

Let $p$ be a prime number, $q=p^n$ a positive power of $p$, $G=SL_2(q)$ the special linear group of degree two over the field of $q=p^n$ elements. Let $\F$ be an algebraically closed field of characteristic $p$. The autoequivalence is on the full defect block(s) of the group algebra $\FG$. We shall introduce its representations (as global modules). Let $P$ be a Sylow $p$-subgroup and $H=N_G(P)$ the normaliser of $P$ in $G$. We shall also introduce the representations of $\FH$ as local modules. Global and local modules are closely related via Brauer correspondence of blocks, Green correspondence of non-projective modules and equivalence between their stable module category. 

Base-$p$ numbers are very useful throughout the article. For this article we will use the base-$p$ representation of integers as defined below:
\begin{definition}\label{basep}
	For any integer $a$, $0 \leq a \leq q-1$, define $a_0, a_1,...,a_{n-1}$ to be $n-1$ integers between 0 and $p-1$ inclusive such that \begin{equation*} a=\sum_{i=0}^{n-1}p^i a_i \end{equation*} is the base-$p$ expression of $a$. On the other hand, an $n$-tuple of base-$p$ digits $(a_0,a_1,...,a_{n-1})$ uniquely determine an integer $a$ between 0 and $q-1$.
\end{definition}
  
We use the position of the algebra to indicate its side of action on the module. For example an $A$-module has $A$-action from the left. A module-$B$ is a right $B$-module in the usual sense. An $A$-bimodule-$B$ means there is an left $A$-action and right $B$-action on the module.
In this article we compose functions and functors from right to left. We always assume all vector spaces are finite dimensional, direct sums are finite. We adopt the following convention for subscripts surrounding $\Hom$ and $\tensor$. Let $U$, $V$ be an $A$-module for an algebra $A$, $\Hom_A(U,V)$ means the set of $A$-module maps from $U$ to $V$. When there is no symbol beneath, then $\Hom(U,V)$ is the set of $\F$-linear maps from $U$ to $V$. We will treat $\Hom(U,V)$ and $U \tensor V$ as $G$-modules automatically. See Section \ref{Sect:Reps} for details.

\section{Categories and Equivalences}

Let $A$ be an algebra. The category of all finitely generated $A$-modules is denoted $A\Mod$. Two algebras $A$ and $B$ are Morita equivalent if $A\Mod$ is categorically equivalent to $B\Mod$. The stable module category of $A$ is denoted by $A\PMod$. We assume readers have basic knowledge about these categories, in particular stable equivalence of Morita type. From these we take for granted the readers will be able to understand this theorem of Linckelmann's, which is crucial to establish our proof.

\begin{theorem}[Linckelmann's Theorem]\label{Trick}
	Let $A$ and $B$ be two self-injective algebras with no simple projective summands. If $A$ and $B$ are stably equivalent of Morita type by an equivalence which sends simple $A$-modules to simple $B$-modules, then $A$ and $B$ are Morita equivalent.
\end{theorem}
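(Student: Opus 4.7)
The plan is to upgrade the given stable equivalence of Morita type into an honest Morita equivalence by showing that its underlying functor already sends indecomposable projectives to indecomposable projectives, and in particular preserves projective covers of simples.

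Realize the stable equivalence via a pair of bimodules: an $A$-$B$-bimodule $M$ and a $B$-$A$-bimodule $N$, each projective on both sides, with $M \tensor_B N \isom A \oplus U$ and $N \tensor_A M \isom B \oplus V$ as bimodules, where $U, V$ are projective bimodules. Set $F := N \tensor_A -$, which realises the equivalence $A\Mod \to B\Mod$ sending simples to simples, and $G := M \tensor_B -$ for its inverse. Because $N$ is projective as a right $A$-module (and symmetrically for $M$), both $F$ and $G$ are exact and preserve projectives. Since neither $A$ nor $B$ has simple projective summands, every simple module is non-projective; combined with the hypothesis that $F$ preserves simples and the fact that stable equivalences of Morita type induce bijections on iso classes of non-projective indecomposables, the assignment $S \mapsto F(S) =: T_S$ is a bijection between iso classes of simples of $A$ and of $B$.

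Now I apply $F$ to the projective cover sequence $0 \to \rad P_A(S) \to P_A(S) \to S \to 0$. Exactness of $F$ together with $F(S) = T_S$ yields a surjection from the projective $B$-module $F(P_A(S))$ onto the simple $T_S$; hence $F(P_A(S)) \isom P_B(T_S) \oplus Q_S$ for some projective $B$-module $Q_S$, and analogously $G(P_B(T_S)) \isom P_A(S) \oplus R_S$.

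The crux, and the main obstacle, is to show $Q_S = 0$ (equivalently $R_S = 0$) for every simple $S$. My approach is to exploit the fact that a stable equivalence of Morita type between self-injective algebras preserves $\Ext^1$ between non-projective modules (since for self-injective algebras $\Ext^1_A(X,Y)$ can be computed in the stable category via $\Omega$), so $\dim \Ext^1_A(S,S') = \dim \Ext^1_B(T_S, T_{S'})$ for all pairs of simples. This identifies the composition factors in $\rad P_A(S)/\rad^2 P_A(S)$ with those of $\rad P_B(T_S)/\rad^2 P_B(T_S)$ under the bijection $S' \mapsto T_{S'}$. Iterating this down the radical filtration (tracking the Nakayama permutation, which is also preserved by the stable equivalence as it governs socles of projective covers), one matches every layer and concludes $\dim P_A(S) = \dim P_B(T_S)$. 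A dimension count using the bimodule identities, comparing $\dim N = \dim F(A) = \sum_S (\dim S)\dim F(P_A(S))$ with $\dim B = \sum_T (\dim T)\dim P_B(T)$, then forces $Q_S = 0$. With $F(P_A(S)) = P_B(T_S)$ for every $S$, the functor $F$ is an exact equivalence preserving projective covers, so $U = V = 0$ in the bimodule identities and the bimodule $N$ realises the desired Morita equivalence between $A$ and $B$.
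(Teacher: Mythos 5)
The paper does not prove this statement at all --- it is quoted from Linckelmann \cite{Linckelmann} and used as a black box --- so your argument has to stand on its own. Its skeleton is the right one, and is indeed the skeleton of Linckelmann's proof: realise the stable equivalence by bimodule functors $F$ and $G$ that are exact and preserve projectives, observe $F(P_A(S))\cong P_B(T_S)\oplus Q_S$, and reduce everything to showing $Q_S=0$. The final deduction is also sound in principle: $F$ is exact and sends simples to simples, hence preserves composition length, so once one knows $\ell(P_A(S))=\ell(P_B(T_S))$ the extra summand $Q_S$ must vanish and $U=V=0$ follows.

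The genuine gap is the step you yourself flag as the crux. Preservation of $\Ext^1$ between simples identifies only the quiver, i.e.\ the layer $\rad P/\rad^2 P$; the deeper radical layers are governed by the relations, which are not visible in $\Ext^1$, so ``iterating down the radical filtration'' is not a valid induction and cannot yield $\dim P_A(S)=\dim P_B(T_S)$. (For instance $k[x]/(x^2)$ and $k[x]/(x^3)$ are self-injective with no simple projective summands, have identical $\Ext^1$-quivers and Nakayama permutations, yet projectives of different lengths; so no argument of this shape can close.) Your concluding dimension count also tacitly assumes $\dim N=\dim B$, for which there is no justification, and the asserted bijectivity of $S\mapsto T_S$ only comes with injectivity for free --- surjectivity onto the simples of $B$ is exactly as hard as the missing step. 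The standard repair, and the real content of Linckelmann's theorem, is to use self-injectivity to realise the inverse equivalence by the dual bimodule and to prove by an adjunction/socle argument that it too sends simples to simples. Once both $F$ and $G$ preserve simples, both preserve composition length, and $\ell(GF(X))=\ell(X)+\ell(U\otimes_A X)=\ell(X)$ for all $X$ forces $U=0$ (and symmetrically $V=0$) without ever comparing projective covers layer by layer.
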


Okuyama has used this theorem to prove Brou\'e's abelian defect conjecture for the blocks of $SL_2(p^n)$ \cite{Okuyama}\cite{Yoshii}.

Considering the representation theory of an algebra $A$, another common ground is to work with its derived category of the module category. In our case we would only be interested in the bounded derived category. We write $D^b(A)$ for the bounded derived category of $A\Mod$. We reiterate Rickard's theorem for our case of group algebras (which are self-injective algebras) to emphasis the relation between $D^b(A)$ and $A\Pmod$.

\begin{theorem}[Rickard's Theorem] Let $A$ be a self-injective algebra. The triangulated category $D^b(A)/D^{pc}(A)$ of derived category of $A\Mod$ quotient by perfect complexes of $A\Mod$ is equivalent naturally to $A\Pmod$ as a triangulated category. That is, there exists an equivalence $A\Pmod \to D^b(A)/D^{pc}(A)$ the following square commutes:
\begin{equation*}
\xymatrix{A\Mod \ar@{^{(}->}[r] \ar[d] & D^b(A) \ar[d] \\ A\Pmod \ar@{<->}^-{\sim}[r] & D^b(A)/D^{pc}(A)}
\end{equation*} \end{theorem}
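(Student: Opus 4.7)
The plan is to construct an explicit candidate functor $\Phi: A\Pmod \to D^b(A)/D^{pc}(A)$ by composing the natural inclusion $A\Mod \hookrightarrow D^b(A)$ with the Verdier quotient, and then to verify it is a triangulated equivalence in three steps. First, any finitely generated projective $A$-module, viewed as a complex concentrated in degree zero, is a bounded complex of projectives and hence a perfect complex, so it becomes isomorphic to zero in $D^b(A)/D^{pc}(A)$. Consequently any morphism of modules that factors through a projective is killed in the quotient, so the composite descends to a well-defined functor $\Phi$ making the required square commute.

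Next I would check that $\Phi$ is triangulated. The translation on $A\Pmod$ is the cosyzygy $\Omega^{-1}$, so for any module $M$ I would embed $M$ into an injective envelope $I$, which is projective by self-injectivity; the short exact sequence $0 \to M \to I \to \Omega^{-1}M \to 0$ induces a distinguished triangle in $D^b(A)$, and after passing to the quotient, where $I$ becomes zero, this yields the natural identification $\Omega^{-1}M \cong M[1]$. Standard triangles in $A\Pmod$ arise from short exact sequences of modules, and these map to distinguished triangles in the derived category. Essential surjectivity then follows by induction on the length of the cohomological support of an $X \in D^b(A)$: if $H^*(X)$ is concentrated in a single degree, say $X \cong M[-n]$, then $X \cong \Omega^n M$ in the quotient; otherwise the canonical truncation triangle $\tau_{<b}X \to X \to H^b(X)[-b] \to \tau_{<b}X[1]$ expresses $X$ as the cone of a morphism between objects each isomorphic, by induction and the previous identification, to a module, hence $X$ itself is isomorphic to a module.

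The principal obstacle is full faithfulness. A morphism $M \to N$ in $D^b(A)/D^{pc}(A)$ is represented by a roof $M \xleftarrow{s} Y \to N$ with $\mathrm{cone}(s)$ perfect, and two roofs agree when their difference factors through a perfect complex. I would replace $Y$ by a projective resolution of $M$ and apply a stupid truncation, using that the discrepancy with the full resolution is itself a perfect complex of projectives (self-injectivity is crucial here, ensuring syzygies stay in the required place), to convert any roof into an honest $A$-linear map $M \to N$. It remains to show that a morphism of modules factoring through an arbitrary perfect complex already factors through a single projective module: again leveraging self-injectivity, one lifts and contracts the intervening bounded complex of projectives onto a single projective summand. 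This last reduction is the delicate step; once established, one identifies $\PHom_A(M,N)$ with the morphism set in the quotient, completing the equivalence.
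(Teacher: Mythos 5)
The paper does not prove this statement at all: it is quoted as Rickard's theorem (from the cited work of Rickard, and also due to Keller--Vossieck) and used as a black box, so there is no in-paper argument to compare against. Your sketch reconstructs the classical proof along the standard lines: the composite $A\Mod \hookrightarrow D^b(A) \to D^b(A)/D^{pc}(A)$ kills projectives and hence maps factoring through projectives; the triangle $0 \to M \to I \to \Omega^{-1}M \to 0$ with $I$ projective--injective identifies $\Omega^{-1}M$ with $M[1]$ in the quotient; full faithfulness reduces, via roofs and truncations of projective resolutions, to the statement that a module map factoring through a perfect complex factors through a single projective. All of these ingredients are correct and are exactly the ones in Rickard's original argument.

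One logical point you should repair in a full write-up: your essential-surjectivity induction uses the truncation triangle to express $X$ as a cone of a morphism between objects already known to lie in the essential image, and concludes $X$ lies in the essential image too. The essential image of a triangle functor is closed under cones only once you know the functor is \emph{full} (you must lift the connecting morphism $H^b(X)[-b] \to \tau_{<b}X[1]$ to a morphism in $A\Pmod$ before you can realise its cone there). Since you prove full faithfulness last, the argument as ordered is circular; the fix is simply to establish full faithfulness on modules (and their shifts) first and then run the d\'evissage. With that reordering, and with the ``factoring through a perfect complex implies factoring through a projective'' lemma written out carefully (using that bounded complexes of projectives are also bounded complexes of injectives over a self-injective algebra, so maps out of a stalk complex into them are honest chain maps), your proposal is a sound proof of the theorem.
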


Using this, every derived equivalence $F:D^b(B) \to D^b(A)$ between two self-injective algebras induce a stable equivalence $\overline{F}:B\Pmod \to A\Pmod$. In particular $\overline{F}$ is of Morita type if $F$ is induced by a two-sided tilting complex.

These categories are interesting by their own rights. In here we focus to a particular type of derived equivalence called perverse equivalence.

\subsection{Perverse equivalence}

Perverse equivalence is developed by Chuang and Rouquier to describe a certain type of derived equivalence that can be constructed by some combinatorial data. It has its origins from the construction of perverse sheaves. However, it does not cover all types of derived equivalences, and composition of perverse equivalences might fail to be perverse. Although it has a broad application to various type of categories, we shall only define the perverse equivalences for derived categories of abelian categories to simplify things and allow us bypass some technicalities (such as $t$-structures and hearts). Then we shall give some examples to explain perverse equivalences for module categories of symmetric algebras. We start by the notion of Serre subcategory of an abelian category:

\begin{definition}\label{Serre}
Let $\C$ be an abelian category and $\D$ be a full subcategory. $\D$ is a \emph{Serre subcategory} if given any exact sequence $0 \to K \to 
L \to M \to 0$ in $\C$, $L \in \D$ if and only if $K, M \in \D$. \end{definition} 
Let $\C$ and $\C'$ be two abelian categories. Consider two filtrations \begin{equation*} 0=\C_{-1}\subset \C_0 \subset \dots \subset \C_r=\C \text{ and } 0=\C'_{-1}\subset \C'_0 \subset \dots \subset \C'_r=\C'  \end{equation*} of Serre subcategories and a function $\pi:\{0,...,r\} \to \Z$.
\begin{definition}\label{def:Perv} An equivalence $F:D^b(\C) \to D^b(\C')$ is \emph{perverse relative to $(\C_\bullet, \C'_\bullet, \pi)$} if the following holds:
\begin{itemize} \item $F$ restricts to equivalences $D^b_{\C_i}(\C) \to D^b_{\C'_i}(\C')$. 
\item $F[-\pi (i)]$ induces equivalences $\C_i/\C_{i-1} \to \C'_i/\C'_{i-1}$.\end{itemize} That is, with the natural embedding from $\C_i/\C_{i-1}$ to  $D^b_{\C_i}(\C)/D^b_{\C_{i-1}(\C)}$ we have
\begin{equation*}
\xymatrix{D^b_{\C_i}(\C)/D^b_{\C_{i-1}(\C)} \ar[r]^F  & D^b_{\C_i}(\C)/D^b_{\C_{i-1}(\C)}  \\
	\C_i/\C_{i-1} \ar@{^{(}->}[u] \ar[r]^{F[-\pi (i)]} & \C'_i/\C'_{i-1} \ar@{^{(}->}[u]}
\end{equation*} (c.f. \cite[Definition 2.53]{ChuangRouquier}). \end{definition}
Given an equivalence $F$ perverse relative to $(\C_\bullet, \C'_\bullet, \pi)$, the filtration $\C'_\bullet$ is determined by $\C_\bullet$ and $F$ via $\C'_i=\C'\cap F(D^b_{\C_i}(\C))$.
\begin{proposition}\label{Prop:Perv} Let $F: D^b(\C) \to D^b(\C')$ be perverse relative to $(\C_\bullet, \C'_\bullet, \pi)$.  \begin{enumerate}
\item (reversibility) $F^{-1}$ is perverse relative to $(\C'_\bullet, \C_\bullet, -\pi)$.
\item (composability) Let $F':D^b(\C') \to D^b(\C'')$ be perverse relative to $(\C'_\bullet, \C''_\bullet, \pi')$, then $F' \circ F$ is perverse relative to $(\C_\bullet, \C''_\bullet, \pi+\pi')$.
\item (refineability) Let $\tilde{\C}_\bullet=(0=\tilde{\C}_{-1} \subset \dots \subset \tilde{\C}_{\tilde{r}}$) be a refinement of $\C_\bullet$. Define the weakly increasing map $f:\{0,\dots,\tilde{r}\} \to \{0,\dots,r\}$ such that $\tilde{\C}_\bullet$ collapses to $\C_\bullet$ under $f$ (i.e. $\C_{f(i)-1} \subset \tilde{\C}_i \subset \C_{f(i)}$). Then $F$ is perverse relative to $(\tilde{\C}_\bullet, \pi\circ f)$. 
\item If $\pi=0$ then $F$ restricts to an equivalence $\C \to \C'$. 
\item The information $(\C_\bullet, \pi)$ determine $\C'$ up to equivalence.
\end{enumerate} \end{proposition}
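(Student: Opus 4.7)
The plan is to unpack the two defining clauses of Definition \ref{def:Perv} --- the restriction $F: D^b_{\C_i}(\C) \to D^b_{\C'_i}(\C')$ and the shifted quotient equivalence $F[-\pi(i)]: \C_i/\C_{i-1} \to \C'_i/\C'_{i-1}$ --- and in each part verify that both clauses survive the operation under consideration. Parts (1), (2), and (3) are essentially formal bookkeeping, while (5) will be the main obstacle.

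For (1), $F^{-1}$ automatically restricts to an equivalence $D^b_{\C'_i}(\C') \to D^b_{\C_i}(\C)$, and $(F[-\pi(i)])^{-1} = F^{-1}[-(-\pi(i))]$, so the shift $-\pi$ is correct. For (2), stacking restrictions gives $D^b_{\C_i}(\C) \to D^b_{\C''_i}(\C'')$, and the identity $F'[-\pi'(i)] \circ F[-\pi(i)] = (F' \circ F)[-(\pi+\pi')(i)]$ produces the required quotient equivalence. For (3), I would define the refined filtration on $\C'$ by $\tilde{\C}'_i := \C' \cap F(D^b_{\tilde{\C}_i}(\C))$, following the remark preceding the proposition. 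The sandwich $\C_{f(i)-1} \subset \tilde{\C}_i \subset \C_{f(i)}$ transports under $F$ to $\C'_{f(i)-1} \subset \tilde{\C}'_i \subset \C'_{f(i)}$, yielding the refined restriction condition. For the refined quotient condition, each $\tilde{\C}_i/\tilde{\C}_{i-1}$ arises naturally as a Serre subquotient of $\C_{f(i)}/\C_{f(i)-1}$, and $F[-\pi(f(i))]$ restricted to $\C_{f(i)}/\C_{f(i)-1}$ (an equivalence onto $\C'_{f(i)}/\C'_{f(i)-1}$ by hypothesis) sends $\tilde{\C}_i/\tilde{\C}_{i-1}$ equivalently onto $\tilde{\C}'_i/\tilde{\C}'_{i-1}$.

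For (4), if $\pi \equiv 0$, the top restriction gives an equivalence $F: D^b(\C) \to D^b(\C')$, and the no-shift quotient conditions say that $F$ sends the natural heart in each Verdier quotient $D^b_{\C_i}(\C)/D^b_{\C_{i-1}}(\C)$ to the natural heart in $D^b_{\C'_i}(\C')/D^b_{\C'_{i-1}}(\C')$. Since the standard $t$-structure on $D^b(\C)$ is recovered by iteratively gluing the standard $t$-structures on these Verdier quotients via the filtration $\C_\bullet$ (each gluing being trivial, i.e.\ without shift), $F$ must transport the standard $t$-structure on $D^b(\C)$ to the standard one on $D^b(\C')$; that is, $F$ is $t$-exact, and hence restricts to an equivalence of hearts $\C \to \C'$.

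For (5), the assertion is that $(\C_\bullet, \pi)$ alone determines $\C'$ up to equivalence, independently of the choice of $F$. My strategy is induction on $r$. The base case $r = 0$ forces $\C' \simeq \C$ directly from the sole quotient equivalence. For the inductive step, the hypothesis applied to the truncated filtration determines $\C'_{r-1}$ up to equivalence, and the top stratum $\C'/\C'_{r-1} \simeq \C_r/\C_{r-1}$ is supplied by the level-$r$ quotient equivalence. The heart $\C'$ is then recovered as the heart of a glued $t$-structure on $D^b(\C)$ assembled out of these strata, with the gluing data controlled by the shift function $\pi$. The main obstacle is verifying that this glued $t$-structure is uniquely determined up to equivalence by $(\C_\bullet, \pi)$; this is really a statement about the uniqueness of the Beilinson--Bernstein--Deligne gluing construction, for which I would defer to the treatment in \cite[Section 2]{ChuangRouquier}.
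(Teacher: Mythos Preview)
The paper does not actually prove this proposition: its entire proof reads ``See \cite{ChuangRouquier}.'' Your proposal therefore goes well beyond what the paper offers, supplying direct arguments for parts (1)--(4) and a sketch for (5).

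Your arguments for (1)--(4) are correct and follow the natural route of unpacking Definition~\ref{def:Perv}. For (3) one small point deserves care: you define $\tilde{\C}'_i := \C' \cap F(D^b_{\tilde{\C}_i}(\C))$, but you should check that $F(D^b_{\tilde{\C}_i}(\C))$ is in fact of the form $D^b_{\tilde{\C}'_i}(\C')$ for a Serre subcategory $\tilde{\C}'_i$. The cleanest way is to note that $\tilde{\C}_i/\C_{f(i)-1}$ is a Serre subcategory of $\C_{f(i)}/\C_{f(i)-1}$, and the abelian equivalence $F[-\pi(f(i))]$ carries it to a Serre subcategory of $\C'_{f(i)}/\C'_{f(i)-1}$, which then lifts to the desired $\tilde{\C}'_i$ sandwiched between $\C'_{f(i)-1}$ and $\C'_{f(i)}$. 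You gesture at this but it is the one place where a reader might pause.

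For (5) you end up, like the paper, deferring the real content (uniqueness of the glued $t$-structure) to \cite{ChuangRouquier}, so on that point your proposal and the paper are on equal footing. Overall your write-up is a genuine improvement in exposition over the bare citation; there is no gap, only the minor clarification in (3) noted above.
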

\begin{notation} From 5, since $(\C_\bullet, \pi)$ determine $\C'$ we might sometimes simplify and say a perverse equivalence $F$ is perverse relative to $(\C_\bullet, \pi)$ \end{notation}

\begin{proof} See \cite{ChuangRouquier}. \end{proof}

When every object in an abelian category $\C$ has finite composition series, each object can be broken down to a collection of simple objects components via short exact sequences. Then, by definition, a Serre subcategory is generated by the collection of all simple objects inside it. Thus we can use filtration of simple objects to replace the filtration of Serre subcategories, making the description more concrete. (c.f. \cite[2.2.6]{ChuangRouquier})
\begin{definition} Let $\C$ and $\D$ be abelian categories with finite composition series. Let $\S$ be the set of non-isomorphic simple objects in $\C$. We say that an equivalence $F:D^b(\C) \xrightarrow{\sim} D^b(\D)$ is perverse relative to $(\S_\bullet, \pi)$ when it is perverse relative to $(\C_\bullet, \pi)$ where $\S_\bullet$ is a filtration of isomorphism class of simple objects defined by $\C_\bullet$.  \end{definition} 
\begin{lemma}\label{general}
Let $\C$, $\D$ be abelian categories with finite composition series, \begin{equation*} \S_\bullet=(\emptyset=\S_{-1}\subset \S_0\subset \dots \subset \S_r=\S) \text{ and }\T_\bullet=(\emptyset=\T_{-1}\subset \T_0 \subset \dots \subset \T_r=\T) \end{equation*} be filtrations of isomorphism class of simple objects on $\C$ and $\D$ respectively. Let $p:\{0,\dots,r\} \to \Z$ be a function. An equivalence $F:D^b(\C) \xrightarrow{\sim} D^b(\D)$ is perverse relative to $(\S_\bullet, \T_\bullet, p)$ if for every $i$ the following holds. 
\begin{itemize}
\item Given $M \in \S_i - \S_{i-1}$, the composition factors of $H^r(F(M))$ are in $\T_{i-1}$ for $r\neq -p(i)$ and there is a filtration $L_1 \subset L_2 \subset H^{-p(i)}(I(M))$ such that the composition factors of $L_1$ and of $H^{-p(i)}(F(M))/L_2$ are in $\T_{i-1}$ and that of $L_2/L_1$ are in $\T_i-\T_{i-1}$.
\item The map $M \to L_2/L_1$ induces a bijection $\S_i-\S_{i-1} \xrightarrow{\sim} \T_i-\T_{i-1}$, hence there is a bijection $\beta_F:\S \to \T$.
\end{itemize}\end{lemma}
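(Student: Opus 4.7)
The plan is to show that the simple-object description given here is just a translation of Definition \ref{def:Perv} via the bijection between Serre subcategories of an abelian category of finite length and subsets of isomorphism classes of simple objects. Concretely, put $\C_i = \langle \S_i \rangle$ and $\D_i = \langle \T_i \rangle$, the Serre subcategories generated by $\S_i$ and $\T_i$. Under this identification, an object of $\C$ lies in $\C_i$ iff all its composition factors lie in $\S_i$, and a bounded complex lies in $D^b_{\C_i}(\C)$ iff each of its cohomology objects does; the analogous statement holds in $\D$. Working with simple objects and composition factors is therefore equivalent to the formulation with Serre subcategories used in Definition \ref{def:Perv}.

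For the forward direction, assume $F$ is perverse in the sense of Definition \ref{def:Perv} relative to $(\C_\bullet,\D_\bullet,p)$. Take a simple object $M \in \S_i - \S_{i-1}$; then $M \in \C_i$, so by the first bullet of Definition \ref{def:Perv} every $H^r(F(M))$ lies in $\D_i$, i.e. has composition factors in $\T_i$. The second bullet asserts that under the natural embedding $\D_i/\D_{i-1} \hookrightarrow D^b_{\D_i}(\D)/D^b_{\D_{i-1}}(\D)$, $F[-p(i)](M)$ lies in the heart $\D_i/\D_{i-1}$, forcing $H^r(F(M))$ to have composition factors in $\T_{i-1}$ whenever $r\neq -p(i)$, and forcing the image of $H^{-p(i)}(F(M))$ in the quotient to be a simple object from $\T_i - \T_{i-1}$. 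Pulling this simple subquotient back along the Serre-quotient functor $\D_i \to \D_i/\D_{i-1}$ produces the required filtration $L_1 \subset L_2 \subset H^{-p(i)}(F(M))$ with the prescribed composition factors, and the assignment $M \mapsto L_2/L_1$ provides the bijection $\beta_F$ on isomorphism classes of simples (bijectivity is ensured by the fact that the induced functor on quotients is an equivalence).

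For the converse, assume the two bulleted conditions of the lemma. Show by induction on $i$ that $F$ restricts to $D^b_{\C_i}(\C) \to D^b_{\D_i}(\D)$. The hypothesis applied to simples $M \in \S_j - \S_{j-1}$ with $j \le i$ gives $F(M) \in D^b_{\D_j}(\D) \subset D^b_{\D_i}(\D)$; since any object of $D^b_{\C_i}(\C)$ is obtained from objects of $\S_i$ via distinguished triangles (truncating and using the composition series of each cohomology), the restriction follows. The same argument applied to $F^{-1}$, using the bijection $\beta_F^{-1}$, yields the opposite restriction, giving the first bullet of Definition \ref{def:Perv}. For the second bullet, the composition $\C_i \hookrightarrow D^b_{\C_i}(\C) \xrightarrow{F[-p(i)]} D^b_{\D_i}(\D) \to D^b_{\D_i}(\D)/D^b_{\D_{i-1}}(\D)$ factors through $\D_i/\D_{i-1}$ by the hypothesis on cohomology, and on simples it is precisely the bijection $\beta_F$; a standard argument for abelian categories of finite length upgrades this to an equivalence $\C_i/\C_{i-1} \xrightarrow{\sim} \D_i/\D_{i-1}$.

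The main obstacle I anticipate is justifying functoriality of the assignment $M \mapsto L_2/L_1$, so that what is \emph{a priori} only a bijection on isomorphism classes of simples genuinely underlies a functor, and hence an equivalence, of quotient abelian categories. The cleanest way around this is to observe that $L_2/L_1$ is canonically isomorphic to the image of $H^{-p(i)}(F(M))$ in $\D_i/\D_{i-1}$, since Serre-localisation at $\D_{i-1}$ kills exactly the sub- and quotient-parts with composition factors in $\T_{i-1}$. Once this identification is made, functoriality is automatic from that of the cohomology $H^{-p(i)}$ followed by the exact quotient functor $\D_i \to \D_i/\D_{i-1}$, and the remainder of the argument is diagram chasing through the embeddings $\C_i/\C_{i-1} \hookrightarrow D^b_{\C_i}(\C)/D^b_{\C_{i-1}}(\C)$ of Definition \ref{def:Perv}.
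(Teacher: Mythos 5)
The paper does not actually prove this lemma; it defers to \cite[2.64]{ChuangRouquier}. Your proposal reconstructs what is essentially the standard argument from that reference: identify Serre subcategories of a finite-length abelian category with sets of simples, and translate Definition \ref{def:Perv} degree by degree through the natural $t$-structure on the Verdier quotients. Note that the lemma as stated only claims the ``if'' direction, so your forward implication, while correct, is not required.

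There is one step in your converse direction that does not work as written. You establish $F(D^b_{\C_i}(\C)) \subseteq D^b_{\D_i}(\D)$ correctly, but then claim the opposite inclusion follows from ``the same argument applied to $F^{-1}$, using the bijection $\beta_F^{-1}$.'' The hypotheses of the lemma are not symmetric in $F$ and $F^{-1}$: they constrain $H^*(F(M))$ for $M$ a simple object of $\C$, and say nothing \emph{a priori} about $H^*(F^{-1}(N))$ for $N$ a simple object of $\D$, so the same argument is not available for $F^{-1}$ until after the restriction statement has been proved. The correct repair is the generation argument you in fact use later for the heart: by induction $F(D^b_{\C_{i-1}}(\C)) \supseteq D^b_{\D_{i-1}}(\D)$, and in the quotient $D^b_{\D_i}(\D)/D^b_{\D_{i-1}}(\D)$ the hypotheses force $F(M) \cong \beta_F(M)[p(i)]$ for each $M \in \S_i - \S_{i-1}$; since $\beta_F$ is by assumption a bijection onto $\T_i - \T_{i-1}$ and these simples generate $D^b_{\D_i}(\D)/D^b_{\D_{i-1}}(\D)$ as a triangulated category, the essential image of $F|_{D^b_{\C_i}(\C)}$, which is a triangulated subcategory, must be all of $D^b_{\D_i}(\D)$. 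Only after this is established may you transport the cohomological conditions to $F^{-1}$. Your final paragraph on the functoriality of $M \mapsto L_2/L_1$ via the Serre quotient is the right observation and matches how the cited source handles the heart equivalence.
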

For proof see \cite[2.64]{ChuangRouquier}.

We put forward an important example of perverse equivalence of symmetric algebras. First we have to define:  
\begin{definition} Let $\S' \subset \S$. Given $M \in A\Mod$, $\phi_M:P_M \to M$ a projective cover. Denote by $M_{\S'}$ the largest quotient of $P_M$ by a submodule of $\ker\phi_M$ such that all composition factors of the kernel of the induced map $M_{\S'} \to M$ are in $\S'$. Similarly for $M \to I_M$ be the injective hull. Denote by $M^{\S'}$ the largest submodule of $I_M$ containing $M$ such that all composition factors of $M^{\S'}/M$ are in $\S'$. \end{definition}

In this (very important) example, we first define a one-sided tilting complex, then we concern how the simple modules are being corresponded,  Since the Serre subcategories are defined by subsets of simple objects as generators.

\begin{example}\label{elementary} Let $A$ be a symmetric algebra. Let $S$ be a simple $A$-module, $P_S$ be the projective cover of $S$. Take $\S'$ to be a subset of isomorphism classes of simple $A$-modules, We define $S_S$, a chain complex of projective $A$-modules depending on $\S'$ as follows. \begin{enumerate} \item If $S \in \S'$, we define \begin{equation*} X_S=(Q_S \xrightarrow{\alpha} P_S \to 0) \end{equation*} where $\alpha$ is a presentation of $S_{\S'}$, $Q_S$ is in degree 0. Note that this forces all composition factors of $\mathrm{head}(Q_S)$ not belong to $\S'$. \item For $S \notin \S'$, \begin{equation*} X_S=(P_S \to 0), \end{equation*} where $P_S$ is in degree 0. \end{enumerate}
Now consider \begin{equation*} X_{\I} := \bigoplus_{S \in \S'} X_S. \end{equation*} It is easy to check (c.f. \cite{Rickard2}) this is a one-sided tilting complex. Setting $B=End_{D^b(A)}(X_{\I})$ we have a functor \begin{equation*} F:D^b(A) \xrightarrow{\sim} D^b(B) \end{equation*} inducing such equivalence. Denote by $\T$ the set of simple $B$-modules. We have a bijection between $\S$ and $\T$ and have $\S'$ correspond to $\T'$, a subset of $\T$. with $F(X_S)=P_T$, the projective cover of $T$ as $B$-modules.

Now considering $\Hom_A(X_{\I}, S)$ for all $S \in \S$ we have \begin{equation*} F(S)=\begin{cases} T[-1] &\text{ if } S \in \S'  \\ T^{\T'} &\text{ otherwise} \end{cases} \qquad \text{and} \qquad F^{-1}(T)=\begin{cases} S[1] &\text{ if } T \in \T'  \\ S_{\S'} &\text{ otherwise.} \end{cases}\end{equation*} Since the set of non-isomorphic simple modules generates the derived category as triangulated category and subsets of simple modules generate Serre subcategories, we can also regard that the perverse equivalence is defined by such a map. \end{example}
\begin{remark} This example characterises elementary perverse equivalences for symmetric algebras. $F$ is perverse relative to ($0 \subset \S' \subset \S$, $0 \subset \T' \subset \T, \varepsilon:\{0 \to 1; 1 \to 0\}$). See \cite[2.71]{ChuangRouquier} (shifted by 1 globally on perversity function).\end{remark}

In order to handle compositions of perverse equivalences described in the paper, we introduce the notion of poset perverse equivalence. Mentioned by Chuang-Rouquier in \cite{ChuangRouquier} and explored by Dreyfus-Schmidt\cite{Leo}, poset perverse equivalence allows a better perspective to consider compositions of perverse equivalences. 

\begin{definition}\label{def:poset}
	Let $\C$ be an abelian category with $\S$ the non-isomorphic set of simple objects. Let $\D$ be another abelian category with $\T$ the non-isomorphic set of simple objects. A derived equivalence $F:D^b(\C)\to D^b(\D)$ is perverse relative to $(\S, \prec, \omega)$, where $\prec$ is a partial order on $\S$ and $\omega:\S \to \Z$, if and only if
	\begin{enumerate}
		\item There is a one-to-one correspondence $\beta_F :\S \to \T$.
		\item Define $S_{\prec}=\{T \in \S\mid T\prec S\}$. The composition factors of $H^r(F(S))$ are in $\beta(S_{\prec})$ for $r\neq -\omega(S)$ and there is a filtration $L_1 \subset L_2 \subset H^{-\omega(S)}(F(S))$ such that the composition factors of $L_1$ and of $H^{-\omega(S)}(F(S))/L_2$ are in $\beta(S_{\prec})$ and $L_2/L_1$ is isomorphic to $\beta (S)$.
	\end{enumerate}
	When considering a perverse equivalence $F: D^b(\C)\to D^b(\D)$ we can transfer the partial order on $\S$ to $\T$ via $\beta_F$.
\end{definition}

One obvious problem of introducing a new definition is the compatibility of two notions. It is not hard to notice that they are interchangeable, provided that the poset perverse equivalence exists.

\begin{lemma}
	A derived equivalence $E:D^b(\C) \to D^b(\D)$ that is perverse relative to $(\S, \prec, \omega)$ is also perverse relative to $(I_{\bullet}, \phi)$ for a certain filtration on simple objects $I_{\bullet}$ and perversity function $\phi$. On the other hand, given a derived equivalence $E$ perverse relative to $(I_{\bullet}, \phi)$, $E$ is also perverse relative to $(\S, \prec, \omega)$ for certain partial order $<$ and perversity function $\omega$.
\end{lemma}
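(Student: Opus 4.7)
The plan is to treat the two directions by explicit construction, showing in each case that the same derived equivalence $E$ together with its bijection $\beta_E$ on simples satisfies the defining conditions of the other notion of perversity. The key conceptual observation is that the filtration data $I_\bullet$ is nothing but a totally ordered sequence of antichains of simples, whereas a partial order is the natural generalisation, so the two formulations encode equivalent information at different levels of granularity.

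\textbf{Poset to filtration.} Suppose $E$ is perverse relative to $(\S, \prec, \omega)$. Choose any linear extension $S^{(0)}, S^{(1)}, \dots, S^{(r)}$ of the partial order $\prec$ and set $I_i := \{S^{(0)}, \dots, S^{(i)}\}$, together with $\phi(i) := \omega(S^{(i)})$. Because the linear order extends $\prec$, we have $S^{(i)}_\prec \subseteq I_{i-1}$ for every $i$, hence $\beta_E(S^{(i)}_\prec) \subseteq \beta_E(I_{i-1})$. The two cohomological conditions in Definition~\ref{def:poset} then immediately translate into the two bullet points of Lemma~\ref{general} for $E$: outside degree $-\phi(i)$ all composition factors of $H^r(E(S^{(i)}))$ lie in $\beta_E(I_{i-1})$, while the middle layer $L_2/L_1 \cong \beta_E(S^{(i)})$ is the unique new simple appearing at step $i$, giving the trivial bijection $I_i \setminus I_{i-1} \to \beta_E(I_i) \setminus \beta_E(I_{i-1})$ on singletons.

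\textbf{Filtration to poset.} Conversely, given $E$ perverse relative to $(I_\bullet, \phi)$, define $\mathrm{lvl}(S)$ to be the smallest index $i$ with $S \in I_i$, declare $S \prec T$ iff $\mathrm{lvl}(S) < \mathrm{lvl}(T)$, and set $\omega(S) := \phi(\mathrm{lvl}(S))$. This turns the natural preorder on simples into a partial order in which simples at a common level form an antichain, and one checks $S_\prec = I_{\mathrm{lvl}(S)-1}$. The bullet points of Lemma~\ref{general} then read off as the conditions of Definition~\ref{def:poset}, once one observes that in the filtration formulation the quotient $L_2/L_1$ is always a single simple, namely $\beta_E(S)$; this is guaranteed by the bijection between $I_i \setminus I_{i-1}$ and $\beta_E(I_i) \setminus \beta_E(I_{i-1})$ built into the definition.

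The verification is essentially bookkeeping, with no serious obstacle. The one point worth flagging is that in the poset-to-filtration direction the choice of linear extension is not canonical, so different extensions yield different filtrations refining the same $E$; in the reverse direction no information is lost, since incomparability under $\prec$ corresponds exactly to lying on the same layer of the filtration.
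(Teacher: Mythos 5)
Your proof is correct and follows essentially the same route as the paper: refine $\prec$ to a total order and take the associated maximal (singleton-step) filtration with $\phi(i)=\omega(S^{(i)})$ in one direction, and in the other define $S\prec T$ by strict comparison of the first filtration layer containing each simple, with $\omega(S)=\phi$ of that layer. The extra bookkeeping you supply (e.g.\ $S_\prec = I_{\mathrm{lvl}(S)-1}$ and the translation of the cohomological conditions) is exactly the verification the paper leaves to the reader.
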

\begin{proof}
	If $E$ is a poset perverse equivalence with respect to a partial order $\prec $, refine to a total order and let $I_{\bullet}$ be the corresponding maximal filtration on $\S$. That is, for each $i$, $I_i-I_{i-1}=\{S_i\}$ for a certain $S_i$. Now define perversity function $\phi(i)=\omega(S_i)$. Conversely, if $E$ is perverse relative to $(I_{\bullet}, \phi)$, we define a partial order on $\S$: Define $S_i\prec S_j$ if and only if there exist a layer $I_k$ such that $S_i \in I_k$ and $S_j \notin I_k$. Now define $\omega(S_i)=\phi(k)$ for the only $k$ satisfying $S_i \in I_k-I_{k-1}$. One can easily check each definition makes each fulfil the other description of perverse equivalence.
\end{proof}

\begin{definition}
	We say that the datum $(\S, \prec, \omega)$ is compatible with the datum $(I_\bullet, \phi)$ if the following conditions hold \begin{enumerate} \item If $S_a \prec S_b$ then there exists an $i$ such that $S_a \notin I_i$, $S_b \in I_i$. \item If $S_a, S_b \in I_i-I_{i-1}$ then $\omega(S_a)=\omega(S_b)=p(i)$. \end{enumerate}
\end{definition}

Let $E:D^b(\C) \to D^b(\D)$ be a perverse equivalence (filtered or poset). Consider image of $E(S)$ with $S$ running through all simple objects in $\C$. We can define a partial order $\prec$ by \begin{equation*}
S \prec S' \text{ if } H^*(E(S')) \text{ contains a copy of }S \text{ in its composition factors.}
\end{equation*}This is defined as the coarsest partial order on $E$. 
We call a proposition similar to \ref{Prop:Perv} for the properties of poset perverse equivalence.

\begin{proposition}\label{Prop:PosPerv}
	Let $\C$ be an abelian category with finite composition series and a complete set of non-isomorphic simple objects $\S_{\C}$. Let $E:D^b(\C) \to D^b(\C')$ be an equivalence perverse relative to $(\S_{\C}, \prec, \omega)$. \begin{enumerate}
		\item $E^{-1}$ is perverse relative to $(\beta_E(\S_{\C}), \beta_E(\prec), -\omega\circ \beta_E^{-1})$.
		\item Let $E':D^b(\C') \to D^b(\C'')$ be perverse relative to $(\beta_E(\S_{\C}), \beta_E(\prec), \omega')$ then $E'\circ E$ is perverse relative to $(\S_{\C}, \prec, \omega+\omega')$
		\item If $\omega=0$ then $E$ restricts to an equivalence $\C \to \C'$.
	\end{enumerate}
\end{proposition}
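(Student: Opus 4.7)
The approach is to reduce Proposition \ref{Prop:PosPerv} to its filtered counterpart Proposition \ref{Prop:Perv} via the translation established in the preceding lemma. Given a poset perverse equivalence $E$ relative to $(\S_\C, \prec, \omega)$, I first refine $\prec$ to a total order, which produces a maximal filtration $I_\bullet$ on $\S_\C$ together with a perversity function $\phi$ compatible with $\omega$ in the sense of the compatibility definition. The lemma then says that $E$ is perverse relative to $(I_\bullet, \phi)$ in the filtered sense, and the bijection $\beta_E$ carries $I_\bullet$ to a filtration $\beta_E(I_\bullet)$ on the simple objects of $\C'$.

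For (1), I apply reversibility from Proposition \ref{Prop:Perv}(1) to conclude that $E^{-1}$ is perverse relative to $(\beta_E(I_\bullet), -\phi)$. Translating back through the lemma, this reads exactly as $E^{-1}$ being poset perverse relative to $(\beta_E(\S_\C), \beta_E(\prec), -\omega \circ \beta_E^{-1})$. For (3), if $\omega\equiv 0$ then any compatible $\phi$ also vanishes, and Proposition \ref{Prop:Perv}(4) directly yields the restriction $\C \to \C'$.

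The delicate part is (2). The filtered composability lemma requires a single common filtration on the intermediate category, and the two perverse equivalences $E$ and $E'$ are presented in terms of the same partial order $\beta_E(\prec)$ on $\beta_E(\S_\C)$. I therefore fix one total refinement of this poset, pull it back along $\beta_E$ to a maximal filtration on $\S_\C$, and push it forward along $\beta_{E'}$ to a maximal filtration on the simple objects of $\C''$. With these three compatible filtrations, Proposition \ref{Prop:Perv}(2) gives that $E' \circ E$ is perverse with perversity $\phi + \phi'$; the translation lemma then returns $(\S_\C, \prec, \omega + \omega')$.

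The main obstacle is verifying that the conclusion in (2) is independent of the chosen total refinement, and more basically that the poset perversity condition in Definition \ref{def:poset} is preserved under arbitrary refinements of $\prec$. This amounts to combining refineability (Proposition \ref{Prop:Perv}(3)) with the observation that the constraint on $H^r(E(S))$ only sees the downward set $S_\prec$, which is insensitive to how simple objects incomparable under $\prec$ are linearly ordered; hence any total refinement of the same poset is legitimate simultaneously for $E$ and for $E'$, and the compositional conclusion does not depend on the refinement picked.
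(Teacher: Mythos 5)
Your handling of items (1) and (3) coincides with the paper, which simply declares those proofs ``the same as Proposition \ref{Prop:Perv}''; the substance is in item (2), and there your route is genuinely different. The paper never passes through total refinements: it verifies Definition \ref{def:poset} for $E'\circ E$ directly by tracking homology. Namely, $H^*(E(S))$ consists of one copy of $\beta_E(S)$ in degree $-\omega(S)$ together with factors $R$ satisfying $\beta_E^{-1}R\prec S$; applying $E'$, perverse relative to $(\beta_E(\S_{\C}),\beta_E(\prec),\omega')$, the distinguished factor contributes $\beta_{E'}\beta_E(S)$ in degree $-\omega(S)-\omega'(\beta_E(S))$ and all remaining factors $R'$ satisfy $\beta_E^{-1}\beta_{E'}^{-1}R'\prec S$. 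Because this argument never leaves the poset language it yields the sharp conclusion immediately. Your reduction to Proposition \ref{Prop:Perv}(2) is attractive in that it outsources all the homological work, but the return trip from the filtered statement to the poset one is under-justified at exactly one point: applying the filtered composability to a single total refinement only shows that the extraneous composition factors of $H^*(E'E(S))$ lie in the image of the downset of $S$ for that \emph{total} order, which is in general strictly larger than $\beta_{E'}\beta_E(S_{\prec})$. Your observation that the perversity condition ``only sees $S_{\prec}$'' justifies the forward translation (poset implies filtered, for every linear extension), not the backward one, and refineability in Proposition \ref{Prop:Perv}(3) goes in the wrong direction to help. The fix is to run your argument for \emph{every} linear extension of $\prec$ and intersect the conclusions: since a finite poset is the intersection of its linear extensions, any simple $T$ of $\C''$ with $(\beta_{E'}\beta_E)^{-1}T\not\prec S$ and $(\beta_{E'}\beta_E)^{-1}T\neq S$ is excluded from the relevant downset in some refinement, hence cannot occur as an off-degree factor. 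With that one combinatorial step made explicit your proof is complete; it trades the paper's short direct computation for a uniform reduction to the filtered case.
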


\begin{proof}
	The proof is the same as Proposition \ref{Prop:Perv} except for item 2. Consider the homology $H^*(E(S))$ of the image of a simple object $S$ under $E$. The composition factors of $H^*(E(S))$ contain a copy of $\beta_E(S)$ in the $-\phi_I(S)^{th}$ degree and the rest of factors $R$ satisfy $\beta^{-1}_ER \prec S$. Similarly, the homology $H^*(E'E(S))$ has a copy of $\beta_{E'}\beta_E(S)$ at degree $-\omega(S)-\omega'(S)$, and all the remaining composition factors $R'$ of the homology satisfy $\beta_E^{-1}\beta_{E'}^{-1}R' \prec S$.
	Then by Definition \ref{def:poset} the derived equivalence $E:D^b(\C) \to D^b(\C'')$ we have constructed is perverse relative to $(\S, \prec, \omega+\omega')$.
\end{proof}

\section{Representations of $G$ and $H$}\label{Sect:Reps}

The general block theory of a group algebra $\FG$ and its subgroups are described by Brauer's theorems\cite{Alperin}. Here we concentrate on the full defect block(s) of $\FG$. It has a Sylow $p$-subgroup $P$ as its defect group, hence these blocks (and $\FG$-modules lying in them) correspond to blocks of $H=N_G(P)$ (and $\FH$-modules lying in them). In our case, we have 'trivial intersection' condition, that is, $P \cap gPg^{-1}$ is either $P$ or 1. When this holds, the Green correspondence reads \begin{theorem}\label{Green}(Green correspondence for trivial intersections) Let $G$ be a group and $P$ be a Sylow $p$-subgroup and let $L=N_G(P)$. Then there is a one-to-one correspondence between the isomorphism classes of non-projective $\FG$-modules $U$ and the isomorphism classes of non-projective $\F L$-modules $V$ such that \begin{equation*} U\hspace{-.4em}\downarrow_L \isom V \oplus Q\end{equation*} \begin{equation*} V\hspace{-.4em}\uparrow^G \isom U \oplus P \end{equation*} where $P$, $Q$ are projective $\FG$ and $\F L$-modules respectively. \end{theorem}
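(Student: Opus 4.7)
The plan is to deduce the theorem from the general Green correspondence theorem combined with a Mackey decomposition that is substantially tamed by the trivial intersection hypothesis. First I extract the key $p$-local consequence of T.I.: for any nontrivial $p$-subgroup $D \leq P$ one has $N_G(D) \leq L$, since $g \in N_G(D)$ gives $D \leq P \cap gPg^{-1}$, so $P \cap gPg^{-1} \neq 1$, and T.I. forces $gPg^{-1} = P$, hence $g \in N_G(P) = L$. From this I record the second observation that for any $g \notin L$ the intersection $gLg^{-1} \cap L$ has trivial Sylow $p$-subgroup: $P$ and $gPg^{-1}$ are the unique Sylow $p$-subgroups of $L$ and $gLg^{-1}$ respectively, and $P \cap gPg^{-1} = 1$ by T.I., so $gLg^{-1} \cap L$ is a $p'$-group whenever $g \notin L$.

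Starting from an indecomposable non-projective $\F L$-module $V$, I would apply the Mackey decomposition
\begin{equation*}
V\hspace{-.3em}\uparrow^{G}\hspace{-.3em}\downarrow_{L} \;\cong\; V \;\oplus\; \bigoplus_{\substack{g \in L\backslash G/L \\ g \notin L}} \bigl(({}^{g}V)\hspace{-.3em}\downarrow_{{}^{g}L \,\cap\, L}\bigr)\hspace{-.3em}\uparrow^{L}.
\end{equation*}
Each non-trivial double-coset summand is induced from a module over a $p'$-subgroup of $L$; such a module is automatically projective (since the order of the subgroup is invertible in $\F$), and induction preserves projectivity. Hence every term outside the $g=1$ summand is projective, giving $V\uparrow^{G}\hspace{-.3em}\downarrow_L \cong V \oplus (\text{projective})$. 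Since $V$ is indecomposable and non-projective, Krull--Schmidt forces a unique indecomposable non-projective summand $U$ of $V\uparrow^{G}$ whose restriction to $L$ contains $V$, establishing $V\uparrow^{G} \cong U \oplus P$ with $P$ projective.

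For the opposite direction I start with an indecomposable non-projective $\FG$-module $U$ of vertex $D$, chosen inside $P$ after $G$-conjugation, and necessarily nontrivial. Each indecomposable summand of $U\hspace{-.2em}\downarrow_L$ has vertex conjugate in $L$ to an intersection of the form $L \cap xDx^{-1}$ for some $x \in G$; if $x \notin L$ then, by the same local argument as above, this intersection has trivial Sylow $p$-subgroup, so the corresponding summand is projective. Exactly one non-projective indecomposable summand $V$ remains, and this is the desired Green correspondent, giving $U\hspace{-.2em}\downarrow_L \cong V \oplus Q$ with $Q$ projective. Checking that $U \mapsto V$ and $V \mapsto U$ are mutually inverse on isomorphism classes of non-projective indecomposables is then a matching of summands, and the statement for arbitrary non-projective modules follows by Krull--Schmidt.

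The main obstacle is the $p$-local bookkeeping that upgrades the T.I. condition on $P$ into the statement that every ``wrong'' summand appearing in the Mackey decompositions has genuinely trivial vertex rather than merely a small one. All of the real work is packaged into the two observations of the first paragraph, after which the rest of the proof reduces to Mackey plus Krull--Schmidt uniqueness.
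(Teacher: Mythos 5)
The paper offers no proof of this theorem: it is quoted as standard background (Green correspondence specialised to the trivial-intersection setting, cf.\ Alperin), so the only comparison available is with the classical argument, which is precisely the route you take. Your two $p$-local observations are correct and are indeed the whole point of the T.I.\ hypothesis: $P\trianglelefteq L$ is the unique Sylow $p$-subgroup of $L$, so any $p$-subgroup of ${}^gL\cap L$ lands in $P\cap{}^gP=1$ for $g\notin L$. The induction direction is then complete as written, modulo one standard fact you use silently: an indecomposable summand of $V\hspace{-.2em}\uparrow^G$ whose restriction to $L$ is projective is itself projective, which needs $P\leq L$ (projectivity over $\FG$ is detected on restriction to a Sylow $p$-subgroup). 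That is worth one sentence.

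There is, however, a genuine gap in the restriction direction. From the vertex analysis you correctly conclude that every indecomposable summand of $U\hspace{-.2em}\downarrow_L$ with vertex $L\cap xDx^{-1}$ for $x\notin L$ is projective; but for $x\in L$ the intersection is $xDx^{-1}$, conjugate to $D$ in $L$, and nothing you have said rules out several non-projective summands with that vertex. So ``exactly one non-projective indecomposable summand $V$ remains'' does not follow from what precedes it. The standard repair feeds the first direction back in: since $P\leq L$, $U$ is relatively $L$-projective, so $U$ is a summand of $U\hspace{-.2em}\downarrow_L\hspace{-.2em}\uparrow^G$ and hence of $V\hspace{-.2em}\uparrow^G$ for some non-projective indecomposable summand $V$ of $U\hspace{-.2em}\downarrow_L$; then $U\hspace{-.2em}\downarrow_L$ is a summand of $V\hspace{-.2em}\uparrow^G\hspace{-.3em}\downarrow_L\isom V\oplus(\text{projective})$, which by Krull--Schmidt forces $V$ to be the unique non-projective summand of $U\hspace{-.2em}\downarrow_L$, with multiplicity one. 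This same computation is what makes the two assignments mutually inverse, so the final ``matching of summands'' should be replaced by this explicit step rather than left as an afterthought.
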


This leads to
\begin{equation*} \PHom_{\FG}(M,N)\isom\PHom_{\FH}(M\hspace{-.4em}\downarrow_H,N\hspace{-.4em}\downarrow_H). \end{equation*}
Hence, $\FG\Pmod$, $\B\Pmod$ and $\FH\Pmod$ are stably equivalent. This equivalence, since given by induction and restriction functors, are of Morita type. 

\subsection{$\FG$-modules and $\FH$-modules}
We start by describing the simple $\FG$-modules. Let $V$ be the natural two dimensional representation of $\FG$. Denote by $V^i$ the $i^{th}$ symmetric power of $V$. (Note it is NOT the tensor product of $i$ copies of $V$.)  Define the $j^{th}$ Frobenius twist on $V^i$, $\sigma^j(V^i)$, via the Frobenius automorphism on $G$. $\sigma^j(V^i)$ is the $G$-module whose underlying space is $V^i$ but for $g \in G$, the action is defined as $g(v')=\sigma^j(g)(v)$ for all $v$. 

Steinberg tensor product theorem describes all the simple modules of $SL_2(q)$.
\begin{definition} For $0 \leq a \leq q-1$ define 
\begin{equation}\label{STPT} S_a=\bigotimes_{i=0}^{n-1}\sigma^i(V^{a_i}). \end{equation}
\end{definition}
\begin{theorem}
	$S_a$ are simple for $0 \leq a \leq q-1$, mutually non-isomorphic. They form a complete set of mutually non-isomorphic simple $\FG$-modules.
\end{theorem}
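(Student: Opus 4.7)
The plan is to establish simplicity, mutual non-isomorphism, and completeness of the family $\{S_a : 0 \leq a \leq q-1\}$ in turn, essentially by specialising Steinberg's tensor product theorem to $SL_2$ and matching the count with the number of $p$-regular conjugacy classes. First I would handle the restricted building blocks $V^i$ for $0 \leq i \leq p-1$, then tensor them up with Frobenius twists, and finally count.

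For simplicity of $V^i$ when $0 \leq i \leq p-1$, I would use the basis of monomials $x^i, x^{i-1}y, \ldots, y^i$. The diagonal torus acts with $i+1$ distinct one-dimensional weight spaces, so any submodule decomposes as a sum of these. The upper triangular unipotent element acts on $x^{i-k}y^k$ by a binomial formula whose coefficients $\binom{k}{j}$ are all nonzero mod $p$ because $k \leq i \leq p-1$; this produces a single Jordan block of length $i+1$ connecting all weight spaces, forcing the only submodules to be $0$ and $V^i$.

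For simplicity of the tensor product $S_a$, I would pass to the algebraic group $\mathbf{SL}_2(\overline{\F})$. Its simple rational modules $L(\lambda)$ for $\lambda \geq 0$ satisfy $L(\lambda) \cong \bigotimes_i \sigma^i(L(\lambda_i))$ where $\lambda=\sum \lambda_i p^i$ is the base-$p$ expansion, and $L(\lambda_i) \cong V^{\lambda_i}$ by the previous step. Restricting $L(a)$ to the finite subgroup $SL_2(q)$ preserves simplicity when $0 \leq a \leq q-1$ (the standard fact that restricted weights strictly below $q$ yield simple modules for the finite group), which gives that $S_a$ is a simple $\FG$-module. Mutual non-isomorphism then follows from the highest weight: $S_a$ carries a one-dimensional weight space of weight $a$ for the diagonal torus of $G$, and distinct $a \in \{0,\ldots,q-1\}$ give distinct weights.

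Finally, for completeness, I would count $p$-regular conjugacy classes of $SL_2(q)$ directly: the enumeration of semisimple classes (scalar, split, non-split, and unipotent being the only non-semisimple ones) produces exactly $q$ such classes, and by Brauer's theorem this equals the number of isomorphism classes of simple $\FG$-modules, matching the list we have produced. The main subtlety is the simplicity of $S_a$ for general $a$: a direct argument inside $\FG\Mod$ tends to circle back to Steinberg's theorem itself, so the cleanest route is to pass to the algebraic group and use the descent of simplicity to the finite subgroup as above rather than try to recognise the tensor product structure purely on the finite side.
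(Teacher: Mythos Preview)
The paper does not give a proof of this theorem at all: it is stated as a classical fact (Steinberg's tensor product theorem specialised to $SL_2$), with the definition of $S_a$ even labelled \texttt{STPT}, and the text immediately moves on. So there is nothing to compare your argument to on the paper's side; you have supplied a proof where the author simply quotes the literature.

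Your outline is essentially the standard proof and is sound. One small gap worth patching: in your non-isomorphism step you assert that distinct $a\in\{0,\dots,q-1\}$ give distinct highest weights for the diagonal torus of $G$. But that torus is cyclic of order $q-1$, so its characters are only defined modulo $q-1$, and in particular $a=0$ and $a=q-1$ yield the same torus weight. The fix is easy (e.g.\ compare dimensions, since $\dim S_0=1$ while $\dim S_{q-1}=q$, or argue with the full weight multiplicities, or keep the highest weight in the algebraic torus before restriction), but as written the sentence ``distinct $a$ give distinct weights'' is not literally correct over the finite group. Everything else---the Jordan block argument for simplicity of $V^i$ with $i\le p-1$, the appeal to the algebraic group for simplicity of the tensor product and its restriction, and the count of $p$-regular classes---is fine.
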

Furthermore, $S_{q-1}$ is (simple and) projective. In block theory of $SL_2(q)$, $p^n$ copies of $S_{q-1}$ forms a block of defect zero which we called the Steinberg block in $SL_2(q)$. The block itself is not very much of our concern since its Brauer correspondent is itself. However, we will use the fact that $S_{q-1}$ is projective in some (strange) manner later.

For $p=2$, all the remaining simple modules fall into one full defect block, the principal block $\B_0$. For odd primes they fall into two distinct full defect blocks. The principal block, $\B_0$, has all evenly numbered simple modules $S_0$, $S_2$,..., $S_{q-2}$ and the non-principal block $\B_1$ consists of all oddly numbered simple modules $S_1$, $S_3$,..., $S_{q-2}$. To unify the description disregarding parity of primes, 
\begin{definition} The direct sum the full defect blocks of $\FG$ is denoted $\B$. Denote the complete set of non-isomorphic simple $\B$-modules by \begin{equation*}
	\S=\{S_a\mid 0 \leq a \leq q-2\}
	\end{equation*} \end{definition}
\begin{remark} $\B$ is the algebra such that $\FG= St \oplus \B$, where $St$ is the Steinberg block. \end{remark}

\begin{definition}
For an integer $a$, $0\leq a \leq q-1$, denote $M_a$ the $\FH$-module $S_a\hspace{-.4em}\downarrow_H$ given by restricting the corresponding simple $\FG$-module. 
\end{definition}

Now we discuss the representation theory of group $H$, all of whose block(s) is(are) the local Brauer correspondent(s) of the full defect block(s) of $\FG$. $H$ as a group is $C_q^n \rtimes C_{q-1}$. It is quite easy to obtain its simple modules - they are all 1-dimensional. Let $\alpha$ be a generator of $\F_q$. Define $U_i$ to be the 1-dimensional $\FH$-module on which $\begin{pmatrix}
\alpha^{-1} & *\\
0 & \alpha
\end{pmatrix}$ acts on $U_i$ by multiplying every vector by $\alpha^i$.
It is obvious that $U_i$ is isomorphic to $U_j$ if and only if $i\equiv j \pmod{q-1}$. Every simple $\FH$-module arises in this way.

\begin{remark} Most literature (except Holloway in the list of referenced authors) define the simple $\FH$-modules $U_i$ by having the matrix $\begin{pmatrix}
\alpha^{-1} & *\\
0 & \alpha
\end{pmatrix}$ acts on $U_i$ by multiplying $\alpha^{-i}$ instead. In other words the conventional $U_i$ defined in other literature will be our $U_{-i}$ instead. We use Holloway's convention to avoid negative signs arise later. \end{remark}

The Frobenius automorphism of $G$ restricts to an automorphism to $H$. So it twists also $\FH$-modules. Simple calculation shows that $\sigma(U_i)\isom U_{pi}$.

The block structure of $\FH$ is similar to $\B$. Concretely, for $p=2$, the whole group algebra $\FH$ forms a single block. For odd primes, $\FH$ decomposes into two blocks, the principal block containing evenly numbered simple modules $U_0$, $U_2$,..., $U_{q-3}$ and non-principal block containing oddly numbered simple modules $U_1$, $U_3$,..., $U_{q-2}$. 

Consider $U_i \tensor -$ as a functor from $\FH\Mod$ to itself. Since $U_i$ is one-dimensional, the following conclusion can be easily checked: \begin{enumerate}
\item $U_i \tensor U_j \isom U_{i+j}$;
\item $U_i \tensor - $ induces a Morita self-equivalence with inverse functor $U_{-i}\tensor - $;
\item The endofunctor on $\FH\Pmod$ induced by $U_i \tensor -$ is exact. 
\end{enumerate}

\begin{notation} We omit the tensor product symbol from $U_i \tensor_{\F} -$ for convenience. For the rest of this paper we almost always treat $U_i$ as a functor.  
\end{notation} 

The ultimate aim is to explore extensions in $\FG\Mod$. A natural choice is to look at distinguished triangles in $D^b(\FG)$. However, it turns out to be extremely difficult. In fact, the piece of information in question is too cryptic in $\FG\Pmod$. Luckily in $\FH\Pmod$, we have the nice series of functors $U_i \tensor - $ to aid calculations which is enough for our job. In order to do so we consider the restriction of simple $\FG$-modules to $\FH$-modules. Using Steinberg tensor product theorem (c.f. \ref{STPT}) and the fact that the restriction of $V^i$ is a uniserial $\FH$-module with 1-dimension components for $0 \leq i \leq n-1$ (see Chapter 5 of \cite{Holloway} or \cite{Chuang} for its proof), it is not hard to obtain the structure of these restrictions. They are indecomposable modules with a 'hypercuboid shape', see either \cite{Holloway} or the Appendix for details.

The remaining sections in this chapter consists of the needed work of tailoring the intrinsic structure of the related categories into useful lemmas and corollaries. These calculation of possible extensions for all possible cases are highly combinatorial, but these cases can be simplified into two lemmas, with the extensions represented by certain distinguished triangles in $\FH\Pmod$. 

\subsection{Triangles in the stable module categories of blocks}

In this section, we fix $n$ to be greater than 1. All tensor products are over $\F$ unless otherwise stated.
\begin{definition}\label{Aux} Consider the $i^{th}$ digit of the base-$p$ representation $a_i$ with $0 \leq a_i \leq p-2$.\begin{enumerate} \item Define $a'_i$ to be $p-2-a_i$; \item For an integer $a$ with  $0 \leq a_i \leq p-2$ for some $i$, \begin{itemize} \item define $a(i')=(a_0,...,a_{i-1},a'_i,a_{i+1},...,a_{n-1})$ to be the number acquired by replacing the digit $a_i$ by $a'_i$. \item $a(\overline{i})=(a_0,...,a_{i-1},p-1,a_{i+1},...,a_{n-1})$ be the number acquired by replacing the digit $a_i$ by $p-1$. \end{itemize} \end{enumerate} \end{definition}
\begin{remark} Note that $a''_i=a_i$. Using this we will define a pairing (later) between integers $0 \leq a \leq q-2$ which we will often use later. \end{remark}

We are going to build up some lemmas, culminating to a general description of certain distinguished triangles of $\FH\Pmod$ for further calculation.
\begin{lemma}\label{SES}
Let $i$ be an integer with $0 \leq i \leq n-1$. Let $l$ be any integer and $t$ be an integer with $0\leq t \leq p-2$. Let $j=l-p^i(p+1+t)$ and $\breve{j}=l-p^i(p-1-t)$. Then any non-zero $\FH$-homomorphism from $U_jV^{p-1}$ to  $U_{\breve{j}}V^{p-1}$ has a cokernel isomorphic to $U_lV^t_i$.
\end{lemma}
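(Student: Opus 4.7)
The plan is to translate the statement into the explicit uniserial structure of the modules and then identify the image and cokernel of any non-zero homomorphism via composition-factor matching. By the description of $V^{p-1}|_H$ (and its Frobenius twists) recalled from the section preamble and Holloway, $\sigma^i(V^{p-1})|_H$ is uniserial with composition factors $U_{p^i(p-1)}, U_{p^i(p-3)}, \ldots, U_{-p^i(p-1)}$ from head to socle. Tensoring with the one-dimensional $U_j$ or $U_{\breve{j}}$ only shifts weights, so both $U_jV^{p-1}$ and $U_{\breve{j}}V^{p-1}$ are uniserial of length $p$ with pairwise distinct $1$-dimensional composition factors (distinctness follows from $2p^i \le 2p^{n-1} < q-1$).

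Next I would exploit uniseriality to pin down the image. In a uniserial module with distinct composition factors, the submodules (resp.\ quotients) form a chain in which each is determined by its length. For a non-zero $\FH$-homomorphism $f : U_j V^{p-1} \to U_{\breve{j}} V^{p-1}$, $\im f$ is simultaneously a quotient of the domain (so has head $U_{j+p^i(p-1)}$) and a submodule of the codomain (so its socle coincides with that of $U_{\breve{j}}V^{p-1}$). Using the defining identity $\breve{j}-j = 2p^i(1+t)$ one checks that $j+p^i(p-1) \equiv \breve{j}+p^i(p-1-2(t+1)) \pmod{q-1}$, so the head of the domain coincides with the $(t+1)$st composition factor (from the top) of the codomain. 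Hence $\im f$ is forced to be the unique uniserial submodule of $U_{\breve{j}}V^{p-1}$ of length $p-1-t$ having that composition factor at its head.

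The cokernel is then $U_{\breve{j}}V^{p-1}/\im f$, which is uniserial of length $t+1$ with composition factors $U_{\breve{j}+p^i(p-1-2r)}$ for $r = 0, \ldots, t$. Substituting $\breve{j} = l - p^i(p-1-t)$ these simplify to weights $l + p^i(t-2r)$, namely $U_{l+p^i t}, U_{l+p^i(t-2)}, \ldots, U_{l-p^i t}$, which are precisely the head-to-socle composition factors of $U_l \sigma^i(V^t)|_H = U_l V^t_i$. Since a uniserial $\FH$-module with distinct $1$-dimensional composition factors in the present weight range is determined up to isomorphism by its ordered list of factors, the cokernel is isomorphic to $U_l V^t_i$.

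The main delicate point is confirming the image cannot be shorter: the uniserial structure of the domain forces any non-zero image to carry the head $U_{j+p^i(p-1)}$, and the chain-of-submodules structure of the codomain then forces the image to be the unique submodule with that simple at its top, which has length $p-1-t$. Once this is established, the rest is just weight arithmetic modulo $q-1$ combined with the known uniserial structure of $V^{p-1}|_H$ (and its Frobenius twist). I would also double-check the edge case $t = p-2$, where the head of the domain lands on the socle of the codomain, so that the image is simple and the cokernel has length $p-1$, still matching $U_l V^{p-2}_i$.
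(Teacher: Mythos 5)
Your argument is correct in substance, but it takes a different route from the paper: the paper does not prove Lemma \ref{SES} at all, deferring entirely to Lemma 4 of Chuang (and through it to Lemmas 2.1 and 2.2 of Carlson), where the statement is established by explicit computation with bases of the symmetric powers. What you supply is a self-contained structural proof from uniseriality: since the domain is uniserial, every non-zero image carries the head $U_{j+p^i(p-1)}$, and since the codomain is uniserial with distinct factors, the image is the unique submodule with that head, so the cokernel is forced. This is a clean replacement for the citation, and the weight arithmetic checks out (the head of the domain sits at $0$-indexed position $t+1$ of the codomain, giving an image of length $p-1-t$ and a cokernel whose factors are $U_{l+p^i(t-2r)}$, $r=0,\dots,t$). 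Three points need tightening. First, your phrase ``the $(t+1)$st composition factor from the top'' is only consistent with the claimed image length $p-1-t$ if you index from $0$; say so explicitly. Second, the distinctness of composition factors does not follow from $2p^{n-1}<q-1$ (which is false for $p=2$, and in any case only covers adjacent factors); the correct argument is that coincidence would force $p^n-1\mid 2dp^i$ for some $1\le d\le p-1$, which is impossible since $\gcd(p,p^n-1)=1$ and $0<2d<p^n-1$. Third, the final identification of the cokernel with $U_lV^t_i$ rests on the claim that a uniserial $\FH$-module is determined by its ordered list of distinct one-dimensional factors; this is not automatic for arbitrary algebras and should be justified here, e.g.\ by noting that $\Ext^1_{\FH}(U_a,U_b)$ is at most one-dimensional and non-zero exactly when $b-a\equiv -2p^i$ for some $i$, or by locating both modules as segments in the $i$-th direction of Holloway's hypercuboid description of the projective indecomposables. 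With these repairs the proof stands on its own, which is arguably preferable to the paper's bare citation.
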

\begin{proof} See Lemma 4 of \cite{Chuang} (further referenced to Lemma 2.1 and 2.2 of \cite{Carlson}). Note the proof from \cite{Chuang} can be directly adapted for arbitrary $i$. Also note $U_i$ in \cite{Chuang} becomes $U_{-i}$ here. \end{proof}

\begin{lemma} We have non-split short exact sequences of the following $\FH$-modules:
\begin{equation*} 0 \to U_{-p^{i+1}}V^{b'_i}_i \to U_{-p^i(p-1-b_i)}V^{p-1}_i \to V^{b_i}_i \to 0 \end{equation*}
\begin{equation}\label{iFill} 0 \to U_{-p^{i+1}}M_{b(i')} \to U_{-p^i(p-1-b_i)}M_{b(\overline{i})} \to M_b \to 0. \end{equation} \end{lemma}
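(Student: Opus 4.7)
The plan is to establish the first sequence directly from the uniserial structure of $V^{p-1}_i\downarrow_H$, and then obtain the second sequence by applying an exact tensor functor to the first. For the first sequence, I recall that $V^{p-1}\downarrow_H$ is uniserial with one-dimensional composition factors $U_{p-1},U_{p-3},\ldots,U_{-(p-1)}$ from head to socle. The Frobenius twist $\sigma^i$ scales these indices by $p^i$, and tensoring with the one-dimensional $U_{-p^i(p-1-b_i)}$ shifts each index by $-p^i(p-1-b_i)$, so the middle term $U_{-p^i(p-1-b_i)}V^{p-1}_i$ is uniserial of length $p$ with composition factors $U_{p^i(b_i-2j)}$ for $j=0,1,\ldots,p-1$. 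By uniseriality there is a unique submodule of codimension $b_i+1$; a direct calculation shows its composition factors $U_{p^i(-b_i-2-2k)}$ for $k=0,\ldots,b'_i$ coincide with those of $U_{-p^{i+1}}V^{b'_i}_i$, yielding the left-hand identification. The corresponding quotient has the remaining top $b_i+1$ factors, which are precisely those of $V^{b_i}_i\downarrow_H$. Alternatively one can invoke Lemma \ref{SES} with $l=0$ and $t=b_i$ and read off the sequence as the image–cokernel decomposition of the map produced there.

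For the second sequence, Steinberg's tensor product theorem gives
\[ S_{b(\overline{i})}=T\otimes V^{p-1}_i,\quad S_{b(i')}=T\otimes V^{b'_i}_i,\quad S_b=T\otimes V^{b_i}_i, \]
where $T=\bigotimes_{k\neq i}\sigma^k(V^{b_k})$. Restricting to $H$ realises the modules $M_{b(\overline{i})},M_{b(i')},M_b$ as the tensor products of the corresponding restrictions. The functor $(T\downarrow_H)\otimes_{\F}-$ is exact (tensor over a field), and the one-dimensional modules $U_c$ commute past tensor products, so applying this functor to the first short exact sequence and rearranging gives the second sequence.

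Finally, both sequences are non-split because their middle term is indecomposable. For the first, $U_{-p^i(p-1-b_i)}V^{p-1}_i$ is uniserial and hence indecomposable. For the second, $M_{b(\overline{i})}$ is indecomposable by the `hypercuboid' structure of restrictions of simple $\FG$-modules discussed in the Appendix (see also \cite{Holloway}), and tensoring with the one-dimensional $U_{-p^i(p-1-b_i)}$ preserves indecomposability. The main obstacle I anticipate is the careful bookkeeping of composition-factor indices under the Frobenius scaling by $p^i$ and additive shifts by $U_c$, in order to cleanly identify both the submodule as $U_{-p^{i+1}}V^{b'_i}_i$ and the quotient as $V^{b_i}_i$; once this arithmetic is done, the second sequence and the non-splitness statements follow with little additional work.
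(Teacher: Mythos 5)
Your proof is correct, and the second half (tensoring the first sequence with $\bigotimes_{k\neq i}\sigma^k(V^{b_k})\!\downarrow_H$ and invoking the Steinberg tensor product theorem) is exactly the paper's argument. For the first sequence you diverge slightly: the paper takes the periodic resolution of $V^{b_i}_i$ by modules $U_*V^{p-1}_i$ and applies Lemma \ref{SES} twice, once to identify the cokernel $V^{b_i}_i$ and once (to the preceding map, with $t=b'_i$, $l=-p^{i+1}$) to identify the kernel as $U_{-p^{i+1}}V^{b'_i}_i$; your primary route instead reads the sequence off the uniserial structure of $U_{-p^i(p-1-b_i)}V^{p-1}_i$ by matching composition factors, and your index bookkeeping is correct. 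The one soft spot is that two uniserial modules with the same ordered composition factors need not be isomorphic in general, so the identification of the codimension-$(b_i+1)$ submodule with $U_{-p^{i+1}}V^{b'_i}_i$ really rests on Carlson's description of these modules --- which is precisely what your fallback application of Lemma \ref{SES} (or the paper's second application of it) supplies, so nothing is lost. A point in your favour: you actually justify non-splitness via indecomposability of the middle terms (uniserial in the first case, the hypercuboid $M_{b(\overline{i})}$ in the second), whereas the paper asserts it without comment.
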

Recall that $M_a$ is the restriction of $S_a$ from $G$-modules to $H$-modules.
\begin{proof}
The two short exact sequences can be obtained similarly to the proof of Lemma 6 of \cite{Chuang}: Using the previous lemma we have an exact sequence
\begin{equation*}U_jV^{p-1}_i \to U_{\breve{j}}V^{p-1}_i \to U_{p^i(p-1-b_i)}V^{p-1}_i \to V^{b_i}_i \to 0 \end{equation*}
where $j=-p^i(p-1-b_i+2p)$ and $\breve{j}=-p^i(p+1+b_i)$. Using the previous lemma, the first homomorphism has a cokernel isomorphic to $U_{-p^{i+1}}V^{b'_i}$ which gives the first sequence. \\
The second sequence is obtained by tensoring the first sequence at each term with $V^{b_0}_0,...,V^{b_{n-1}}_{n-1}$ except $V^{b_i}_i$. Then using (\ref{STPT}) and restriction to see this is the desired result. \end{proof}

\begin{lemma}\label{TRI} Let $b_i$ be an integer with $0 \leq b_i \leq p-2$ with $0 \leq i \leq n-1$.  Then we have the following triangle in $\FH\Pmod$:
\begin{equation}\label{ExtMb}U_{p^i(1+b'_i)} \Omega M_{b(\overline{i})} \to U_{p^{i+1}}\Omega M_{b(i')} \to M_b \leadsto \end{equation}
\end{lemma}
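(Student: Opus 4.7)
The plan is to apply the standard $\F$-linear duality $D := \Hom_{\F}(-,\F)$ to the short exact sequence \eqref{iFill} and then rotate twice backwards in the triangulated category $\FH\Pmod$. Both the flipped signs on the $U_j$ indices and the appearance of $\Omega$ inside the first two terms of the target triangle (compared with the triangle that \eqref{iFill} immediately produces, in which $M_b$ is already the third term but no $\Omega$ appears and the $U$-indices are negated) are accounted for by this dualise-then-rotate manoeuvre.

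First I would check how $D$ acts on the relevant pieces. For a one-dimensional $\FH$-module, $D(U_j) \isom U_{-j}$ directly from the definition of the action. For the restrictions $M_a$, self-duality follows from the self-duality of $V$ as an $\FG$-module (so each symmetric power $V^{a_i}$ is self-dual; Frobenius twists preserve this; and Steinberg's tensor product theorem \eqref{STPT} gives $S_a \isom S_a^*$) combined with the fact that restriction commutes with $D$. Since $D$ is monoidal, this yields $D(U_{-j} M_a) \isom U_j M_a$. Applying $D$ to \eqref{iFill} therefore produces the dual short exact sequence
\begin{equation*}
0 \to M_b \to U_{p^i(p-1-b_i)} M_{b(\overline{i})} \to U_{p^{i+1}} M_{b(i')} \to 0.
\end{equation*}

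This sequence produces a distinguished triangle $M_b \to U_{p^i(p-1-b_i)} M_{b(\overline{i})} \to U_{p^{i+1}} M_{b(i')} \leadsto$ in $\FH\Pmod$. Rotating backwards once gives $U_{p^{i+1}} \Omega M_{b(i')} \to M_b \to U_{p^i(p-1-b_i)} M_{b(\overline{i})} \leadsto$, and rotating backwards once more yields
\begin{equation*}
U_{p^i(p-1-b_i)} \Omega M_{b(\overline{i})} \to U_{p^{i+1}} \Omega M_{b(i')} \to M_b \leadsto,
\end{equation*}
where I use that $\Omega$ commutes with $U_j \tensor -$, as the latter is exact and preserves projective covers (both are immediate from $U_j$ being one-dimensional). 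After substituting $p-1-b_i = 1+b'_i$, this is exactly the triangle \eqref{ExtMb}. The only non-formal ingredient is the self-duality of the simple modules $M_a$, which reduces to standard facts about $SL_2$-representations, so I do not anticipate any serious difficulties.
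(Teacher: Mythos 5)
Your proof is correct, but it reaches the triangle by a genuinely different route from the paper. The paper's proof takes the triangle induced by (\ref{iFill}), tensors it with the invertible module $U_{p^{i+1}}$, and then substitutes $b_i \leftrightarrow b'_i$ (i.e.\ instantiates (\ref{iFill}) at the partner digit, which fixes $b(\overline{i})$ and swaps $b$ with $b(i')$); this lands on the triangle $M_b \to U_{p^i(1+b'_i)}M_{b(\overline{i})} \to U_{p^{i+1}}M_{b(i')} \leadsto$, and two leftward rotations finish the job. You instead apply $\F$-linear duality to (\ref{iFill}); since $p^i(p-1-b_i)=p^i(1+b'_i)$, your dual sequence yields exactly the same intermediate triangle, and your two backward rotations coincide with the paper's. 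The trade-off: the paper's argument is purely formal given (\ref{iFill}) (twist by a one-dimensional module plus a change of parameter), whereas yours imports the self-duality of the simple $SL_2(q)$-modules $S_a$ and hence of their restrictions $M_a$. That input is standard and your justification of it (self-duality of $V$, of the restricted symmetric powers $V^{a_i}$ for $a_i\le p-1$, stability under Frobenius twist and tensor product, and compatibility of duality with restriction) is sound, as are the auxiliary facts $D(U_j)\isom U_{-j}$, monoidality of $D$, and the commutation of $\Omega$ with $U_j\tensor-$. So the argument goes through; it is simply less economical than the paper's relabelling trick, at the price of an extra (true) representation-theoretic fact, and in exchange it makes transparent why the $U$-indices change sign.
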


\begin{proof}
The short exact sequence (\ref{iFill}) in $\FH\Mod$ induces a triangle 
\begin{equation*}  U_{-p^{i+1}}M_{b(i')} \to U_{-p^i(p-1-b_i)}M_{b(\overline{i})} \to M_b \leadsto \end{equation*}
in $\FH\Pmod$.
To obtain the stated triangle from the short exact sequence, we take it as a triangle in the stable module category and perform the following steps:\begin{enumerate} 
\item Tensor throughout the triangle obtained by $U_{p^{i+1}}$,
\item Relabel $b_i$ by $b'_i$ (and vice versa).
\item Rotate the triangle (c.f. definition 1.63(c)) two places to the left. (Put the rightmost term to the leftmost and shift by $\Sigma^{-1}$. In stable $\FH$-module category $\Sigma^{-1}$ is represented by applying $\Omega$; perform this twice.).
\end{enumerate} \end{proof}
\begin{remark} This triangle does lie in a particular block of $\FH$, depending on the parity of $b$. When $M_{b(\overline{i})}$ is the Steinberg module (i.e. $b(\overline{i})=q-1$), we regarded that module as zero module. That is partly justified by the fact the Steinberg module restricts to a projective module. \end{remark}

In particular we have:
\begin{lemma}\label{PerProj} Fix $i$ to be an integer with $0 \leq i \leq n-1$. Let $b_i$ be integers such that $0 \leq b_i \leq p-2$. $m$ is an integer. Then 
$\begin{cases}
\Omega^m M_{(p-1,...,b_i,...,p-1)}\isom U_{-mp^{i+1}}M_{(p-1,...,b'_i,...,p-1)}&\text{ if } m \text{ is odd.} \\ 
\Omega^m M_{(p-1,...,b_i,...,p-1)}\isom U_{-mp^{i+1}}M_{(p-1,...,b_i,...,p-1)} &\text{ if } m \text{ is even.} 
\end{cases}$
\end{lemma}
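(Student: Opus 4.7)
The plan is to apply Lemma \ref{TRI} to the very special multi-digit $b=(p-1,\dots,b_i,\dots,p-1)$ and exploit the fact that one of the three terms in the triangle is a restriction of the Steinberg module, which vanishes in $\FH\Pmod$. Writing $b(\overline{i})=(p-1,\dots,p-1,\dots,p-1)=q-1$, we see that $M_{b(\overline{i})}=S_{q-1}\hspace{-.4em}\downarrow_H$ is projective as an $\FH$-module (since $S_{q-1}$ is projective as a $\B$-summand and restriction preserves projectivity on a $p$-Sylow), so $U_{p^i(1+b'_i)}\Omega M_{b(\overline{i})}=0$ in $\FH\Pmod$. Thus the triangle (\ref{ExtMb}) degenerates into an isomorphism
\begin{equation*}
M_b \isom U_{p^{i+1}}\,\Omega M_{b(i')}
\end{equation*}
in $\FH\Pmod$.

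Next I would use that $U_j$, being induced by tensoring with a one-dimensional $\FH$-module, is an exact autoequivalence, so it commutes with $\Omega$ and is inverted by $U_{-j}$. Applying $U_{-p^{i+1}}\Omega^{-1}(-)$ to the displayed isomorphism and swapping the roles of $b$ and $b(i')$ (which is legitimate because $(b(i'))(i')=b$ by $b''_i=b_i$), we obtain the base case
\begin{equation*}
\Omega M_b \isom U_{-p^{i+1}}\,M_{b(i')}.
\end{equation*}

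I would then induct on $|m|$. If $m$ is odd and the formula holds, one step gives
\begin{equation*}
\Omega^{m+1}M_b \isom \Omega\bigl(U_{-mp^{i+1}}M_{b(i')}\bigr)\isom U_{-mp^{i+1}}\Omega M_{b(i')} \isom U_{-(m+1)p^{i+1}}M_{(b(i'))(i')} = U_{-(m+1)p^{i+1}}M_b,
\end{equation*}
and similarly in the even-to-odd case, using once more that $U_{-p^{i+1}}$ is $\Omega$-equivariant and that $(b(i'))(i')=b$. For negative $m$ one runs the same argument with $\Omega^{-1}$ in place of $\Omega$. The only thing requiring any care is justifying the vanishing of $U_{p^i(1+b'_i)}\Omega M_{b(\overline{i})}$; once one is content to invoke the remark after Lemma \ref{TRI} (that the Steinberg module is to be treated as zero here, because it restricts projectively), the rest is bookkeeping on the indices $b$ and $b(i')$.
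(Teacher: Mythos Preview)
Your argument is correct and follows essentially the same route as the paper: both exploit that for $b=(p-1,\dots,b_i,\dots,p-1)$ the completion $b(\overline{i})=q-1$ gives a projective $\FH$-module, and then iterate the resulting base isomorphism. The only difference is that the paper applies the short exact sequence~(\ref{iFill}) directly---there the projective sits in the \emph{middle}, so $\Omega M_b \isom U_{-p^{i+1}}M_{b(i')}$ drops out immediately---whereas you go via the rotated triangle~(\ref{ExtMb}), obtain $M_b\isom U_{p^{i+1}}\Omega M_{b(i')}$, and then unwind by swapping $b\leftrightarrow b(i')$. One small wording slip: $S_{q-1}$ is not a $\B$-module (it lives in the Steinberg block, not in $\B$); the correct justification is simply that $S_{q-1}$ is projective as an $\FG$-module, hence restricts to a projective $\FH$-module.
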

\begin{proof}
We only need to prove $\Omega M_{(p-1,...,b_i,...,p-1)}=U_{-p^{i+1}}M_{(p-1,...,b'_i,...,p-1)}$, since both cases are only the $m^{th}$ iteration of it. Using (\ref{iFill}) with all digits as $p-1$ except $b_i$ we have 
\begin{equation*} 0 \to U_{-p^{i+1}}M_{(p-1,...,b'_i,...,p-1)} \to U_{-p^i(p-1-b_i)}M_{p^n-1} \to M_{(p-1,...,b_i,...,p-1)} \to 0. \end{equation*}
Since the middle term is projective it is regarded as zero in stable module category. Hence the first term is isomorphic to the Heller translate of the last term by the axioms, which is exactly the desired equation.
\end{proof}
\begin{remark} The highlight in the preceding lemma is the $-mp^{i+1}$ subscript of $U$ regardless of whether subscript of $M$ has $b_i$ or $b'_i$ as its $i^{th}$ digit. \end{remark}

\subsection{Extension lemmas}

We have to determine the possible extension of some $\FG$-modules. This is for us to work through perverse equivalence later. To achieve this, we transfer $\FG$-modules to $\FH$-modules using the restriction functor. Since \begin{equation*}\label{ExtHom}\Ext^1_{\FG}(M, N)=\PHom_{\FG}(\Omega M, N)=\PHom_{\FH}(\Omega M\hspace{-.4em}\downarrow_H, N\hspace{-.4em}\downarrow_H), \end{equation*} to find out the necessary extension needed in our construction later we introduce two lemmas. First, we follow the route in \cite{Chuang} and utilise Carlson's calculations on $\Ext$ groups of simple $\FG$-modules \cite{Carlson}, then we adapt the result to $\FH\Pmod$ using the restriction functor and generalises it. We end up with a refinement similar to the one in \cite[lemma 6]{Chuang}. Second, we need another piece of information which turns out to be a direct calculation of stable homomorphism group ($\PHom$) of some modules, which generalises a lemma from Holloway \cite{Holloway}.

\begin{lemma}\label{Carlson}
For $0 \leq i \leq n-1$, suppose $b_i, c_i$ ranges from 0 to $p-1$ and $j,\breve{j}$ are integers. Then the dimension of \begin{equation*} \Ext^1_{\FH}(U_jM_b, U_{\breve{j}}M_c) \isom \PHom_{\FH}(\Omega U_jM_b, U_{\breve{j}}M_c) \end{equation*}  is determined by the number of $n$-tuples $(l, k_0,...,k_{n-1})$ of integers satisfying: 
\begin{equation*} b_l,c_l \leq p-2, \end{equation*}
\begin{equation}\label{ExtEq1}
j-\breve{j}+\sum_{\substack{i=0\\i \neq l}}^{n-1}p^i(b_i-c_i+2k_i)+p^l(-b_l-c_l+2k_l-2) \equiv 0 \pmod {p^n-1}
\end{equation}
with also
\begin{equation*} \mathrm{max}\{0, c_i-b_i\} \leq k_i\leq c_i \end{equation*} for $i \neq l$ and 
\begin{equation*} \mathrm{max}\{0, b_l+c_l+2-p \} \leq k_l\leq \mathrm{min}\{b_l, c_l\}. \end{equation*}
\end{lemma}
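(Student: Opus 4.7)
My plan is to reduce the computation to Carlson's enumeration of $\Ext^1$ between simple $\FG$-modules in \cite{Carlson}, via a sequence of reductions each of which accounts for one feature of the claimed formula.

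First, because $U_t$ is one-dimensional, $U_t \tensor -$ is an exact Morita self-equivalence of $\FH\Mod$ with inverse $U_{-t}\tensor -$, hence
$$\Ext^1_{\FH}(U_jM_b, U_{\breve{j}}M_c) \isom \Ext^1_{\FH}(M_b, U_{\breve{j}-j}M_c).$$
Only the difference $\breve{j}-j$ can appear, which matches the way $j$ and $\breve{j}$ enter (\ref{ExtEq1}). It therefore suffices to compute the right hand side.

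Second, in the untwisted case $(j,\breve{j})=(0,0)$ I use the stable equivalence of Morita type induced by restriction from $G$ to $H$ on the full defect blocks, recalled just before this subsection. Because the induced equivalence $\FG\Pmod \to \FH\Pmod$ commutes with $\Omega$, it yields
$$\Ext^1_{\FH}(M_b, M_c) \isom \PHom_{\FH}(\Omega M_b, M_c) \isom \PHom_{\FG}(\Omega S_b, S_c) \isom \Ext^1_{\FG}(S_b, S_c).$$
The right hand dimension is computed by Carlson, and his enumeration is already by tuples $(l, k_0, \ldots, k_{n-1})$: the index $l$ singles out the unique base-$p$ position at which the non-split Yoneda piece is supported, which is why the bound $b_l, c_l \leq p-2$ and the tighter bounds on $k_l$ only appear at $i=l$, while the $k_i$ for $i \neq l$ parameterise composition factors coming from the uniserial restrictions of $\sigma^i(V^{b_i})$ and $\sigma^i(V^{c_i})$ to $H$.

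Third, to restore the general twist I use the hypercuboid description of $M_b$ and $M_c$ recalled from \cite{Holloway} and the appendix: each is an iterated extension of one-dimensional $\FH$-modules whose composition factors are of the form $U_{\sum_i p^i(c_i - 2k_i)}$ (and analogously for $b$). Tensoring by $U_{\breve{j}-j}$ simply shifts every composition factor index by $\breve{j}-j$; re-running Carlson's bookkeeping with this global shift inserts precisely the $j-\breve{j}$ summand on the left of (\ref{ExtEq1}), the rest of the congruence being untouched. The modulus $p^n-1$ is exactly the period of the isomorphism class of $U_i$, as noted when the simple $\FH$-modules were introduced.

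The main obstacle will be the careful tracking of signs and indexing conventions. The Remark just after the definition of the $U_i$ warns that our $U_i$ is $U_{-i}$ in Chuang's convention used in \cite{Chuang}, and Carlson's labelling of base-$p$ digits differs once more. A careless translation easily sends the $-b_l-c_l+2k_l-2$ term to $-b_l-c_l+2k_l$ or flips the sign of the congruence. My intention is to carry the proof through in the conventions of \cite{Carlson} and \cite{Chuang} and only reindex at the final display, cross-checking on the easy cases $b=c$ (self-Ext) and $b_l=0$ (where the bounds on $k_l$ collapse) as a sanity test before stating the refinement in the form given.
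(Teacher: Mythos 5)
Your first two reductions are sound and match the paper's strategy: only the difference $\breve{j}-j$ matters because $U_t\tensor-$ is an exact self-equivalence, and in the untwisted case the stable equivalence coming from Green correspondence identifies $\PHom_{\FH}(\Omega M_b,M_c)$ with $\PHom_{\FG}(\Omega S_b,S_c)\isom\Ext^1_{\FG}(S_b,S_c)$, which is Carlson's Theorem 4.1 with $r=1$. (The paper additionally records how Carlson's parameters $e_i,f_i$ collapse for $r=1$, separately for $p$ odd and $p=2$, to produce the stated bounds on $k_l$ and the $-2$ in the $p^l$ term; you defer this to ``careful tracking of conventions'', which is acceptable as a plan but is where most of the actual bookkeeping lives.)

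The genuine gap is your third step. Knowing that tensoring by $U_{\breve{j}-j}$ shifts the composition factors of $M_c$ does not let you ``re-run Carlson's bookkeeping'': composition factors do not determine $\Ext^1$, and Carlson's enumeration is a statement about $\FG$-modules, not about arbitrary one-dimensional twists of $\FH$-modules. For $\breve{j}\neq j$ the module $U_{\breve{j}-j}M_c$ is not the restriction of any simple $\FG$-module, so the Green-correspondence reduction of step two is simply unavailable, and nothing in your argument computes $\Ext^1_{\FH}(M_b,U_{\breve{j}-j}M_c)$. The paper closes exactly this gap by the spectral sequence argument of \cite{Chuang}: since $[H:P]$ is prime to $p$, one has $\Ext^1_{\FH}(M,N)\isom\bigl(\Ext^1_{\F P}(M,N)\bigr)^{H/P}$, the torus $H/P\isom C_{q-1}$ acts on $\Ext^1_{\F P}(M_b,M_c)$ with weight spaces indexed by the tuples $(l,k_0,\dots,k_{n-1})$ and weights given by the sum in (\ref{ExtEq1}), and twisting by $U_{\breve{j}-j}$ selects the weight congruent to $j-\breve{j}$ modulo $p^n-1$ rather than the weight $0$. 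You need this (or an equivalent fixed-point/weight-space computation on the level of the $\Ext$ group itself, not of the modules) to justify inserting the $j-\breve{j}$ term into the congruence.
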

\begin{proof}
The proof is similar to the proof in \cite{Chuang}. Consider the summation \begin{equation}\sum_{\substack{i=0\\i \neq l}}^{n-1}p^i(b_i-c_i+2k_i)+p^l(-b_l-c_l+2k_l-2) \equiv 0 \pmod {p^n-1}.
\end{equation}Adapt Carlson's theorem \cite[Theorem 4.1]{Carlson} in $\Ext$ groups of simple $\FG$-modules. Fixing $r=1$, it splits into two cases. \begin{enumerate} \item When $p$ is odd, condition (1) forces $e_i=0$ and $f_i=0$ except for one $f_i$, record this subscript as $l$. Condition (3) gives the first constraint, and the condition \ref{ExtEq1} is a simplified version after substitution. \item When $p=2$, fixing $r=1$ forces all but one $e_i=0$. Again we record that subscript $l$, this forces $b_l=c_l=0$. The requirements on $k_i$ in our version is a precise replacement of (3') in \cite{Carlson}. Then to see the last two equations here agree with the original, note we factored the first term $-2(2^l)$ (since $e_l=1$) into the summand to yield the $-2$ in $p^l$ term. With the fact that $b_l=0$, the terms inside bracket of $p^l$ are indeed equal. \end{enumerate} Finally, the $j-\breve{j}$ term in (\ref{ExtEq1}) is introduced using a spectral sequence argument as in \cite{Chuang}, and our proof is complete. \end{proof}

\begin{lemma}\label{Holloway}
The dimension of the stable morphism group $\PHom_{\FH}(U_jM_b, U_{\breve{j}}M_c)$ is equal to the number of $n$-tuples $(k_0,...,k_{n-1})$ of integers satisfying:
\begin{enumerate}
\item $\mathrm{max}\{0,c_i-b_i\}\leq k_i \leq c_i$ for all $i$.
\item There exist an integer $l$ such that $k_l < b'_l$.
\item \begin{equation}\label{ExtEq2} j-\breve{j}+\sum_{i=0}^{n-1}p^i(b_i-c_i+2k_i) \equiv 0 \pmod {p^n-1} \end{equation}
\end{enumerate}
\end{lemma}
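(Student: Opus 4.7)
The plan is to mimic the strategy of Lemma \ref{Carlson}, but working with $\PHom$ directly and absorbing the two twists into a single parameter via the Morita self-equivalence $U_j \tensor -$. Since
\[ \PHom_{\FH}(U_j M_b, U_{\breve j} M_c) \isom \PHom_{\FH}(M_b, U_{\breve j - j} M_c), \]
we may assume $j = 0$ and replace $\breve j$ by $\breve j - j$; the congruence (\ref{ExtEq2}) is then exactly the weight-matching condition for a genuine $H$-equivariant map from $M_b$ to $U_{\breve j - j} M_c$.

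Next I would enumerate $\Hom_{\FH}(M_b, U_{\breve j - j} M_c)$ directly using the explicit ``hypercuboid'' description of $M_b$ and $M_c$ (from the Appendix or \cite{Holloway}). Each such module is indecomposable with a weight basis indexed by tuples $(i_0,\dots,i_{n-1})$ satisfying $0 \leq i_s \leq b_s$ (respectively $\leq c_s$), on which the diagonal torus acts by an explicit character and the unipotent radical acts by standard ``shift'' maps between adjacent weight components. An $H$-equivariant $\F$-linear map is pinned down by the image of the highest-weight generator and must then be propagated compatibly to all other weight vectors: requiring the image of each basis vector to land in a valid weight space of $U_{\breve j - j} M_c$ forces the range constraints $\max\{0,c_i-b_i\} \leq k_i \leq c_i$, and the overall agreement with the central character of $U_{\breve j - j}$ gives the modular equation (\ref{ExtEq2}).

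The last step is to pass from $\Hom$ to $\PHom$ by discarding those maps that factor through a projective $\FH$-module. The projective cover $P(M_b)$ is itself a hypercuboid with every side $p-1$, obtained by ``filling out'' each coordinate from $b_i$ up to $p-1$; dually, $U_{\breve j - j} M_c$ sits inside an injective (= projective) hull of the same shape. A routine check shows that a tuple $(k_0,\dots,k_{n-1})$ parameterizes a map factoring through this projective precisely when every coordinate is fully saturated, i.e. $k_i = b'_i = p-2-b_i$ for all $i$. Thus the non-projective maps correspond exactly to tuples satisfying condition (2), namely the existence of some index $l$ with $k_l < b'_l$.

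The main obstacle is the last identification: establishing rigorously that a tuple corresponds to a projective-factoring map if and only if it is fully saturated. This requires carefully tracing how the embedding $M_b \hookrightarrow P(M_b)$ and the inclusion $U_{\breve j - j} M_c \hookrightarrow I(U_{\breve j - j} M_c)$ interact with the weight decomposition, and verifying that the projective hypercuboid absorbs exactly the saturated configurations. Once this matching is in hand, subtracting the saturated tuples from the $\Hom$-count yields the stated enumeration, completing the generalization of Holloway's computation.
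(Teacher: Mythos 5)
Your overall strategy --- reduce to the hypercuboid description of $M_b$ and $M_c$, count homomorphisms by the position of the head of $U_jM_b$ inside $U_{\breve{j}}M_c$, and then quotient out the maps factoring through projectives --- is exactly the route the paper takes (it is a generalisation of Holloway's Theorem 5.2.1(2), argued via his figure 5.2). Conditions (1) and (3) come out the same way in both arguments: (1) says the image contains the simple socle of $U_{\breve{j}}M_c$ and (3) matches the weight of the head.

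The gap is in your final step, the one you yourself flagged as the main obstacle. You claim that a tuple $(k_0,\dots,k_{n-1})$ corresponds to a map factoring through a projective precisely when $k_i=b'_i$ for every $i$. That cannot be the right criterion, because it does not even match condition (2) of the lemma: the negation of ``there exists $l$ with $k_l<b'_l$'' is ``$k_l\geq b'_l$ for all $l$'', an inequality in each coordinate, not an equality. The correct geometric statement (the dashed line in Holloway's figure) is that the map factors through a projective if and only if the injective hull of $U_jM_b$ --- a hypercuboid with all sides of maximal length, i.e.\ of size $(p-1,\dots,p-1)$ in the paper's convention --- when positioned according to $(k_0,\dots,k_{n-1})$, covers all of $U_{\breve{j}}M_c$; coordinate-wise this covering condition is exactly $k_l\geq b'_l$ for every $l$. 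The set of such tuples is in general strictly larger than the single ``saturated'' one: the maps factoring through projectives form the image of $\Hom_{\FH}(I(U_jM_b),U_{\breve{j}}M_c)$ in $\Hom_{\FH}(U_jM_b,U_{\breve{j}}M_c)$, and this image need not be at most one-dimensional, since several tuples in the region $k_l\geq b'_l$ for all $l$ may satisfy (1) and (3) simultaneously. Discarding only the saturated tuple would therefore overcount $\PHom_{\FH}(U_jM_b,U_{\breve{j}}M_c)$. Replacing your equality criterion by the covering criterion repairs the argument and recovers condition (2) exactly.
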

\begin{proof}
Considering the restriction of $\FG$-simple modules are of a special class of $\FH$-modules with the shape of hypercuboids. The $\FH$-modules $M_b$ has irreducible top $U_b$ and the length of its sides $(1+b_0, 1+b_1,..., 1+b_{n-1})$. All components within the cuboid is decided by its position (c.f. \cite[pg.35]{Holloway}). Now consider $\PHom_{\B}(U_jM_b, U_{\breve{j}}M_c)$, it has become a consideration of the head of $U_jM_b$'s position in $U_{\breve{j}}M_c$. A two-dimensional illustration (cuboid becomes rectangle) is shown in \cite[figure 5.2]{Holloway}. 

Condition (1) restricts the position of the modules such that $U_jM_b$ contains the socle of $U_{\breve{j}}M_c$. Condition (2) rules out the possibility of such a map factoring through the injective hull of $U_jM_b$. Shown by dashed line in the figure, if the injective hull, which is known to have size $(p-1,...,p-1)$, covered $U_{\breve{j}}M_c$, the map factors through projectives (=injectives) hence quotiented out of $\PHom(U_jM_b, U_{\breve{j}}M_c)$. Lastly condition (3) locate the head of $U_jM_b$ in the component of $U_{\breve{j}}M_c$.
\end{proof}
\begin{remark} The proof is a generalisation of \cite[Theorem 5.2.1 (2)]{Holloway}. \end{remark}

These two lemmas build up arithmetic constraints for a certain type of extension. Now we would tailor the lemmas into two of particular situation. But first we have to decode the modulo equations (\ref{ExtEq1}) and (\ref{ExtEq2}) in both of the lemmas. Temporarily ignore the term $j-\breve{j}$ in (\ref{ExtEq1}) and (\ref{ExtEq2}) and regarded $p$ as an indeterminate. We define the \emph{$p^i$-digit} to be the coefficient with the term $p^i$ (as if $p$ is an indeterminate). The following two inequalities aim at looking at these $p^i$-digits.
\begin{lemma} With $b_i$, $c_i$, $k_i$, $l$, $p$ as defined and restricted under lemma \ref{Carlson} and lemma \ref{Holloway}, we have
\begin{equation}\label{POS} 0 \leq |b_i-c_i| \leq b_i-c_i+2k_i \leq b_i+c_i \leq 2p-2 \end{equation}
\begin{equation}\label{NEG} -p \leq -b_l-c_l+2k_l-2 \leq -2. \end{equation}
\end{lemma}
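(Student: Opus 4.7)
The plan is to verify both chains of inequalities by a straightforward unwinding of the constraints on $k_i$ and $k_l$ already recorded in Lemmas~\ref{Carlson} and~\ref{Holloway}; each inequality should turn out to be a trivial rearrangement of one of those constraints, together with the obvious bound $b_i, c_i \leq p-1$.

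For chain (\ref{POS}) I would handle the four inequalities from left to right. The first, $0 \leq |b_i - c_i|$, is vacuous. For the second, $|b_i - c_i| \leq b_i - c_i + 2k_i$, I would split on the sign of $b_i - c_i$: if $b_i \geq c_i$ it reduces to $k_i \geq 0$, and if $b_i < c_i$ it reduces to $k_i \geq c_i - b_i$; combined, these are exactly the lower bound $k_i \geq \max\{0, c_i - b_i\}$ provided by the two lemmas. The third inequality, $b_i - c_i + 2k_i \leq b_i + c_i$, is a direct rewriting of $k_i \leq c_i$. The last, $b_i + c_i \leq 2p - 2$, is immediate from $b_i, c_i \leq p - 1$.

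For (\ref{NEG}), the upper bound $-b_l - c_l + 2k_l - 2 \leq -2$ rearranges to $2k_l \leq b_l + c_l$, which is implied by $k_l \leq \min\{b_l, c_l\}$. The lower bound $-p \leq -b_l - c_l + 2k_l - 2$ is equivalent to $2k_l \geq b_l + c_l + 2 - p$, and I would split on the sign of $b_l + c_l + 2 - p$: if it is nonpositive, the inequality holds because $k_l \geq 0$; if it is positive, then by Lemma~\ref{Carlson} we have $k_l \geq b_l + c_l + 2 - p$, and since this quantity is nonnegative we get $2k_l \geq b_l + c_l + 2 - p$.

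There is no genuine obstacle here; the lemma is bookkeeping that packages the numerical content of the constraint sets into a clean form so it can be invoked when decoding the $p$-adic expressions in (\ref{ExtEq1}) and (\ref{ExtEq2}) in the sequel. The only point to watch is which range (the symmetric one for $i \neq l$, or the asymmetric one at $i = l$) is being used at each step, and this is dictated by which of (\ref{POS}) or (\ref{NEG}) is being proved.
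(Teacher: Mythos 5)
Your proposal is correct and follows essentially the same route as the paper: each inequality is a direct rearrangement of the bounds $\max\{0,c_i-b_i\}\leq k_i\leq c_i$, respectively $\max\{0,b_l+c_l+2-p\}\leq k_l\leq\min\{b_l,c_l\}$, together with $b_i,c_i\leq p-1$, and your case split for the lower bound of (\ref{NEG}) is exactly the paper's split on $b_l+c_l\leq p-2$ versus $b_l+c_l>p-2$. The only cosmetic difference is that the paper assumes $b_i\geq c_i$ without loss of generality for (\ref{POS}) where you split cases explicitly, which is if anything slightly more careful given that the constraint on $k_i$ is not symmetric in $b_i$ and $c_i$.
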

\begin{proof}
This statement is a technicality mentioned but not shown explicitly in \cite[Theorem 4.1]{Carlson}. Without loss of generality assume $b_i\geq c_i$. For the inequality signs in (\ref{POS}), the first sign is obvious, second sign because $0 \leq k_i$, third sign because $k_i \leq c_i$ and fourth as $b_i, c_i \leq p-1$. We turn to (\ref{NEG}) and the maximum value is \begin{equation*} -b_l-c_l+2c_l-2=c_l-b_l-2\leq -2. \end{equation*} Considering the minimum value of (\ref{NEG}), we split into two cases: \begin{enumerate} \item When $b_l+c_l\leq p-2$ we have $-b_l-c_l-2\geq -p$
\item When $b_l+c_l > p-2$ we have $-b_l-c_l+2(b_l+c_l+2-p)-2=b_l+c_l+2-2p\geq -p$. \end{enumerate} Combining all the arguments gives the two inequalities.
\end{proof}

In the next part we will be defining some new symbols and terms that are needed to express clearly the upcoming results. These end up with two corollaries of Lemma \ref{Carlson} and Lemma \ref{Holloway}, which show that the triangles (\ref{ExtMb}) are exactly what are needed to verify our main theorem. The proof of the arguments are much like \cite[Theorem 4.1]{Carlson} with extra consideration for subscripts of $U$ expressed in the statement by the term $j-\breve{j}$.

\begin{definition}\label{Filt}
Recall $\S$ is the set of non-isomorphic simple $\B$-modules (Definition 2.15).
Define sets \begin{equation*} I_i=\{S_a \mid a_{i+1}=...=a_{n-1}=p-1\}\end{equation*} for $0 \leq i \leq n-1$ to be subsets of simple $\B$-modules. \end{definition}
Note that $I_{n-1}=\S$ since we have no restriction on $S_a$, and a filtration \begin{equation*} I_\bullet = (\emptyset=I_{-1}\subset I_0 \subset I_1 \subset \ldots \subset I_{n-1}=\S) \end{equation*} on the complete set of non-isomorphic simple $\B$-modules.

\begin{definition}\label{Layer}
Fix a prime $p$. We say a simple module $S_a$ is \emph{in layer $i$} if $S_a \in I_{i+1}-I_i$. We also define that an integer $a$ and the $\FH$-module $M_a$ are \emph{in layer $i$} if $S_a$ is.\end{definition}
\begin{remark} This is equivalent to saying $a$ has base-$p$ representation $(a_0,...,a_i,p-1,...,p-1)$ with $a_i\neq p-1$, or by base-$p$ arithmetic, \begin{equation*}
	p^n-p^{i+1} \leq a \leq p^n-p^i-1.
	\end{equation*} 
\end{remark}
\begin{definition}\label{Corr}
Fix a prime $p$. Let $a$ be an integer with $0\leq a \leq q-2$ with base-$p$ representation $(a_0,...,a_{n-1})$. Let $a$ be an integer in the layer $s$. That is, we have\begin{equation*} a=(a_0,...,a_s,p-1,...,p-1)\qquad \text{with} \qquad a_s\leq p-2. \end{equation*} The \emph{partner} of $a$, denoted $a'$ (see notation below for clarification of use), is \begin{equation*} a(s')=(a_0,...,a'_s,p-1,...,p-1);\end{equation*} The \emph{completion} of $a$, denoted $\overline{a}$, is \begin{equation*} a(\overline{s})=(a_0,...,a_{s-1},p-1,p-1,...,p-1).\end{equation*} (c.f. Definition \ref{Layer} and Definition \ref{Aux}) 
\end{definition}
\begin{notation}Recall that we defined $a'_i=p-s-a_i$. It will not contradict if we apply the following: If there is a subscript on the letter concerned, the prime treats it as a base-$p$ digit i.e. $a'_i=p-2-a_i$. Otherwise it is treated as the partner of the integer defined just above, i.e. $a'=(a_0,...,a'_i,...,a_{n-1})$ for $i$ the layer of $a$. \end{notation}
\begin{remark}The partner defined here turns out to be the correspondence of simple $\B$-modules used for our trick later. For odd primes, an even number is a partner of an odd number of the same layer, and vice versa. For $p=2$, the partner of every integer is itself. We also point out that under this involution, the filtration in Definition \ref{Layer} is fixed. \end{remark}

\begin{definition}\label{FC}
For an integer $m$ with $1\leq m \leq p^{n-1}$, define $r_m$ to be the integer such that $p^{r_m}$ divides $m$ with a $p'$-integer quotient. Let \begin{equation*} m=\sum_{i=0}^{n-1}p^im_i=\sum_{i=r_m}^{n-1}p^im_i\end{equation*} be its base-$p$ expression. For an integer $s$, define \begin{equation*} \lfloor m \rfloor_s:=\sum_{i=s}^{n-1}p^im_i,\end{equation*} the \emph{floor of $m$ at $s$} and \begin{equation*} \lceil m \rceil^s=\sum_{i=s}^{n-1}p^im_i+p^s, \end{equation*} the \emph{ceiling of $m$ at $s$}. 
\end{definition}
\begin{remark} We hide the subscript $m$ of $r$ when it is obvious which $m$ we are referring to. Note that for any $m$, the floor(resp. ceiling) of $m$ at $s$ is the nearest integer smaller(resp. greater) than or equal to $m$ that is divisible by $p^s$. \end{remark}

\begin{proposition}\label{ExtCor1}
Let $m$ be an integer with $1\leq m \leq p^{n-1}$. Let $M_c$ be a module in layer $i$ for some $i\leq r=r_m$, and $M_b$ be a module in layer $s$ with $s>r$.
\begin{enumerate}[(a)]
\item If $m$ and $b$ satisfy $m_{s-1}+b_s < p-1$, then \begin{equation*} \PHom_{\FH}(U_{\lfloor m \rfloor_s p}M_b, U_{mp}M_c)=0; \end{equation*}
\item If $m$ and $b$ satisfy $m_{s-1}+b_s = p-1$ and $m_{s-2}=...=m_0=0$, then \begin{equation*} \PHom_{\FH}(U_{\lfloor m \rfloor_s p}M_b, U_{mp}M_c) \end{equation*} is of dimension 1 when $c=\overline{b}$. The corresponding unique non-split extension of $U_{mp}M_c$ by $U_{\lfloor m \rfloor_s p}M_b$ is represented by a distinguished triangle \begin{equation*} U_{mp}\Omega M_c \to U_{\lceil m \rceil^s p}\Omega M_{b'}\to U_{\lfloor m \rfloor_s p}M_b \leadsto \end{equation*}
\end{enumerate} in $\FH\Pmod$.
\end{proposition}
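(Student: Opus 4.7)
My plan is to apply Lemma~\ref{Holloway} directly to compute the dimension of $\PHom_{\FH}(U_{\lfloor m \rfloor_s p} M_b, U_{mp} M_c)$ by counting tuples $(k_0, \ldots, k_{n-1})$ satisfying its three constraints. Setting $j = \lfloor m \rfloor_s p$ and $\breve{j} = mp$ gives $j - \breve{j} = -p(m - \lfloor m \rfloor_s) = -\sum_{i=0}^{s-1} p^{i+1} m_i$, so the modular congruence (3) becomes
\[
\sum_{i=0}^{n-1} p^i d_i \;\equiv\; T := \sum_{i=1}^{s} p^i m_{i-1} \pmod{q-1}, \qquad d_i := b_i - c_i + 2k_i.
\]
The layer hypotheses ($b$ in layer $s$, $c$ in layer $i_c \le r < s$) fix the range of each $d_i$: $d_i = 2k_i \in [0, 2p-2]$ for $i > s$ and for $r < i < s$; $d_s \in [p-1-b_s, p-1+b_s]$ by condition (1); and $d_i \in [|b_i-c_i|, b_i+c_i]$ with parity $\equiv b_i+c_i \pmod 2$ for $i \le i_c$.

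Because $0 \le T < p^{s+1} < q-1$ and $\sum p^i d_i \le 2(q-1)$, the congruence splits into two integer equations: Case~A, $\sum p^i d_i = T$; and Case~B, $\sum p^i d_i = T + (q-1)$. In Case~A, the non-negativity of each $d_i$ together with $T < p^{s+1}$ forces $d_i = 0$ for every $i > s$, whereupon a base-$p$ digit match at position $s$, with carries $C_i \in \{0, 1\}$ from below, yields $d_s + C_s = m_{s-1}$ and hence $d_s \le m_{s-1}$. Combined with $d_s \ge p - 1 - b_s$, this produces the key inequality $m_{s-1} + b_s \ge p-1$. The hypothesis of (a) violates this, ruling Case~A out. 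A parallel digit-by-digit argument on the base-$p$ expansion of $T + q - 1$, using the parity of the even $d_i$ for $i > s$ to close off the carry escape route, also contradicts $m_{s-1} + b_s < p-1$ at position $s$, so Case~B is likewise impossible. This proves~(a).

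For (b), I exhibit a nonzero element by applying Lemma~\ref{TRI} with index $i$ taken to be $s$ (the layer of $b$) to obtain the triangle
\[
U_{p^s(p-1-b_s)}\,\Omega M_{b(\overline s)} \to U_{p^{s+1}}\,\Omega M_{b(s')} \to M_b \leadsto,
\]
then tensor throughout by $U_{\lfloor m \rfloor_s p}$. Using $b(\overline s) = \overline b$, $b(s') = b'$, $\lceil m \rceil^s = \lfloor m \rfloor_s + p^s$, and the crucial identity
\[
\lfloor m \rfloor_s p + p^s(p-1-b_s) \equiv mp \pmod{q-1},
\]
which follows from the hypotheses $m_{s-1} = p-1-b_s$ and $m_0 = \ldots = m_{s-2} = 0$, the resulting triangle matches the asserted one exactly when $c = \overline b$. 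Non-splitness of the underlying short exact sequence (the lemma preceding \ref{TRI}) ensures that the connecting morphism is a nonzero element of $\PHom_{\FH}(U_{\lfloor m \rfloor_s p} M_b, U_{mp} M_c)$, giving dimension at least one. To obtain exactly one, I rerun the Case~A analysis with the equality $m_{s-1} = p-1-b_s$: this pins down $d_s = p-1-b_s$ (so $k_s = p-1-b_s$) and $d_i = 0$ for $i \ne s$ as the unique tuple, with Case~B excluded by the parity constraints on the $d_i$. Condition~(2) of Lemma~\ref{Holloway} is then verified by descending to an index $l$ in the support of $\overline b$, using $\overline b \ne q-1$ to guarantee its existence.

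The main obstacle will be the careful digit-by-digit bookkeeping in Case~B, where carries can propagate through multiple positions and must be reconciled with the parity constraints on each $d_i$ and the free lower digits $m_0, \ldots, m_{s-2}$ in part~(a). A secondary subtlety is the verification of Holloway's condition~(2) for the unique candidate tuple at boundary cases such as $b_{i_c} = p-2$, which requires exploiting the precise layer structure of $c = \overline b$ to locate a smaller index $l$.
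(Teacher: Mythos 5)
Your overall route is the same as the paper's: feed $j=\lfloor m\rfloor_s p$, $\breve{j}=mp$ into Lemma \ref{Holloway}, rewrite condition (3) as $\sum_i p^i d_i\equiv T\pmod{q-1}$ with $d_i=b_i-c_i+2k_i\ge 0$, and split into the two integer equations $\sum p^i d_i=T$ and $\sum p^i d_i=T+(q-1)$. Your Case A analysis (no solutions above position $s$, the digit match $d_s\le m_{s-1}$ against $d_s\ge p-1-b_s$, and in part (b) the unique tuple forcing $c=\overline{b}$) coincides with the paper's "expression equals zero" case, and your construction of the triangle in (b) by tensoring Lemma \ref{TRI} at $i=s$ with $U_{\lfloor m\rfloor_s p}$ and using $m_{s-1}=p-1-b_s$ is exactly the paper's.

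The gap is Case B. You propose to exclude $\sum p^i d_i=T+(q-1)$ by digit-and-carry bookkeeping together with the parity of $d_i=2k_i$ for $i>s$, ending in a contradiction with $m_{s-1}+b_s<p-1$. That cannot succeed: conditions (1) and (3) of Lemma \ref{Holloway} genuinely admit Case-B solutions under hypothesis (a). Take $p=3$, $n=3$, $m=1$ (so $r=0$, $T=3$, $q-1=26$), $b=(2,2,1)$ in layer $s=2$, $c=(0,2,2)$ in layer $0$; then $m_{s-1}+b_s=0+1<2$, yet $(k_0,k_1,k_2)=(0,0,2)$ satisfies condition (1) and gives $-3+2+0+27=26\equiv 0\pmod{26}$, i.e.\ lands in Case B. Note also that here $s=n-1$, so there are no indices $i>s$ and the parity constraint you invoke is vacuous. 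What eliminates such tuples is condition (2) of Lemma \ref{Holloway}, which your sketch never uses in Case B: the distinguished index $l$ with $k_l<b'_l$ must have $c_l\le p-2$ (otherwise condition (1) forces $k_l\ge p-1-b_l>b'_l$), hence $l\le r$; then $\sum_{i=0}^{l}p^i d_i$ lies in $[0,p^{l+1}-2]$ while $T$ and all terms $p^id_i$ with $i>l$ are divisible by $p^{l+1}$, so the total is never $\equiv -1\pmod{p^{l+1}}$ and cannot equal $T+(q-1)$. This is precisely how the paper rules out the value $q-1$, uniformly for both (a) and (b); your part-(b) claim that Case B is "excluded by the parity constraints on the $d_i$" has the same defect. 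The remainder of your part (b) — uniqueness of the Case-A tuple and verifying condition (2) at an index in the support of $\overline{b}$ — is sound.
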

\begin{proof}
The condition (\ref{ExtEq2}) in Lemma \ref{Holloway} requires 
\begin{equation*} \lfloor m \rfloor_s p-mp+\sum_{i=0}^{n-1}p^i(b_i-c_i+2k_i) \equiv 0 \quad \pmod{p^n-1} \end{equation*} for some $k_i$ satisfying condition 1 in Lemma \ref{Holloway} and a particular $k_l$ for condition 2 in Lemma \ref{Holloway} if a non-zero stable homomorphism exists. Note that \begin{equation*} \lfloor m \rfloor_s p-mp=-\sum_{i=r}^{s-1}p^{i+1}m_i=-\sum_{i=r+1}^{s}p^im_{i-1}. \end{equation*} Merging the term $mp-\lfloor m \rfloor_s p$ (the $j-\breve{j}$ term) into the last expression, we have that
\begin{equation}\label{1} \sum_{i=0}^{r}p^i(b_i-c_i+2k_i)+\sum_{i=r+1}^{s}p^i(b_i-(p-1)+2k_i-m_{i-1})+\sum_{i=s+1}^{n-1}p^i(2k_i)\end{equation} has to be divisible by $p^n-1$. Now we consider the actual value of expression (\ref{1}). From (\ref{POS}), the $p^{r+1}$ to $p^s$ digits lie between $-(p-1)$ and $2(p-1)$ and other digits lie between 0 and $2(p-1)$. Hence, for a possible extension to exist, (\ref{1}) evaluates to one of the four following values: 0, $p^n-1$, $2p^n-2$ or $-p^n+1$. Firstly, it cannot be $-(p^n-1)$ because it requires (\ref{1}) to have all terms at $-(p-1)$ (the smallest possible upon all summands) but it must have a non-negative $p^0$ term. Secondly note that $c_l=p-1$ will force $b'_l<p-1-b_l \leq k_l$. So, the range of $l$ mentioned in condition 2 of theorem \ref{Holloway} is restricted to $0 \leq l \leq r$. For this particular $l$, we have \begin{equation*} b_l-c_l+2k_l \leq b_l-c_l+b'_l+c_l=p-2. \end{equation*} This rules out the possibility for (\ref{1}) to be $2p^n-2$. Thirdly, the last inequality indicates the sum up to $p^l$-term:
\begin{equation*} \sum_{i=0}^{l}p^i(b_i-c_i+2k_i) \end{equation*} (since $l$ cannot be greater than $r$ as $b_l, c_l\leq p-2$) lies between 0 and $p^{l+1}-2$, which can never be $-1$ modulo $p^{l+1}$. Adding up the remaining terms of (\ref{1}) will not change this. However, if the expression (\ref{1}) is equal to $p^n-1$, we have the expression equal to $-1$ modulo $p^{l+1}$, creating a contradiction. Thus, the argument above boils down to the conclusion that (\ref{1}) is zero. Now we split into the following two cases:
\begin{enumerate}[(a)]
\item If the condition $m_{s-1}+b_s < p-1$ holds, it forces the value of the $p^s$-digit to be at least 1. Since the sum \begin{equation} \sum_{i=0}^{r}p^i(b_i-c_i+2k_i)+\sum_{i=r+1}^{s-1}p^i(b_i-(p-1)+2k_i-m_{i-1})\end{equation} must be greater than $-(p^s-p)$, adding the next term $p^s$ will make the subtotal \begin{equation} \sum_{i=0}^{r}p^i(b_i-c_i+2k_i)+\sum_{i=r+1}^{s}p^i(b_i-(p-1)+2k_i-m_{i-1})\end{equation} strictly greater than zero. With other $p^i$-digits ($i>s$) non-negative, we conclude that it cannot be zero hence the dimension of $\PHom_{\FH}(U_{\lfloor m \rfloor_s p}M_b, U_{mp}M_c)$ is zero.
\item Note that the other condition requires $m_{i-1}=0$ on every digit except the $p^s$-digit. Hence from (\ref{POS}) each digit has to be non-negative. Furthermore, the $p^s$-digit is non-negative too, so every digit has to be zero. We can conclude that $b_i=c_i$ for every $i$ by (\ref{POS}) (with $k_i=0$) except when $i=s$, in which case we have $b_s=p-1-m_{s-1}$. Note that the condition 2 in lemma \ref{Holloway} is automatically satisfied by $k_r$ in this case, which is shown by the fact that $c_r\leq p-2$ by definition. Thus, we have $b'_r=p-2-c_r\geq 0=k_r$.
\end{enumerate}
Now we consider the only possible non-trivial extension. The conditions require $c$ such that $c_i=b_i$ for $0\leq i \leq n-1$ except when $i=s$ and $c_s=p-1$ as given by (b), which means that $c=\overline{b}$. Now put $i=s$, tensor the sequence (\ref{ExtMb}) by $U_{\lfloor m \rfloor_s p}$, it induces the triangle \begin{equation*}
 U_{p^s(1+b'_s)}U_{\lfloor m \rfloor_s p}\Omega M_{\overline{b}} \to U_{p^s}U_{\lfloor m \rfloor_s p}\Omega  M_{b'} \to U_{\lfloor m \rfloor_s p}M_b \leadsto. \end{equation*}
The indices of the two $U_i$'s in the middle term add up to the ceiling of $m$ by $s$. Now by assumption, $b_s=p-1-m_{s-1}$, so \begin{equation*} p^s(1+b'_s)+\lfloor m \rfloor_s p=p^sm_{s-1}+(m-m_{s-1}p^{s-1})p=mp. \end{equation*} The previous triangle becomes \begin{equation*} U_{mp}\Omega M_{\overline{b}} \to  U_{\lceil m \rceil^{r+1}p}\Omega M_{b'} \to U_{\lfloor m \rfloor_sp}M_b \leadsto. \end{equation*}  
\end{proof}

\begin{proposition}\label{ExtCor2}
Let $m$ be an integer, $1 \leq m \leq p^{n-1}$. Let $M_c$ be a module in layer $i$ with $i \leq r=r_m$, and $M_b$ in layer $s$ with $s>r$ such that $m_{s-1}+b_s \geq p-1$. Then 
\begin{equation*} 
\PHom_{\FH}(U_{\lceil m \rceil^s p}\Omega M_{b'}, U_{mp}M_c)=0.
\end{equation*}
\end{proposition}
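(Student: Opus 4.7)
The plan is to transport the stable Hom to an $\Ext^1$ and then run a digit analysis on the arithmetic constraint supplied by Lemma \ref{Carlson}. Since $U_j\tensor -$ is an exact Morita self-equivalence of $\FH\Mod$, it commutes with $\Omega$ in $\FH\Pmod$, so
\[
\PHom_{\FH}\!\left(U_{\lceil m\rceil^sp}\Omega M_{b'},\, U_{mp}M_c\right)\;\cong\;\Ext^1_{\FH}\!\left(U_{\lceil m\rceil^sp}M_{b'},\,U_{mp}M_c\right),
\]
and I would apply Lemma \ref{Carlson} with $j=\lceil m\rceil^sp$ and $\breve{j}=mp$. The digits of $b'$ read $b_i$ for $i<s$, $p-2-b_s$ for $i=s$ and $p-1$ for $i>s$. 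Writing $i_c$ for the layer of $c$, the requirement that the $l$-th digit of both $b'$ and $c$ be at most $p-2$ forces $l\leq i_c\leq r<s$.

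Next I would compute $j-\breve{j}=p(\lceil m\rceil^s-m)=p^{s+1}-\sum_{i=r+1}^{s}p^im_{i-1}$ (using $m_i=0$ for $i<r$), and let $T_i$ denote the $p^i$-coefficient of the left-hand side of (\ref{ExtEq1}) after absorbing this contribution. The estimates (\ref{POS}) and (\ref{NEG}) give $T_l\in[-p,-2]$ and $T_i\geq 0$ for $0\leq i\leq r$ with $i\neq l$, while for $r+1\leq i\leq s-1$ the extra $-m_{i-1}$ only lowers $T_i$ to $\geq -(p-1)$. The crucial estimate is at $i=s$: since the $s$-th digit of $b'$ equals $p-2-b_s$, $c_s=p-1$, and the admissible range is $k_s\in[1+b_s,\,p-1]$, one has $T_s=-1-b_s+2k_s-m_{s-1}$. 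The hypothesis $b_s+m_{s-1}\geq p-1$ is used (and only used) here to force $T_s\leq 2p-3-(b_s+m_{s-1})\leq p-2$. Finally $T_{s+1}=2k_{s+1}+1\geq 1$ and $T_i\geq 0$ for $i\geq s+2$; the boundary case $s=n-1$ is handled identically by placing the $+p^{s+1}=p^n$ of $j-\breve{j}$ directly in the total.

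Assembling $S=\sum_ip^iT_i$, the lower bound
\[
S\;\geq\;-p^{l+1}+p^{r+1}+p^s(p+b_s-m_{s-1})\;\geq\;p^s\;>\;0
\]
follows from $l\leq r$ together with $m_{s-1}\leq p-1$, while the crude bound $\sum_i 2(p-1)p^i=2(p^n-1)$ with deficits $-2p^{l+1}$ at $i=l$ and $-p^{s+1}$ at $i=s$ offset by the surplus $+p^{s+1}$ at $i=s+1$ yields $S\leq 2(p^n-1)-2p^{l+1}<2(p^n-1)$. Exactly as in the proof of Proposition \ref{ExtCor1}, the partial sum $\sum_{i=0}^{l}p^iT_i$ lies in $[-p^{l+1},-2]$, hence reduces modulo $p^{l+1}$ to a value in $[0,p^{l+1}-2]$, and so cannot equal $p^{l+1}-1\equiv p^n-1\pmod{p^{l+1}}$. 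Therefore $S$ is a positive integer strictly less than $2(p^n-1)$ and distinct from $p^n-1$, so (\ref{ExtEq1}) admits no solution and the stable Hom vanishes. The only real obstacle is the bookkeeping at position $s$: the upper bound $T_s\leq p-2$ and the cancellation of the $\pm p^{s+1}$ contributions both rely precisely on $b_s+m_{s-1}\geq p-1$, the flip of the strict inequality that drove the extension-producing case (b) of Proposition \ref{ExtCor1}.
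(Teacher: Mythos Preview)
Your argument is correct and follows essentially the same route as the paper: reduce to Lemma~\ref{Carlson}, observe that $l\leq r<s$, and then run the three-step digit analysis showing the Carlson sum is positive, strictly less than $2(p^n-1)$, and not congruent to $-1$ modulo $p^{l+1}$ (hence $\neq p^n-1$). The only cosmetic difference is bookkeeping: you fully absorb $j-\breve{j}=p^{s+1}-\sum_{i=r+1}^{s}p^im_{i-1}$ into per-digit coefficients $T_i$ before bounding, whereas the paper keeps $\lceil m\rceil^sp-mp$ as a separate summand (and uses $\lceil m\rceil^sp-mp\geq p^{r+1}$ directly for the lower bound); both produce the identical estimates $S\geq p^s$ and $S\leq 2(p^n-1)-2p^{l+1}$.
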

\begin{proof}
Applying our assumption to Theorem \ref{Carlson}, the condition requires
\begin{equation}\label{2}
\lceil m \rceil^s p-mp+\sum_{\substack{i=0\\i \neq l}}^{n-1}p^i(b_i-c_i+2k_i)+p^l(-b_l-c_l+2k_l-2)
\end{equation} to be divisible by $p^n-1$ for an $l$ with $0\leq l \leq r < s$, by condition 1 of \ref{Carlson}. We are going to show that there is no solution, by considering all the possible values. We have \begin{equation*} \lceil m \rceil^s p-mp+p^s(b'_s-(p-1)+2k_s)-p^l(-b_l-c_l+2k_l-2) \geq p^{r+1}+p^s-p^r(p)>0,\end{equation*} so the expression (\ref{2}) must be greater than zero.

Note that $b'_s-m_{s-1} \leq -1$. So the maximum value of the expression is 
\begin{equation*}
p^{s+1}+p^s(b'_s+p-1-m_{s-1})+\sum_{\substack{i=0\\i \neq l,s}}^{n-1}2(p-1)p^i-2p^l<\sum_{i=0}^{n-1}2(p-1)p^i=2p^n-2.
\end{equation*}
The only remaining possibility is that the expression (\ref{2}) is equal to $p^n-1$. Similar to proposition \ref{ExtCor1} we consider the partial sum of the expression up to the $p^l$-digit inclusive. In view of the inequality on $p^l$-digits in (\ref{NEG}), it should lie between $-p^{l+1}$ and $-2$. However, if the whole expression is equal to $p^n-1$ it should have remainder $-1$ modulo $p^{l+1}$, a contradiction.
\end{proof}

\section{Non-trivial autoequivalence of the derived category of $SL_2(q)$}

The non-trivial perverse autoequivalence suggested will be proved in this section. Following that we shall see a few related construction via poset perverse equivalence.

\subsection{Main construction and proof}

We shall approach this by constructing a string of algebras such that \begin{itemize} \item all of these algebras have equivalent derived categories (hence also their associated stable module categories), \item the derived equivalences between successive algebras are elementary perverse, and \item the last one is Morita equivalent to the first one \end{itemize} to give the aforementioned derived autoequivalence (which is a perverse autoequivalence). 

First we define our successive elementary perverse tilts of algebras (see Example \ref{elementary}). We then explore their induced equivalences in their stable module categories to prove our main theorem. 
\begin{definition}\label{PervSeq}
Define inductively a string of algebras $\A_m$, $1 \leq m \leq p^{n-1}$, and a bijection $\beta_m$ of the complete set of non-isomorphic simple $\A_m$-modules, $\S_m$, to the complete set of non-isomorphic simple $\B$-modules, $\S$, by the following. 

First, define $\A_0=\B$ with the identical bijection $\beta_0:=\S \to \S_0$. Suppose $\A_{m-1}$ and $\beta_{m-1}$ is already defined, let $\A_m$ be a symmetric algebra such that $\A_m$ is a perverse tilt from $\A_{m-1}$ with derived equivalence \begin{equation*}
F_m:D^b(\A_{m-1}) \to D^b(\A_m)
\end{equation*} perverse relative to \begin{equation*}
(\beta_{m-1}(0 \subset I_{r_m} \subset \S), \varepsilon:p(0)=1; p(1)=0).
\end{equation*} Such algebras $\A_m$ are symmetric \cite{Rickard2} and defined up to Morita equivalence by Proposition \ref{Prop:Perv}. Now we also define a bijection $\beta_m: \S \to \S_m$ via $\beta_{F_m}\beta_{m-1}$, the composition of the earlier induced bijection and the bijection of simple modules required in the perverse equivalence (Lemma \ref{general}). We also transfer the numbering of simple modules from $\S$ to $\S_m$.

We also define specially $\A_{p^{n-1}}=\A$ and $F:D^b(\B) \to D^b(\A)$ as the composition of $F_m$ for $1 \leq m \leq p^{n-1}$.
\end{definition}

We say \emph{execute step $m$} when we apply functor $F_m$ on the derived categories of $\A_{m-1}$ and $\A_m$. The above construction and our main proposition later can be illustrated by the following diagram.

\begin{equation*}\xymatrix{\B\Mod \ar@{^{(}->}[d] & \dots & \A_{m-1}\Mod \ar@{^{(}->}[d] & \A_m\Mod \ar@{^{(}->}[d] \\
D^b(\B) \ar[rr]^{F_{m-1}\dots F_1} \ar[d] & & D^b(\A_{m-1}) \ar[r]^{F_m} \ar[d] & D^b(\A_m) \ar[d]\\
\B\Pmod \ar[d]^{\isom}  & & \A_{m-1}\Pmod \ar[ll]_{\overline{F_1^{-1}}\dots \overline{F_{m-1}^{-1}}}  & \A_m\Pmod \ar[l]_{\overline{F_m^{-1}}}\\
\FH\Pmod & \hspace{1cm} &  & }\end{equation*}

Referring to the above plan, we study the simple modules of the new algebra $\A_m$ from an inductive approach from the previous algebras. The idea is to describe the image of simple $\A_m$-modules in $\FH\Pmod$. More concretely let $M^m_a$ be the image of simple $\A_m$-modules $T_a$ in the stable module category expressed as $\FH\Pmod$. Now we can describe $M^m_a$ using induction from $M^{m-1}_a$, with the rules introduced in Example \ref{elementary}. 

It turns out the terms and extensions are controllable and the result is being summarized into the proposition below. In the following proposition and lemma when we say a module we mean an $\FH$-module.

\begin{proposition}\label{prop:main}
	Fix a number $m$ between 0 and $p^{n-1}$. The set of all $\FH$-modules $M^m_b$, the correspondents of simple $\A_m$-module $T_b$ in $\FH\Pmod$ for $0 \leq b \leq q-2$, can be partitioned into three sets \begin{equation*}
	J_m \cup K_m \cup L_m
	\end{equation*} such that, depending on parity of $k_s=\lfloor m \rfloor_s/p^s$ (of $m$), \begin{enumerate}
		\item $J_m$ consists of $M^m_b$ in layer $s \leq r_m$. The module $M^m_b \in J_m$ is isomorphic to \begin{equation*}
		\begin{cases} U_{mp}M_b & \text{ if } k_s \text{ is even.} \\ U_{mp}M_{b'} & \text{ if } k_s \text{ is odd.}\end{cases}
		\end{equation*}
		\item $K_m$ consists of $M^m_b$ of layer $s > r_m$, with \begin{equation*}
		\begin{cases} m_{s-1}+b_s \geq p-1 & \text{ if } k_s \text{ is even.} \\ m_{s-1}+b'_s \geq p-1 & \text{ if } k_s \text{ is odd.}\end{cases}
		\end{equation*}  The module $M^m_b \in K_m$ is isomorphic to \begin{equation*}
		\begin{cases} U_{\lceil m \rceil^sp}\Omega M_{b'} & \text{ if } k_s \text{ is even.} \\ U_{\lceil m \rceil^sp}\Omega M_b & \text{ if } k_s \text{ is odd.}\end{cases}
		\end{equation*} 
		\item $L_m$ consists of the remaining modules, that are those with $b$ of layer $s > r_m$ with \begin{equation*}
		\begin{cases} m_{s-1}+b_s < p-1 & \text{ if } k_s \text{ is even.} \\ m_{s-1}+b'_s < p-1 & \text{ if } k_s \text{ is odd.}\end{cases}
		\end{equation*} The module $M^m_b \in L_m$ is isomorphic to \begin{equation*}
		\begin{cases} U_{\lfloor m \rfloor_sp} M_b & \text{ if } k_s \text{ is even.} \\ U_{\lfloor m \rfloor_sp} M_{b'} & \text{ if } k_s \text{ is odd.}\end{cases}
		\end{equation*}
		\end{enumerate}
\end{proposition}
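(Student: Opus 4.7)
The plan is to prove this by induction on $m$. The base case $m=0$ is immediate: $\A_0=\B$, $M^0_b=M_b$ for all $b$, and since $k_s=0$ is even, the $J_0$ formula $U_0 M_b = M_b$ gives the conclusion with every module in $J_0$.

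For the inductive step, I would invoke Example~\ref{elementary} on the elementary perverse tilt $F_m$ relative to $(\beta_{m-1}(0\subset I_{r_m}\subset \S),\varepsilon)$. Letting $\Phi_{m-1}=\overline{F_1^{-1}}\circ\cdots\circ\overline{F_{m-1}^{-1}}$ denote the cumulative stable equivalence to $\FH\Pmod$, Example~\ref{elementary} yields $M^m_b \cong \Omega^{-1} M^{m-1}_b$ when $b\in I_{r_m}$, and places $M^m_b$ in a distinguished triangle
\begin{equation*}
\Phi_{m-1}(X)\to M^m_b\to M^{m-1}_b \leadsto
\end{equation*}
when $b\notin I_{r_m}$, where $X$ is the largest quotient of $\Omega T_b^{m-1}$ whose composition factors are indexed by $I_{r_m}$. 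These two branches feed respectively into $J_m$ and into $K_m\cup L_m$.

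For the $J_m$ branch ($b$ in some layer $s\leq r_m$), one computes $\Omega^{-1} M^{m-1}_b$ via Lemma~\ref{PerProj} with $i=s$, so the $U$-twist gains a factor $p^{s+1}$ and $b_s$ flips to $b'_s$; the identity $\lfloor m\rfloor_s=\lfloor m-1\rfloor_s+p^s$ flips the parity of $k_s$ and upgrades $(m-1)p+p^{s+1}$ to $mp$, matching the $J_m$ formula. For the $K_m/L_m$ branch ($b$ in layer $s>r_m$), I would split according to whether $m_{s-1}+b_s<p-1$ (with $b$ replaced by $b'$ in the $k_s$-odd variant) or not. In the strict-inequality case, Proposition~\ref{ExtCor1}(a) forces $\PHom$-vanishing on the relevant spaces so the triangle splits; since $\lfloor m\rfloor_s=\lfloor m-1\rfloor_s$ for $s>r_m$ the prior formula for $M^{m-1}_b$ carries over and matches $L_m$. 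In the opposite case, a suitable $U_{\lfloor m \rfloor_s p}$-twist of the triangle from Lemma~\ref{TRI} exhibits $U_{\lceil m\rceil^s p}\Omega M_{b'}$ as an extension of $U_{\lfloor m \rfloor_s p} M_b$ by $U_{mp}\Omega M_{\bar b}$, Proposition~\ref{ExtCor1}(b) shows this is the unique non-split such extension, and Proposition~\ref{ExtCor2} rules out further contributions from other simples indexed in $I_{r_m}$, so $M^m_b$ must be the $K_m$ candidate.

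The main obstacle I anticipate is the arithmetic bookkeeping across the step $m-1\to m$: since $r_{m-1}$ can differ drastically from $r_m$ (for instance $m=p^k$ has $r_m=k$ and $r_{m-1}=0$), the partition $J_{m-1}\cup K_{m-1}\cup L_{m-1}$ does not map cleanly onto $J_m\cup K_m\cup L_m$. One must track each module as it moves between the sets and reconcile the new $U$-indices, the digit-flips $b\leftrightarrow b'$, and the parities of $k_s$ with the base-$p$ data $\lfloor m\rfloor_s$ and $\lceil m\rceil^s$. The proof then becomes a single long induction organised around these combinatorial identities, together with a careful verification that the case analysis at step $m$ (on the quantities $m_{s-1}+b_s$) is exactly the one matched by the $\PHom$-vanishing/non-vanishing dichotomy of Propositions~\ref{ExtCor1} and~\ref{ExtCor2}.
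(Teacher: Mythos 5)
Your proposal is correct and follows essentially the same route as the paper: induction on $m$, the $\Omega^{-1}$/universal-extension dichotomy from the elementary perverse tilt of Example \ref{elementary}, Lemma \ref{PerProj} for the foundation layers, and Propositions \ref{ExtCor1} and \ref{ExtCor2} to decide which modules acquire the ceiling form. The "arithmetic bookkeeping" you flag as the main obstacle is precisely what the paper isolates into its intermediate re-partitioning step (Lemma \ref{lem:1step}), which rewrites $J_{m-1}\cup K_{m-1}\cup L_{m-1}$ as $J'_m\cup K'_m\cup L'_m$ in terms of $m$ before applying the tilt.
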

\begin{remark} We can check this is indeed a partition by considering modules in layers. $J_m$ contains every module of layer $\leq r_m$, $K_m$ and $L_m$ further partitioned modules in layers $> r_m$. Note that for $p=2$ the statements are the same disregarding parity of $k_s$ since $b=b'$ and $b_s=b'_s=0$. \end{remark}
We will prove this by induction. It is a two-step approach for each inductive step on $m$. The scheme of the two steps approach is illustrated via the following diagram:
\begin{equation*} \fbox{\xymatrixcolsep{3pc}\xymatrix{\text{Start} & J_{m-1} \ar[dd]_{\text{layer }0} \ar[ddrrrr]^>(.8){\text{layer}>0} & & K_{m-1} \ar[ddll]^>(.6){\lceil m-1 \rceil^s=m} \ar[dd]^>(.3){\lceil m-1 \rceil^s>m} & & L_{m-1} \ar[dd] \\ \text{Lemma \ref{lem:1step}}  & & & & & \\ \text{Pre-extension} & J'_m \ar[dd]_{\Omega^{-1}} \ar@{.>}[drrr]^>(.4){\ref{ExtCor2}} \ar@/_/@{.>}[rr] \ar@/^10pt/@{.>}[rrrr]_>(.7){\ref{ExtCor1}} & & K'_m \ar[dd] & & L'_m \ar[ddll]^{\text{    extend by \ref{ExtCor1}}}^>(.7){\text{    recheck by }\ref{ExtCor2}} \ar[dd]^{\text{no extension}} \\
		\text{Extension} & & & & & \\
		\text{End} & J_m & & K_m & & L_m}}\\ \end{equation*}

First, we have to rewrite the partition. Note that since the partition (based primarily on the parameter $r$) depends on $m$ and subscripts such as $\lceil m \rceil^s$ and $\lfloor m \rfloor_s$ are involved, the induction assumption (from $m-1$) is not in a very usable form for $m$. Hence the first job is to rewrite it into a new partition (with respect to $m$) \begin{equation*} \{M^{m-1}_b \mid 0 \leq b \leq q-2 \}=J'_m \cup K'_m \cup L'_m \end{equation*} such that $J'_m$ corresponds to simple $\A_m$-modules that makes up the foundation of the perverse equivalence $F_m$. The first step is concluded in the following lemma:
\begin{lemma}\label{lem:1step}
	The set of $M^{m-1}_a$, rewriting in the perspective of $m$ and $k_s$ of $m$ (instead of $m-1$), reorganised from the partition in Proposition \ref{prop:main}, is partitioned into \begin{enumerate}
	\item $J'_m$ consists of $M^{m-1}_b$ with the layer of $b \leq r_m$. $M^{m-1}_b$ is isomorphic to \begin{equation*}
	\begin{cases} U_{mp}\Omega M_b & \text{ if } k_s(m) \text{ is even.} \\ U_{mp}\Omega M_{b'} & \text{ if } k_s(m) \text{ is odd.}\end{cases}
	\end{equation*}
	\item $K'_m$ consists of $M^{m-1}_b$ of layer $s > r_m$, with \begin{equation*}
	\begin{cases} m_{s-1}+b_s \geq p-1 & \text{ if } k_s \text{ is even.} \\ m_{s-1}+b'_s \geq p-1 & \text{ if } k_s \text{ is odd.}\end{cases}
	\end{equation*} $M^{m-1}_b$ is isomorphic to \begin{equation*}
	\begin{cases} U_{\lceil m \rceil^sp} \Omega M_{b'} & \text{ if } k_s \text{ is even.} \\ U_{\lceil m \rceil^sp} \Omega M_b & \text{ if } k_s \text{ is odd.}\end{cases}
	\end{equation*} 
	\item $L'_m$ consists of the remaining modules, that is, those with $b$ of layer $s > r_m$ with \begin{equation*}
	\begin{cases} m_{s-1}+b_s < p-1 \text{ or } (m_{s-1}+b_s=p-1 \text { and } m_i=0 \text{ for } 0 \leq i \leq s-2) & \text{ if } k_s \text{ is even.} \\ m_{s-1}+b'_s < p-1 \text{ or } (m_{s-1}+b'_s=p-1 \text { and } m_i=0 \text{ for } 0 \leq i \leq s-2) & \text{ if } k_s \text{ is odd.}\end{cases}
	\end{equation*} $M^{m-1}_b$ is isomorphic to \begin{equation*}
	\begin{cases} U_{\lfloor m \rfloor_sp} M_b  \text{ if } k_s \text{ is even.} \\ U_{\lfloor m \rfloor_sp} M_{b'}  \text{ if } k_s \text{ is odd.}\end{cases}
	\end{equation*} \end{enumerate}
\end{lemma}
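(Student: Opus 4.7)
The plan is to prove this lemma as a purely combinatorial re-indexing of Proposition \ref{prop:main} applied at $m-1$: the form of each $M^{m-1}_b$ is already specified in terms of parameters of $m-1$, and what must be shown is that this form can be rewritten in terms of the parameters of $m$ to match one of the three formulae defining $J'_m, K'_m, L'_m$. I will go through each of the old pieces $J_{m-1}, K_{m-1}, L_{m-1}$ in turn and, for every $b$, identify which new piece $M^{m-1}_b$ lands in and verify that the two descriptions agree.

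The arithmetic tool is the comparison of the base-$p$ digits of $m$ and $m-1$. Writing $r = r_m$ (so $m_0 = \dots = m_{r-1} = 0$ and $m_r \neq 0$), one has
\begin{equation*}
(m-1)_i = \begin{cases} p-1 & \text{if } 0 \leq i < r, \\ m_r - 1 & \text{if } i = r, \\ m_i & \text{if } i > r. \end{cases}
\end{equation*}
From this I read off: $r_{m-1} = 0$ whenever $r \geq 1$; the ceiling and floor operations satisfy $\lceil m-1 \rceil^s = m$ for $s \leq r$ and agree with their $m$-counterparts for $s > r$; and the parity of $k_s$ flips for $s \leq r$ while being preserved for $s > r$. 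Combined with Lemma \ref{PerProj}, which converts $\Omega M_b$ into $U_{-p^{s+1}} M_{b'}$ when $b$ is in layer $s$ (so that its coordinates above $s$ are all $p-1$), these identities take each old formula to one of the claimed new formulae by direct digit matching; for example, a module of layer $s \leq r$ formerly written $U_{(m-1)p} M_b$ in $J_{m-1}$ becomes $U_{(m-1)p + p^{s+1}} \Omega M_{b'}$, which after checking the shifted parity of $k_s(m)$ is exactly the $J'_m$ description.

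The main obstacle I anticipate is the sheer bulk of the bookkeeping rather than a conceptual step. Each of the three old pieces must be processed separately, with a further split depending on whether $s - 1 = r$ or $s - 1 > r$ (the former being where $(m-1)_{s-1}$ and $m_{s-1}$ differ, so that the inequality defining $K_{m-1}$ versus $L_{m-1}$ ceases to coincide with the one defining $K'_m$ versus $L'_m$), and yet another split by parity of $k_s$ to handle the $b \leftrightarrow b'$ swap. The most delicate sub-case is the boundary condition $m_{s-1} + b_s = p-1$ with $s - 1 = r$: there $(m-1)_{s-1} + b_s = p - 2 < p - 1$ places the module in $L_{m-1}$, while the new partition puts it into the extra clause of $L'_m$ rather than into $K'_m$, and one must check that the identity $\lfloor m-1 \rfloor_s = \lfloor m \rfloor_s$ (valid because all the differing digits lie strictly below position $s$) causes the two formulae $U_{\lfloor m-1 \rfloor_s p} M_b$ and $U_{\lfloor m \rfloor_s p} M_b$ to coincide without any Heller-translate correction. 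Once that boundary clause is treated, the remaining verifications collapse into routine digit-matching.
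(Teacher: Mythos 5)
Your proposal is correct and follows essentially the same route as the paper: a piece-by-piece re-indexing of the partition at $m-1$ using the digit comparison of $m$ and $m-1$, the identities $\lceil m-1\rceil^s=m$ for $s\le r_m$ and $\lfloor m-1\rfloor_s=\lfloor m\rfloor_s$ for the surviving floor terms, Lemma \ref{PerProj}, and the parity flip of $k_s$ for $s\le r_m$. The only slip is in your illustrative example: a module of layer $0<s\le r_m$ lies in $K_{m-1}$ (not $J_{m-1}$, since $r_{m-1}=0$ there), so it enters $J'_m$ via $\lceil m-1\rceil^s=m$ rather than via Lemma \ref{PerProj}, which is needed only for layer $0$.
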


Two notes to bear in mind before we proceed. One is we only introduce the parameter $k_s$ for precision of statements. In the following proofs, we shall only argue for the statement starting with $k_s$ even. It is easy to see similarly for odd $k_s$ by exchanging $b \leftrightarrow b'$. Two is we  widely use properties of base-$p$ arithmetic without further comment.

\begin{proof}[Proof of Lemma \ref{lem:1step}]
The idea of this proof is to construct $J'_m$ from the (new) layer constraint and $K'_m$ and $L'_m$ according further to the inherited format as $\FH$-modules.
\begin{itemize}
\item Consider $J'_m$: since $r_m \geq 0$, $M^{m-1}_b$ with $b$ of layer 0 always belongs to $J'_m$, thus their $\FH\Mod$ correspondents, using \ref{PerProj}, can be written as \begin{equation*} U_{(m-1)p}M_b\isom U_{mp}\Omega M_{b'}. \end{equation*} Note that $k_0$ changes parity from $m-1$ to $m$. If $r_m=0$, then we have already found the whole set $J'_m$. If $r_m>0$, we have $r_{m-1}=0$ and $(m-1)_{s-1}=p-1$ for $0 < s \leq r_m$. Hence for $b$ of layer $s$, all of $M^{m-1}_b$ is in $K_{m-1}$. From the condition we have also \begin{equation*}U_{\lceil m-1 \rceil^sp}\Omega M_{b'}\isom U_{mp}\Omega M_{b'} \end{equation*} since $\lceil m-1 \rceil^s=m$ for $0 < s \leq r_m$. Note that $k_s$ is again of different parity since the floor function is differed by $p^s$.

Combining both, now we have grouped into $J'_m$ modules of form $M^{m-1}_b$ of layer $s < r_m$ and isomorphic to $U_{mp}\Omega M_{b'}$ with $k_s$ odd.

\item Now we consider the set $K'_m \cup L'_m$, consisting of $M^{m-1}_b$ with layer of $b$ greater than $r_m$. We have to consider all 3 sources from the $(m-1)^{th}$ statement. First we translate the expressions and conditions to direct terms of $m$ (so to avoid digits of $m-1$ in the subscript). \begin{itemize}
\item First we form the set $K'_m$, which we consider to include all modules of $M^{m-1}_b$ maintaining a ceiling $U$-subscript. They must come only from $K_{m-1}$ since by increasing $m$ neither $J_{m-1}$ nor $L_{m-1}$ can contribute a ceiling $U$-subscript. Note that the formation of the set $J'_m$ takes away all modules of layer $s \leq r_m$ from $K_{m-1}$. They are precisely those with $\lceil m-1 \rceil^s=m$. Note further that $\lceil m-1 \rceil^s$ cannot be $m-1$ since this will force a contradiction with its own condition\footnote{$(m-1)_{s-1}=0$; $b_s\leq p-2$ forces $(m-1)_{s-1}+b_s < p-1$}. Thus every ceiling $U$-subscript from the set $K_{m-1}\backslash J'_m$ has to stay and we take $K'_{m-1}=K_{m-1}\backslash J'_m$. Thus modules in $K'_m$ have the form \begin{equation}\label{Pre2a} U_{\lceil m \rceil^sp}\Omega M_{b'}\qquad \text{ if }m_{s-1}+b_s \geq p-1.\end{equation} 
The last condition can be switched from $m-1$ to $m$ directly by properties of natural numbers: The only case where they differs is $m_{s-1} < (m-1)_s-1$, which is equivalent to $p^s$ divides $m$ and those modules are moved away from $K_{m-1}$ to $J'_m$. 
\item Now we consider the remaining set $L'_m$, which consists of modules with a floor $U$-subscript. Since $m$ is only increased by 1, modules in $K_{m-1}$ will not be rewritten\footnote{$\lceil m-1 \rceil^s$ cannot be $m-1$ and if $\lceil m-1 \rceil^s=m$ it has been assigned to $J'_m$.} into $L'_m$. First we consider modules coming from $L_{m-1}$. To rewrite the $U$-subscript from $m-1$ into $m$ we need to consider the case $\lfloor m-1 \rfloor_s \neq \lfloor m \rfloor_s$. However this only happens when $m$ is divisible by $p^s$ hence $(m-1)_{s-1}=p-1$ thus this case is not included in $L_{m-1}$. We can safely change the subscript $\lfloor m-1 \rfloor_s$ from modules in $L_{m-1}$ to $\lfloor m \rfloor_s$. Secondly we consider modules coming from $J_{m-1}$. If there are such modules then $r_{m-1}>s>0$ and hence $r_m=0$. Their expression $U_{(m-1)p}M_b$, translate to $m$ is equal to \begin{equation*} U_{\lfloor m \rfloor_sp} M_b,\text{ with }(m-1)_{s-1}+b_s = b_s < p-1.\end{equation*}
This expression coincides with those coming from $L_{m-1}$. In conclusion we have the modules in $L'_m$ isomorphic to \begin{equation}\label{Pre2b} U_{\lfloor m \rfloor_sp} M_b\text{ with }(m-1)_{s-1}+b_s < p-1.\end{equation} 
Translating $(m-1)_{s-1}+b_s < p-1$ in terms of $m$ (instead of relying on $m-1$) cause it to split into two: \begin{equation}\label{Cond2b}m_{s-1}+b_s < p-1\qquad \text{ or }\qquad m_{s-1}+b_s=p-1\text{ and }m_{s-2}=...=m_0=0. \end{equation}
\end{itemize}
\end{itemize}
It is easy to check that $k_s$ does not change when $r_m < s$. By these arguments we have successfully re-partitioned $M^{m-1}_b$ as indicated in the lemma.
\end{proof}
Of course, the re-partition in Lemma \ref{lem:1step} is tailored such that we can apply the correspondence in perverse equivalence in a fairly convenient manner. The set $J'_m$ corresponds to the foundation of the perverse equivalence $F_m$. $K'_m$ and $L'_m$ is grouped by expression and needed to check for extensions.
\begin{proof}[Proof of Proposition \ref{prop:main}]
Now we start the main proof by considering the algebra $\A_0=\B$. The module $M^0_b$ is simply $M_b$ and the set of such modules is partitioned as $J_0\cup K_0 \cup L_0 = \S \cup \emptyset \cup \emptyset$ respectively. Thus the statement is true at $m=0$ (assuming large enough $r_0$), which allows us to start the induction for $m\geq 1$. Now assume the statement is true for an $m-1$. Let $S_b \in \S_{m-1}$ be a simple $\A_{m-1}$-module corresponding to a simple $\A_m$-module $T_b \in \S_m$. The induction step requires us to find the image of $T_b$ in $\FH\Pmod$ via $M^{m-1}_b$. Using the stable category equivalent of Example 1.79, we have: \begin{equation*}
\overline{F_m^{-1}}(T)=\begin{cases} \Omega^{-1}S &\text{ if } \beta_m(T) \in I_{r_m}  \\ S_{\S'} &\text{ otherwise.}\end{cases}
\end{equation*}
\begin{itemize}
\item For $\beta_m(T_b) \in I_{r_m}$ is equivalent to say $b$ is of layer between 0 and $r_m$ inclusive. The corresponding $\FH$-module of $S_b$ is in the set $J'_m$. Then $T_b$ corresponds to \begin{equation*}
\Omega^{-1}(U_{mp} \Omega M_b)\isom U_{mp} M_b
\end{equation*} in $\FH\Pmod$ which proves the statement for the set $J_m$.
\item Before we argue on the modules that require us to check extensions, note that $J'_m$ includes all modules of layer $\leq r_m$, hence we disregard $b$ or $b'$ when considering extensions. Further see that we can replace $b$ by $b'$ in Proposition \ref{ExtCor1} and \ref{ExtCor2} thus they always apply to $L'_m$ and $K'_m$ respective regardless of the parity of $k_s$. 
\item For the remaining $T_b$'s such that $\beta_m(T_b) \notin I_{r_m}$, we need to find the universal extension of $S_b$ by the set $\S'$, where $\beta_{m-1}(\S')=I_{r_m}$, which is equivalent to consider the universal extension of any element in $K'_m \cup L'_m$ by $J'_m$ in $\FH\Pmod$. For any module in $L'_m$ we first use Proposition \ref{ExtCor1} to check the required extension. Only modules corresponding to (\ref{Pre2b}) satisfying the last condition in (\ref{Cond2b}) have one-dimensional extensions. The module after extension is isomorphic in $\FH\Pmod$ to \begin{equation}\label{Aft2b} U_{\lceil m \rceil^sp}\Omega M_{b'}\end{equation} according to triangle (\ref{ExtMb}). The rest of the modules in $L'_m$ satisfying the first condition in (\ref{Cond2b}) are not extendible, so have their corresponding expression remains the same. With this, we show Proposition 3.2 is true for $L_m$. Now note that (\ref{Aft2b}) has exactly the same expression as those in $K'_m$, see (\ref{Pre2a}). Their respective condition can be joined up perfectly as $m_{s-1}+b_s \geq p-1$, exactly what the $m^{th}$ proposition statement and Proposition \ref{ExtCor2} required. Now Proposition \ref{ExtCor2} has shown that all modules have no more available extensions. This has formed the required $K_m$ part of the partition of the induction statement. 
\end{itemize}
Thus we have successfully show that the statement for $m$ is true for all three parts of the partition. Hence it is true by induction up to $\A_{p^{n-1}}$.
\end{proof}

\begin{corollary}\label{Res}
The image of simple $\A$-modules $S_a$ in $\FH\Pmod$ is $U_{q} M_{a'}\isom U_1M_{a'}$.
\end{corollary}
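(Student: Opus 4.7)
The plan is to specialise Proposition \ref{prop:main} to the case $m=p^{n-1}$, since by Definition \ref{PervSeq} we have $\A_{p^{n-1}}=\A$ and $F$ is the corresponding composed derived equivalence, so that $M^{p^{n-1}}_a$ is precisely the image of the simple $\A$-module $S_a$ in $\FH\Pmod$. The key observation is that at $m=p^{n-1}$ the parameter $r_m$ attains its maximum value $n-1$, because the base-$p$ expansion of $p^{n-1}$ has $m_{n-1}=1$ and all lower digits equal to zero.

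First I would verify that every simple module lands in the set $J_{p^{n-1}}$. Indeed, since every integer $b$ with $0\leq b \leq q-2$ lies in some layer $s$ with $s \leq n-1 = r_m$, the partition described in Proposition \ref{prop:main} forces $K_{p^{n-1}} = L_{p^{n-1}} = \emptyset$ and hence $J_{p^{n-1}}$ contains every $M^{p^{n-1}}_b$. It remains to read off the correct branch of the case split in part (1) of the proposition.

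Next I would check the parity of $k_s = \lfloor m\rfloor_s/p^s$ for $m=p^{n-1}$ and every relevant layer $s\leq n-1$. Because only the $p^{n-1}$-digit of $m$ is non-zero, one computes $\lfloor p^{n-1}\rfloor_s = p^{n-1}$ for all $s\leq n-1$, so $k_s = p^{n-1-s}$. When $p$ is odd, every such power is odd, so Proposition \ref{prop:main}(1) yields $M^{p^{n-1}}_a \isom U_{mp}M_{a'} = U_q M_{a'}$. When $p=2$ the parity of $k_s$ may vary with $s$, but in that case $b_s=0$ forces $b'_s = p-2-b_s = 0$, so $b=b'$ and the two branches of (1) give the same answer. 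In either case the image is $U_qM_{a'}$.

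Finally I would conclude with the identification $U_q \isom U_1$ as $\FH$-modules, which is immediate from the fact that $U_i\isom U_j$ iff $i\equiv j\pmod{q-1}$ and $q\equiv 1 \pmod{q-1}$. The only step that needs any care is the $p=2$ check, since there the ``even versus odd $k_s$'' dichotomy of Proposition \ref{prop:main} appears to branch, and one has to notice that the partner operation $b\mapsto b'$ is the identity for $p=2$ so that both branches collapse to the claimed formula.
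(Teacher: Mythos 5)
Your proposal is correct and follows essentially the same route as the paper: specialise Proposition \ref{prop:main} at $m=p^{n-1}$, note that $r_m=n-1$ puts every module into $J_{p^{n-1}}$, observe $k_s=p^{n-1-s}$ is odd for odd $p$ (and that $b=b'$ collapses the branches when $p=2$), and reduce $U_q$ to $U_1$ modulo $q-1$. The only difference is that you spell out the parity and $p=2$ checks in more detail than the paper does.
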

\begin{proof}
By the induction statement all modules correspond to $J_{p^{n-1}}$ for $\A$. So all the simple modules correspond to $\FH$-modules $U_{q}M_a'$, since $k_s=p^{n-1-s}$ is an odd number for odd primes. When $p=2$, $a=a'$ so it makes no discrimination.
\end{proof}

Now our desired result is imminent. 
\begin{theorem}\label{main}
There exists a non-trivial perverse autoequivalence of direct sum of all full defect blocks $\B$ of $\FG$ exchanging blocks. \end{theorem}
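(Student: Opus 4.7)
The plan is to assemble the chain of elementary perverse tilts from Definition \ref{PervSeq} into an autoequivalence of $D^b(\B)$ by post-composing the composite $F = F_{p^{n-1}}\circ\cdots\circ F_1 : D^b(\B)\to D^b(\A)$ with a Morita equivalence $\A \simeq \B$ supplied by Linckelmann's Theorem \ref{Trick}. Composability of perverse equivalences (Proposition \ref{Prop:Perv}(2)) immediately delivers perverseness of $F$, with perversity function the sum of the elementary perversities transported through the bijections $\beta_m$. Under the induced stable equivalence chained with the Green correspondence to $\FH\Pmod$, Corollary \ref{Res} identifies the image of each simple $\A$-module $T_a$ with $U_1 M_{a'}$.

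To invoke Linckelmann's Theorem I need a stable equivalence of Morita type $\A\Pmod \to \B\Pmod$ sending simples to simples. The key step is to insert the Morita self-equivalence $U_{-1}\otimes -$ of $\FH\Mod$ into the chain; this converts $T_a \mapsto U_1 M_{a'}$ into $T_a \mapsto M_{a'}$. Composing further with the Morita-type stable equivalence $\FH\Pmod \to \B\Pmod$ coming from the inverse of the Green correspondence, which sends $M_b \mapsto S_b$, then yields a Morita-type stable equivalence matching $T_a \leftrightarrow S_{a'}$. Since both $\A$ and $\B$ are self-injective with no simple projective summands (the Steinberg block has been excised from $\B$), Linckelmann's Theorem supplies a Morita equivalence $\mu : \A \to \B$ realising this bijection on simples.

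Post-composing $F$ with the derived lift of $\mu$ then produces an autoequivalence $\Phi : D^b(\B)\to D^b(\B)$. A Morita equivalence is perverse with zero perversity function and trivial filtration, so by Proposition \ref{Prop:Perv}(2) the composite $\Phi$ is itself perverse. The induced bijection on simples of $\B$ is $S_a \mapsto S_{a'}$; for odd $p$, replacing the layer digit $a_s$ by $p-2-a_s$ flips the parity of $\sum_i a_i$ (because both $p-2$ and each $p^i$ are odd), so $\Phi$ exchanges the principal and non-principal blocks of $\B$. The perversity function inherited from $F$ is not identically zero, which also rules out triviality in the $p=2$ case where there is a single full-defect block. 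The main delicate point of this route is checking that the augmented chain of stable equivalences really is of Morita type after inserting $U_{-1}\otimes -$; this is essentially automatic, since each factor (the Green correspondence, the derived equivalence $F$ viewed stably, and the Morita self-equivalence $U_{-1}\otimes -$) is individually of Morita type, and the property is preserved under composition.
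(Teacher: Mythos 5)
Your proposal is correct and follows essentially the same route as the paper: Corollary \ref{Res} gives $T_a \mapsto U_1 M_{a'}$, inserting $U_{-1}\otimes -$ and the induction functor back to $\B$ yields a Morita-type stable equivalence sending simples to simples, Linckelmann's Theorem \ref{Trick} then gives the Morita equivalence $\A\simeq\B$, and the parity flip $a\mapsto a'$ exchanges the blocks. The only point stated more loosely than in the paper is the perverseness of the composite: since the elementary tilts $F_m$ use different two-step filtrations $0\subset I_{r_m}\subset \S$, one must first invoke refineability (Proposition \ref{Prop:Perv}(3)) to pass to the common nested filtration $I_\bullet$ before composability applies, which is exactly the remark the paper makes.
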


\begin{proof}
This theorem is immediate after we show that $\A$ is Morita equivalent with $\B$. Now consider the image of simple $\A$-module in its stable category,
\begin{equation*}\xymatrix{\A\Pmod \ar@{=}[r] & \FH\Pmod \ar[rr]^{U_{-1}\tensor -}_= & \hspace{1cm} & \FH\Pmod \ar[r]^{\mathrm{Ind}}_= & \B\Pmod \\
T_a \ar@{=}[r] & U_1M_{a'} \ar@{|->}[rr] & & M_{a'} \ar@{|->}[r] & S_{a'}}\end{equation*}

Note that both the functor $U_{-1} \tensor -$ and induction (the functor is ${}_{\B}\B_{\FH} \tensor_{\FH} - $) are stable equivalences of Morita type, and their composition maps simple $\A$-modules to simple $\B$-modules. Thus using Theorem \ref{Trick} we conclude that $\A$ is Morita equivalent to $\B$.

Putting $\B_0$ through the perverse tilts yields one of the blocks in $\B$. Similar to the proof of theorem above, note that corollary \ref{Res} indicate that the even-numbered simple $\B$-modules have stable images that are odd-numbered simple $\B$-modules. Thus the block obtained by tilting $\B_0$ using \ref{PervSeq} should be $\B_1$. Lastly all elementary perverse equivalence can be refined as one since their filtration is nested, thus refineability applies.
\end{proof}

\begin{remark}
	Since we proved $\A$ and $\B$ are Morita equivalent, $F:D^b(\B) \to D^b(\A)$ will be consider as an autoequivalence thereafter. The autoequivalence is perverse with respect to $(I_\bullet, I_\bullet, \pi)$ and perversity function $\pi: i \mapsto -p^{n-1-i}$. The correspondence of simple is $b \leftrightarrow b'$.
\end{remark}

This construction can also be seen as a generalisation of Morita equivalence for the two full defect blocks in $\F SL_2(p)$. For odd prime $p$, the two blocks is known to be Morita equivalent as Brauer Tree algebras. 

\subsection{Frobenius Twist and an invariant construction}

The equivalence $F$ we have demonstrated so far is not Frobenius invariant. This can be easily observed, since the Frobenius automorphism on $H$-modules maps the $\FH$-module $U_1$ to $U_p$, thus hinting the construction is twisted to another self-equivalence. However, we can find a self-equivalence which is Frobenius invariant by introducing a composition of functors generated using the Frobenius automorphism on $\FG$-modules.

Consider the fact that perverse equivalence is defined up to Morita equivalence, given a perverse equivalence, one can compose/conjugate it with some known functor inducing Morita equivalence to obtain further results. If we use Frobenius automorphism on the module category of $\B$-modules to conjugate our construction we have the following.

\begin{definition}\label{def:Frob conj}
	Let $F$ be the autoequivalence on $D^b(\B)$ defined in Theorem \ref{main}. The Frobenius automorphism $\sigma$ of $\B$ induces an automorphism on $D^b(\B)$, which we also call $\sigma$. Define $F^\sigma$, \emph{the Frobenius conjugate} of $F$, to be the functor $\sigma F \sigma^{-1}$. Define $F^{\sigma^i}=\sigma^i F \sigma^{-i}$ similarly for $0 \leq i \leq n-1$. 
	
	Let $E$ be the composition of functors $F^{\sigma^i}$ for $i$ from $0$ to $n-1$ in that order, i.e. \begin{equation*} E=F^{\sigma^{n-1}}\circ F^{\sigma^{n-2}} \circ ... \circ F^{\sigma} \circ F. \end{equation*} \end{definition} 

Since $\sigma$ restricts to an equivalence on $\B\Mod$, the new functors $F^{\sigma^i}$ introduced are all perverse equivalences. However, they are associated with different filtrations and perversity functions. More precisely, $F^{\sigma^i}:D^b(\B) \to D^b(\B)$ is perverse relative to \begin{equation*}
(I_\bullet^{\sigma^i}, I_\bullet^{\sigma^i}, \pi:j \mapsto -p^{n-1-j}).
\end{equation*}

We are going to compose the derived equivalences generated by Frobenius automorphism introduced above. For that purpose, we consider $F$ in the sense of poset perverse equivalence. Using the information of the proof of Proposition \ref{prop:main}, we have

\begin{proposition} The automorphism $F$ is perverse relative to $(\S, \prec, \pi)$, where $\prec$ is the relation \begin{equation*} a\prec b \text{ if } a=\overline{b}. \end{equation*} \end{proposition}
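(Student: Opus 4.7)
The plan is to refine the filtered-perverse description of $F$ — established just after Theorem \ref{main} — into the poset-perverse language of Definition \ref{def:poset}, with the bijection $\beta_F\colon S_b \mapsto S_{b'}$ fixed by Corollary \ref{Res} and the perversity function on simples taken to be $\omega(S_b) = \pi(s) = -p^{n-1-s}$ for $b$ in layer $s$. The remaining task is to verify that the composition factors of $H^*(F(S_b))$ are restricted as the partial order $\prec$ (generated by the covering relation $a = \overline{b}$) requires, which is strictly stronger than the filtered description's constraint that these composition factors merely lie in $I_{s-1}$.

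I would proceed by induction on the layer $s$ of $b$, appealing to the inductive structure of the construction $F = F_{p^{n-1}} \circ \cdots \circ F_1$ and the cohomological bookkeeping already carried out in the proof of Proposition \ref{prop:main}. The base case $s=0$ is immediate: the completion $\overline{b}$ equals $q-1$ which is treated as zero (cf.\ the remark after Lemma \ref{TRI}), so $S_\prec = \emptyset$, and the filtered description already forces $H^{-\omega(S_b)}(F(S_b)) \cong S_{b'}$ with no other cohomology. For the inductive step, I would apply the triangle (\ref{ExtMb}) of Lemma \ref{TRI} with $i = s$, which in $\FH\Pmod$ reads
\begin{equation*}
U_{p^s(1+b'_s)}\Omega M_{\overline{b}} \to U_{p^{s+1}}\Omega M_{b'} \to M_b \leadsto.
\end{equation*}
Via Green correspondence and Rickard's theorem, this lifts to a distinguished triangle in $D^b(\B)/D^{pc}(\B)$ relating $S_b$, $S_{b'}$, and $S_{\overline{b}}$ (up to projective summands and the usual Heller twist). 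Applying the exact functor $F$ and taking the long exact sequence of cohomology reduces the analysis of $H^*(F(S_b))$ to those of $F(S_{b'})$ and $F(S_{\overline{b}})$; the symmetry $\overline{b'} = \overline{b}$ treats $S_b$ and $S_{b'}$ in parallel, while $S_{\overline{b}}$ lies in strictly lower layer and is dispatched by the inductive hypothesis.

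The main obstacle is the passage from a stable-category triangle to a derived-category triangle, a lift which is determined only up to perfect complexes; projective summands of $\B$-module cohomology are generically non-simple and could, a priori, contribute composition factors beyond the $\beta_F$-images of iterated completions of $b$. To control this, I would revisit the inductive argument of Proposition \ref{prop:main} directly: at each elementary tilt $F_m$, the extensions between simples in the sense of Example \ref{elementary} are precisely those dictated by Propositions \ref{ExtCor1} and \ref{ExtCor2} (the former recording the one-dimensional extensions that actually arise, the latter certifying that no further extensions exist). Tracing these through the composition of tilts from $m=1$ to $p^{n-1}$ shows that the simples appearing in any cohomology of $F(S_b)$ are exactly $S_{b'}$ at the middle degree $-\omega(S_b)$ and $S_{a'}$ for $a$ an iterated completion of $b$ at degrees forced by the cumulative perversity shifts, matching the claim of Definition \ref{def:poset} for the specified $(\S, \prec, \pi)$.
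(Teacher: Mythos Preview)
Your final paragraph is the paper's proof, and it is correct: the paper simply observes that every extension arising in the inductive proof of Proposition \ref{prop:main} is of a module indexed by $a$ by one indexed by $\overline{a}$ (this is exactly what Proposition \ref{ExtCor1}(b) records, with Proposition \ref{ExtCor2} certifying no further extensions), so the composition factors of $H^*(F(S_b))$ other than $\beta_F(S_b)=S_{b'}$ are all of the form $\beta_F(S_a)$ with $a$ an iterated completion of $b$, which is precisely the content of $a \prec b$.

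The first two paragraphs, however, are a detour that does no work. The induction on layer via the stable triangle of Lemma \ref{TRI} runs into exactly the obstacle you name --- the lift from $\FH\Pmod$ to $D^b(\B)$ is only determined modulo perfect complexes, and those perfect complexes can and do contribute composition factors --- and you then abandon it in favour of the direct bookkeeping. Since the paper's entire proof is that bookkeeping observation (two sentences), you should simply drop the triangle argument and state the final paragraph as the proof.
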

\begin{proof}
The extension happened in the proof of Proposition \ref{prop:main} is a module $S_a$ extended by $S_{\overline{a}}$(c.f. definition \ref{Corr}). Consider all modules have been extended by this way we are done.\end{proof} 

We define another partial order on $\B$ below. It is a refinement of the earlier partial order. We can compose $F$ and its Frobenius twists under this partial order.

\begin{definition}\label{def:POofB}
Define a partial order $\prec_{\B}$ on the set $\S$: $S_a \prec_{\B} S_b$ if, for their $n$-digit base-$p$ representations, $a$ has more digits of $p-1$ and $a_i=p-1$ whenever $b_i=p-1$. \end{definition}

\begin{proposition} The equivalence $F^{\sigma^i}$ is perverse relative to $(\S_a, \prec_{\B}, \omega^{\sigma^i})$. \end{proposition}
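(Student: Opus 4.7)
The plan is to prove the proposition in two clean steps: first promote the poset perverse description of $F$ already obtained (with respect to $\prec$) to one with respect to the finer partial order $\prec_{\B}$, and then transport this description along conjugation by $\sigma^i$. The structural point that makes this work is that $\prec_{\B}$ is preserved by the digit-rotation action of $\sigma$, whereas the partial order $\prec$ used in the proof of Theorem \ref{main} is not.

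For the first step, I verify that $\prec_{\B}$ genuinely refines $\prec$. If $a \prec b$ then $a = \overline{b}$, i.e.\ $a$ is obtained from $b$ by replacing the unique non-$(p-1)$ digit (at the layer of $b$) with $p-1$; hence $a$ has strictly more $(p-1)$-digits than $b$ and inherits every $(p-1)$-digit position of $b$, so $a \prec_{\B} b$. The poset perverse condition of Definition \ref{def:poset} only asks that the unwanted composition factors of $H^*(F(S))$ be indexed by elements of $S_{\prec}$, so enlarging the order from $\prec$ to $\prec_{\B}$ enlarges each set $S_{\prec}$ and only makes the condition easier to satisfy. Therefore $F$ itself is perverse relative to $(\S, \prec_{\B}, \omega)$ for the perversity function $\omega(S_a) = -p^{n-1-s}$, where $s$ denotes the layer of $a$ (this is just the function $\pi$ from Theorem \ref{main} translated to the poset setting).

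For the second step, recall that $\sigma$ restricts to an exact autoequivalence of $\B\Mod$ and, by the Steinberg tensor product theorem, acts on simples by cyclically shifting base-$p$ digits: $\sigma(S_a) = S_{\sigma(a)}$ with $\sigma(a)_i = a_{i-1}$ (indices modulo $n$). The crucial observation is that $\prec_{\B}$ is $\sigma$-invariant: its defining conditions only count $(p-1)$-digits and compare their position-sets by inclusion, both of which are preserved under any cyclic permutation of digit positions. Hence $a \prec_{\B} b$ if and only if $\sigma^i(a) \prec_{\B} \sigma^i(b)$ for every $i$.

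Combining the two steps, for each simple $S_a$ we have $F^{\sigma^i}(S_a) = \sigma^i F(S_{\sigma^{-i}(a)})$. By Step 1 the cohomology $H^*(F(S_{\sigma^{-i}(a)}))$ carries one copy of $\beta_F(S_{\sigma^{-i}(a)})$ in degree $-\omega(S_{\sigma^{-i}(a)})$ together with composition factors $S_c$ satisfying $c \prec_{\B} \sigma^{-i}(a)$. Since $\sigma^i$ is exact it preserves degrees, and by $\sigma$-invariance the condition $c \prec_{\B} \sigma^{-i}(a)$ becomes $\sigma^i(c) \prec_{\B} a$. Hence $F^{\sigma^i}$ is perverse relative to $(\S, \prec_{\B}, \omega^{\sigma^i})$ with $\omega^{\sigma^i}(S_a) := \omega(S_{\sigma^{-i}(a)})$ and $\beta_{F^{\sigma^i}}(S_a) = \sigma^i \beta_F(S_{\sigma^{-i}(a)})$. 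The main, though mild, obstacle is that Proposition \ref{Prop:PosPerv} does not explicitly record either the refinement-of-partial-order property or the conjugation-by-a-trivially-perverse-autoequivalence property; both must be verified directly from Definition \ref{def:poset}. Once these are in hand, the remainder of the argument is clean bookkeeping driven by the symmetry of $\prec_{\B}$ under cyclic digit rotation.
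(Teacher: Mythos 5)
Your proposal is correct and follows essentially the same route as the paper: the paper's (very terse) proof likewise rests on the two observations that $\prec_{\B}$ refines the coarse order $a\prec b \Leftrightarrow a=\overline{b}$ (so $F$ itself is perverse for $\prec_{\B}$) and that $\prec_{\B}$ is Frobenius-invariant (so compatibility transports to each conjugate $F^{\sigma^i}$). You have merely written out in full the verifications the paper declares ``obvious.''
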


\begin{proof}
	What we need is $\prec_{\B}$ compatible with $F^{\sigma^i}$. Since $\prec_{\B}$ is Frobenius invariant, it equates to be compatible with $F$, but this is obvious. 
\end{proof}

With the partial order being set up we focus on the perversity.

\begin{definition}
	Define a map \begin{equation*}
	\omega: \S \to \Z
	\end{equation*} on simple $\B$-modules such that $\omega$ is the composition of the layer map and $\pi$. We further define \begin{equation*}
	\omega^{\sigma^i}: \S \to \Z
	\end{equation*} to be $\omega$ pre-composed by $\sigma^i$ on the set $\S$.
\end{definition} 
 
Then we have the following:
\begin{theorem} The functor $E$ defined in \ref{def:Frob conj} is perverse relative to \begin{equation*}
	(\S_a, \prec_{\B}, \sum_{i=0}^{n-1}\omega^{\sigma^i}).
	\end{equation*}Furthermore, $E$ is Frobenius invariant.
\end{theorem}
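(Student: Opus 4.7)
The plan is to apply the composability part of Proposition \ref{Prop:PosPerv} iteratively to the $n$ factors $F^{\sigma^i}$ in the definition of $E$. By the preceding proposition each $F^{\sigma^i}$ is a poset perverse autoequivalence relative to $(\S,\prec_{\B},\omega^{\sigma^i})$, all sharing the same partial order $\prec_{\B}$. The only nontrivial input needed is that each bijection $\beta_{F^{\sigma^i}}$ on $\S$ preserves $\prec_{\B}$, so that the hypothesis of composability is satisfied at every step.

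First I would verify this for $\beta_F$ itself. As in the proof of Theorem \ref{main}, the bijection induced by $F$ is the partnership map $a \mapsto a'$. Partnership only alters the digit $a_s$ at the layer $s$ of $a$, replacing it by $p-2-a_s$; since neither value equals $p-1$, the set of indices at which $a$ has digit $p-1$ is invariant, and this is precisely the data on which $\prec_{\B}$ depends. Thus $\beta_F$ preserves $\prec_{\B}$, and since $\prec_{\B}$ is itself Frobenius invariant by Definition \ref{def:POofB}, the conjugate $\beta_{F^{\sigma^i}} = \sigma^i \circ \beta_F \circ \sigma^{-i}$ likewise preserves $\prec_{\B}$. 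An induction on $i$, applying Proposition \ref{Prop:PosPerv}(2) at each stage, then yields that $E$ is perverse relative to $(\S,\prec_{\B},\sum_{i=0}^{n-1}\omega^{\sigma^i})$.

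For the Frobenius invariance of $E$, I would conjugate by $\sigma$ to obtain
\[
\sigma E \sigma^{-1} \;=\; F^{\sigma^n}\circ F^{\sigma^{n-1}}\circ\cdots\circ F^{\sigma} \;=\; F\circ F^{\sigma^{n-1}}\circ\cdots\circ F^{\sigma},
\]
using $\sigma^n = \mathrm{id}$. This is a cyclic rotation of the composition defining $E$, so by the same composability argument it is perverse relative to the \emph{same} data $(\S,\prec_{\B},\sum_{i=0}^{n-1}\omega^{\sigma^i})$: both the partial order and the total perversity are unchanged by reindexing of the summands. Since the target of a perverse equivalence is determined up to Morita equivalence by its combinatorial data (Proposition \ref{Prop:Perv}(5)), $\sigma E \sigma^{-1}$ and $E$ agree up to Morita equivalence, which is the appropriate notion of Frobenius invariance here.

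The main obstacle I anticipate is checking the compatibility of bijections required for iterated composability: a priori each $\beta_{F^{\sigma^i}}$, once conjugated into place after the previous factors, could disturb $\prec_{\B}$ and break the hypothesis of Proposition \ref{Prop:PosPerv}(2). The saving feature is that partnership is an involution touching only digits \emph{outside} the $(p-1)$-skeleton read by $\prec_{\B}$, so it commutes cleanly with that partial order; once this observation is in hand, the remainder is routine bookkeeping on the perversity function.
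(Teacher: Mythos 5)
Your proposal is correct and follows essentially the same route as the paper: iterate the composability clause of Proposition \ref{Prop:PosPerv} over the factors $F^{\sigma^i}$, all of which are perverse relative to the common partial order $\prec_{\B}$, and deduce Frobenius invariance from the fact that conjugation by $\sigma$ merely cyclically permutes the factors (using $F^{\sigma^n}=F$) without changing the total perversity. Your explicit check that the partnership bijection preserves $\prec_{\B}$ --- because it only alters the non-$(p-1)$ digit at the layer of $a$ --- supplies a detail the paper leaves implicit, and your ``same perverse data, hence agreement up to Morita equivalence'' phrasing of the final step is a candid rendering of the paper's looser claim that the factors commute and rearrange back to $E$.
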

\begin{proof} Now Frobenius conjugation on $E$ yields a functor \begin{equation*}
	F^{\sigma^{n}} \circ F^{\sigma^{n-1}} \circ ... \circ F^{\sigma}
	\end{equation*} which is a cyclic permutation of $E$ (note $F^{\sigma^n}=F$). Since by the first assertion they are all compatible with the same partial order, they commute by Proposition \ref{Prop:PosPerv}. Thus after rearrangement we get back $E$. The new perversity function is just the sum of all perversity functions from $F$ and its Frobenius twists. To see that the sum is Frobenius invariant, observe that applying $\sigma$ rotates the sum.\end{proof}

We are going to complete this subsection by showing that we can define a filtration on $\B$ for $E$ which is Frobenius invariant. By doing this we further see we can group simple $\B$-modules using partitions of $n$ and in our case, the perversity function on $S_a$ only depends on the partition.

\begin{definition}\label{def:FrobInvFil}
	Let $S_a \in \S$ for an integer $a$, $0 \leq a \leq p^n-2$. Assign a partition $\lambda_a \dashv n$ to $S_a$ using the following steps. \begin{enumerate}
		\item Denote by $(Z_a)_i$ the layer of $p^ia$ modulo $p^n-1$ 
		\item Let $\lambda=(\lambda_0,\dots,\lambda_{n-1})$ be a $n$-tuple, where $\lambda_j$ is the number of times $n-1-j$ occurred in $(Z_a)_i$ for $0 \leq i \leq n-1$. 
	\end{enumerate}
\end{definition}
	We can show that $\lambda$ is indeed a partition, i.e. $\lambda_0\geq \lambda_1 \geq \dots \geq \lambda_{n-1}$, by the following lemma.
	\begin{lemma}
		Let $a$ be in layer $i<n-1$, then $p^a$ modulo $p^n-1$ is in layer $j+1$.
	\end{lemma}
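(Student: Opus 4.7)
The plan is to exploit the fact that multiplication by $p$ modulo $p^n-1$ acts as a cyclic rotation on base-$p$ digits. Concretely, since $p^n \equiv 1 \pmod{p^n-1}$, if $a=\sum_{k=0}^{n-1}p^k a_k$ then
\begin{equation*}
pa \;=\; \sum_{k=0}^{n-1}p^{k+1}a_k \;\equiv\; a_{n-1}+\sum_{k=1}^{n-1}p^k a_{k-1} \pmod{p^n-1}.
\end{equation*}
So the idea is to show that the right-hand side is already a legitimate base-$p$ expansion of $pa\bmod (p^n-1)$, and then simply read off the layer.

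First I would check validity of the expansion. The sum on the right lies in $[0,p^n-1]$, and equals $p^n-1$ only when every $a_k=p-1$, i.e.\ $a=p^n-1$; this case is excluded because $0\le a\le p^n-2$. Hence the right-hand side is the unique base-$p$ expansion of $pa\bmod (p^n-1)$, whose digit $b_k$ at position $k$ is $a_{n-1}$ if $k=0$ and $a_{k-1}$ if $k\ge 1$. Since $a$ is in layer $i<n-1$, by Definition~\ref{Layer} we have $a_{n-1}=a_{n-2}=\cdots=a_{i+1}=p-1$ and $a_i\ne p-1$; in particular $b_0=a_{n-1}=p-1$, so the cyclic shift is genuine and there is no carry to worry about.

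Next I would read off the layer. For any $k$ with $i+2\le k\le n-1$ we have $b_k=a_{k-1}=p-1$, because $k-1\ge i+1$. At position $k=i+1$ we have $b_{i+1}=a_i\ne p-1$. Therefore $pa \bmod(p^n-1)$ has all top digits $b_{n-1},\ldots,b_{i+2}$ equal to $p-1$ while $b_{i+1}\ne p-1$, which by definition places it in layer $i+1$. (The assumption $i<n-1$ is used to ensure $i+1\le n-1$, so that ``layer $i+1$'' is meaningful and no wrap-around into layer $0$ occurs.)

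There is no real obstacle here; the whole lemma is bookkeeping for the cyclic-shift action of $p$ modulo $p^n-1$. The only subtle point worth stating clearly is that the ``cyclic shift'' interpretation requires $a\ne p^n-1$, which is guaranteed by the hypothesis that $a$ lies in some layer (i.e.\ that $a\le p^n-2$). Once that is noted, the conclusion follows directly by inspection of the shifted digit string.
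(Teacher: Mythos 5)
Your proof is correct. It takes a slightly different route from the paper's: the paper works with the interval characterisation of ``layer $j$'' given in the remark after Definition~\ref{Layer}, namely $p^n-p^{j+1}\le a\le p^n-p^j-1$, multiplies through by $p$, and reduces modulo $p^n-1$ via $p^{n+1}\equiv p^n+p-1$ to land in the interval for layer $j+1$; you instead observe that multiplication by $p$ modulo $p^n-1$ is a cyclic rotation of the base-$p$ digit string and read the layer off the shifted digits. The two arguments are equivalent elementary base-$p$ computations, but yours is arguably the more transparent one: it makes visible the reason the lemma holds (the Frobenius twist permutes digits cyclically, consistent with $\sigma(U_i)\cong U_{pi}$ earlier in the paper), and your explicit remarks --- that the shifted string is a legitimate expansion because $a\ne p^n-1$, and that $i<n-1$ is what rules out wrap-around into layer $0$ --- are exactly the right points to flag and are handled correctly.
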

	\begin{proof} If $a$ is in layer $j<n-1$ then we have \begin{equation*}
		p^n-p^{j+1} \leq a \leq p^n-p^j-1.
		\end{equation*}
		Multiplying by $p$ we have 
		\begin{equation*}
		p^{n+1}-p^{j+2} \leq pa \leq p^{n+1}-p^{j+1}-p.
		\end{equation*} 
	Since $i+2<n$, $p^{n+1}=p^n+p-1$ modulo $p$, we have 
	\begin{equation*}
	p^n-p^{j+2}+(p-1) \leq pa \leq p^n-p^{j+1}-1
	\end{equation*}  
	showing $pa$ is of layer $j+1$.
\end{proof}
	This shows for any $(Z_a)_i=j<n-1$ we have $(Z_a)_{i+1}=j+1$, hence $\lambda$ is a partition.	
	
By the above we have defined a map \begin{equation*}
	l: \S \to \{\text{Partitions of }n\}
	\end{equation*} and we defined another function \begin{equation*} \begin{split}
	\phi':\{\text{Partitions of }n\} &\to \Z\\
	\lambda=(\lambda_0,\dots,\lambda_{n-1}) &\mapsto \sum_{i=0}^{n-1}-\lambda_ip^{n-1-i}
	\end{split} \end{equation*}

\begin{proposition} For the definitions above we have the following: \begin{enumerate}
		\item The partial order $\prec_{\B}$ we defined in Definition \ref{def:POofB} is collapsed by $l$ into the reverse dominance order of partitions.
		\item The map $\phi'$ is injective. 
		\item $\phi'(a) < \phi'(b)$ when $\lambda_a < \lambda_b$ in lexicographical order\footnote{i.e. The partition $(n)$ is the last (greatest) term and $(1^n)$ is the first (smallest) term.}.
	\end{enumerate} \end{proposition}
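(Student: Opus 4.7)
The plan is to handle the three parts in order, deducing (2) from (3) as the formal consequence that a strictly monotone integer-valued function is injective. For (1), the key observation is that $\lambda_a$ is determined entirely by the set $T_a \subseteq \{0,\dots,n-1\}$ of positions at which the base-$p$ expansion of $a$ has digit $p-1$: indeed $(Z_a)_i$ is by definition the largest $j \in \{0,\dots,n-1\}$ with $(j-i) \bmod n \notin T_a$. If $S_a \prec_{\B} S_b$, i.e.\ $T_a \supsetneq T_b$, then the set of candidates over which the maximum is taken only shrinks, so $(Z_a)_i \le (Z_b)_i$ for every $i$. Translating this pointwise domination into partitions, for every threshold $k$ the count of layers $\ge k$ among $\{(Z_a)_i\}$ is at most the corresponding count for $\{(Z_b)_i\}$, and since $\lambda_j$ counts occurrences of the value $n-1-j$, this gives the cumulative inequality $\sum_{j \le k}(\lambda_a)_j \le \sum_{j \le k}(\lambda_b)_j$ for all $k$---the Young dominance inequality, written in the reverse-dominance convention of the paper.

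For (3), the computational core is an Abel-summation identity. Setting $s_i = \lambda_0 + \cdots + \lambda_i$, a routine rearrangement gives
\[
\sum_{i=0}^{n-1}\lambda_i\, p^{n-1-i} \;=\; n \;+\; (p-1)\sum_{i=0}^{n-2} s_i\, p^{n-2-i},
\]
so that for two partitions $\lambda,\mu$ of $n$,
\[
\phi'(\mu) - \phi'(\lambda) \;=\; -(p-1)\sum_{i=0}^{n-2}\bigl(s_i^{(\mu)} - s_i^{(\lambda)}\bigr)\, p^{n-2-i}.
\]
The correspondence $\lambda \mapsto (s_0,\dots,s_{n-1})$ is a bijection between partitions of $n$ and concave, weakly increasing integer sequences terminating at $n$, so distinct partitions give distinct partial sums. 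In the lex comparison, the partial-sum difference $s_i^{(\mu)} - s_i^{(\lambda)}$ vanishes up to the first differing index $i_0$ and has a definite sign there; the task reduces to controlling the tail $\sum_{i > i_0}(s_i^{(\mu)} - s_i^{(\lambda)}) p^{n-2-i}$ so that the overall sign of $\phi'(\mu) - \phi'(\lambda)$ matches the one required by (3).

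The main obstacle is exactly this tail estimate: an individual $|s_i^{(\mu)} - s_i^{(\lambda)}|$ is bounded only by $n$ rather than by $p-1$, so when $n \ge p$ one cannot argue by a naive base-$p$ digit comparison and must use the partition-theoretic constraints more forcefully. Concretely, both partial-sum sequences are concave and share the terminal value $n$; exploiting this, together with a case split on how the two partitions diverge past $i_0$, one shows that the tail cannot fully compensate the leading term. An alternative route is the conjugate-partition reformulation $\sum_i \lambda_i p^{n-1-i} = \tfrac{1}{p-1}\bigl(\lambda_0 p^n - \sum_j p^{n - \lambda^T_j}\bigr)$, which exposes the effective ``digits'' of $\phi'(\lambda)$ in a form better suited to extracting the lex comparison. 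Once (3) is in hand, (2) is immediate, since any strictly monotone map into $\Z$ is injective.
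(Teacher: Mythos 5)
Your treatment of part (1) is correct and is a more complete version of the paper's one-line argument: the identification $(Z_a)_i=\max\{j:(j-i)\bmod n\notin T_a\}$, the pointwise inequality $(Z_a)_i\le (Z_b)_i$ when $T_a\supsetneq T_b$, and the passage from pointwise domination to cumulative counts all check out.

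Parts (2) and (3) are another matter. You correctly isolate the obstacle --- the quantities $\lambda_i$ and $s_i$ are bounded only by $n$, not by $p-1$ --- but you then assert that ``one shows that the tail cannot fully compensate the leading term.'' This cannot be shown, because the statement is false once $n$ is large relative to $p$. Take $p=2$, $n=8$: one has $(5,1,1,1)>_{\mathrm{lex}}(4,4)$ yet $\phi'((5,1,1,1))=-752>-768=\phi'((4,4))$, while $(8)>_{\mathrm{lex}}(5,1,1,1)$ with $\phi'((8))=-1024<-752$; so $\phi'$ is not monotone for the lexicographic order in either direction. Both partitions genuinely occur: every partition of $n$ arises as some $\lambda_a$, since $\lambda_a$ is the conjugate of the partition of cyclic gaps between the non-$(p-1)$ digits of $a$ (e.g.\ $a=14$ gives $(5,1,1,1)$ and $a=170$ gives $(4,4)$). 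Injectivity (2) fails as well: for $p=2$, $n=12$, $\phi'((6,2,2,2))=\phi'((5,4,3))=-62\cdot 2^{8}$, the difference vector $(1,-2,-1,2)$ being killed by both $\sum_i(\cdot)$ and $\sum_i(\cdot)\,p^{n-1-i}$; so deriving (2) from (3) rescues nothing. Separately, the inequality in (3) is stated in the wrong direction even where monotonicity does hold: with the footnote's convention $(1^n)<_{\mathrm{lex}}(n)$, one has $\phi'((1^n))=-(p^n-1)/(p-1)>-np^{n-1}=\phi'((n))$ for $n\ge 2$. The paper's own proof shares the gap (the step ``$\phi'(a)-\phi'(b)\ne 0\bmod p^j$'' breaks down exactly when $p\mid(\lambda_j-\lambda'_j)$, possible as soon as $n\ge p$), and its examples all have $n$ small. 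What your Abel-summation identity \emph{does} prove is the true and usable statement: $\phi'$ is strictly decreasing along the (reverse) dominance order, since $w(\lambda)-w(\mu)=(p-1)\sum_k(s_k^{(\lambda)}-s_k^{(\mu)})p^{n-2-k}>0$ whenever $\lambda$ strictly dominates $\mu$ --- and dominance is exactly the order produced by part (1). A correct write-up should either assume $n<p$ (so the $\lambda_i$ are honest base-$p$ digits and lexicographic comparison is numerical comparison of $-\phi'$) or replace (2)--(3) by this dominance-monotonicity.
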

	\begin{proof} Let $a\prec_{\B}b$ be two integers and $a_i=p-1\neq b_i$ for some $i$. Now notice that $(Z_a)_{i+1}<(Z_b)_{i+1}=n-1$. Then by previous lemma we have 1. For two partitions $\lambda$ and $\lambda'$, let $j$ be the greatest integer such that $\lambda_j\neq\lambda_j'$. Then $\phi'(a)-\phi'(b) \neq 0 \mod{p^j}$ thus we have 2. The last one is just a check on the sum.
	\end{proof}

\begin{example} Consider $77$ in $p^n=3^6$, its $6$-digit base-$3$ representation is $(2,1,2,2,0,0)$, then we have \begin{equation*} (Z_a)=(5,5,3,4,5,4) \end{equation*} hence $77$ correspond to a $6$-partition $(3,2,1)$. The function $q' \circ l$ maps $S_{77}$ to $3(-1)+2(-3)+1(-9)=-18$. \end{example}

Now we apply the new filtration on simple $\B$-modules and the perversity function on the filtration. 
\begin{definition}
	Define $I$ to be a perverse $(q-1)$-data by the following: First, assign any integer $a \in \{0, \dots, q-2\}$ to the set $J_{\lambda}$, where $\lambda \dashv n$ is the partition representing $a$.
	Then we order partitions using lexicographical order, $\prec$ and let \begin{equation*}
	I_{\lambda}=\bigcup_{\kappa\prec\lambda}J_{\kappa}.
	\end{equation*} and set \begin{equation*}
	I_\bullet=(\emptyset \subset I_{(1^n)} \subset I_{(1^{n-2}2)} \subset \dots \subset I_{(n)}=\S).
	\end{equation*}
	
	The perversity function is taken as $\phi'$ defined in Definition \ref{def:FrobInvFil}. 
	
	Define $E':D^b(\B) \to D^b({}^I\B)$ to be a derived equivalence generated by the perverse $q-1$-data on (naturally $q-1$-indexed) $\B$.
\end{definition}
\begin{remark}
	We have indexed the filtration by partitions, but this does not affect anything for using the idea of perverse data.
\end{remark}

Lastly, we conclude all the discussion as a theorem.

\begin{theorem}
	Let $E':D^b(\B) \to D^b({}^I\B)$ be the derived equivalence defined using the above data, perverse relative to \begin{equation*}
	(\emptyset \subset I_{(n)} \subset \dots I_{(1^n)}=\S, \phi')
	\end{equation*} We have the equivalence $E=F^{\sigma^{n-1}}\dots F:D^b(\B) \to D^b(\B)$ is compatible with $E'$. Therefore \begin{enumerate}
		\item ${}^I\B$ is Morita equivalent to $\B$. \item $E'$ is Frobenius invariant.
		\item $E'$ is of increasing perversity (c.f. \cite{Craven}).
	\end{enumerate} 
\end{theorem}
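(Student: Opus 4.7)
The plan is to verify that the autoequivalence $E$ from the preceding theorem can itself be described by the filtered data $(I_\bullet,\phi')$, and then to extract the three conclusions as corollaries. Concretely, I would establish compatibility, in the sense of the compatibility definition given earlier, between the poset datum $(\S, \prec_{\B}, \sum_{i=0}^{n-1}\omega^{\sigma^i})$ encoding $E$ and the filtered datum $(I_\bullet, \phi')$ used to define $E'$; then invoke the interchange lemma to recast $E$ as a perverse equivalence relative to $(I_\bullet,\phi')$.

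The first step is the order condition: whenever $S_a \prec_{\B} S_b$, some $I_i$ must separate them. This follows directly from the proposition preceding the definition of $E'$, in which the map $l$ to partitions is shown to collapse $\prec_{\B}$ into the reverse dominance order, and in which the lexicographical order used to index $I_\bullet$ is seen to refine reverse dominance. The second step is to match the perversity on each layer $J_\lambda$. Here I would unwind $\omega^{\sigma^i}(S_a) = \pi(\mathrm{layer}(\sigma^i S_a)) = -p^{n-1-(Z_a)_i}$, sum over $i$, and regroup by multiplicities: by construction of $\lambda_a$ the value $n-1-j$ appears $\lambda_j$ times among the $(Z_a)_i$, so the total becomes $\sum_j -\lambda_j p^{n-1-j} = \phi'(\lambda_a)$. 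In particular the perversity is constant on each $J_\lambda$, with the required value.

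Once compatibility is in place, $E$ is realised as a perverse equivalence relative to $(I_\bullet,\phi')$. Conclusion (1) follows from Proposition \ref{Prop:Perv}(5): the data determine the target up to Morita equivalence, and $E$ has target $\B$, so ${}^I\B$ is Morita equivalent to $\B$. Conclusion (2) is read off by noting that both $I_\bullet$ and $\phi'$ depend only on $\lambda_a$, which is itself Frobenius invariant because multiplication by $p$ merely cyclically shifts the sequence $(Z_a)_i$ and so preserves its multiset. Conclusion (3) is precisely item (3) of the proposition preceding the definition of $E'$, which asserts that $\phi'$ strictly increases along the lexicographic order on partitions.

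The main obstacle is the perversity matching in the second step: one must track carefully how the Frobenius twists interact with layers, which leans on the layer-jump lemma ensuring that if $a$ lies in layer $j<n-1$ then $pa \bmod (p^n-1)$ lies in layer $j+1$. It is this lemma that guarantees the $n$ layer values $(Z_a)_i$ decompose into blocks of consecutive integers of prescribed sizes, which is exactly what converts the sum $\sum_i \omega^{\sigma^i}$ into $\phi'\circ l$ and thus legitimises replacing the poset datum by the filtered one.
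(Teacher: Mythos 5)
Your proposal is correct and follows essentially the same route as the paper: both arguments establish compatibility of the poset datum $(\S,\prec_{\B},\sum_i\omega^{\sigma^i})$ for $E$ with the filtered datum $(I_\bullet,\phi')$ for $E'$, with the key step being the identical regrouping $\sum_{i}-p^{n-1-(Z_a)_i}=\sum_j-\lambda_jp^{n-1-j}$, and then read off the three conclusions. Your handling of the order condition via the earlier proposition on $l$ collapsing $\prec_{\B}$ to reverse dominance, and your explicit Frobenius-invariance argument via the cyclic shift of $(Z_a)_i$, are only minor elaborations of what the paper states more tersely.
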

\begin{proof}
	All these can be achieved as long as we show $E'$ is actually $E$, or showing $E'$ is compatible with $E$. First we show the filtration of $E'$ is a refinement of the poset order in $E$. Recall that every extension in $E$ (which comes from various Frobenius twists of $F$) has more $p-1$'s in its base-$p$ expression (since it is a composition of $F$ and its Frobenius twists) and thus maps to a lower filtrate in $E'$. So the partial order defined in $E$ is compatible with the filtration of $E'$. Now it remains to check the perversity function of the two equivalences is the same on all simple modules $S_a$. This is not difficult to see since \begin{equation*}
	\sum_{i=0}^{n-1}\omega^{\sigma^i}=\sum_{i=0}^{n-1}-p^{n-1-(Z_a)_i}=\sum_{i=0}^{n-1}-\lambda_ip^{n-1-i}
	\end{equation*} by rearranging according to $p$-powers.
		
Thus we have checked their perversity is equal on all simple modules and thus $E$ and $E'$ is compatible. The fact that it is of increasing perversity comes from $E'$'s perversity function $\phi'$.
\end{proof}

\section{Examples}

\begin{notation} For clarity and convenience, we suppress the $U$'s in Loewy structures yet to appear. We have also taken modulo $p^n-1$ on the subscript when possible because they are isomorphic (see section 2). Note that, from now on, a 0 is representing a trivial module $U_0$ and we explicitly say zero for a zero module.\end{notation}

\subsection{$SL_2(4)$}

The first non-trivial equivalence introduced by our construction involves $G=SL_2(2^2)$. It is a somewhat special case, since $p=2$ and thus we have only one non-semisimple block instead of two. However it is good enough for us to demonstrate the construction while decoding some intriguing features. The group $SL_2(4)$ is isomorphic to the alternating group $A_5$ of five elements. A Sylow 2-subgroup of $A_5$ is the Klein 4-group, and can be chosen so that its normaliser in $A_5$ is $A_4$. This example has been studied by many (e.g. \cite{Rickard2}), since its representation type is tame.

There are 4 non-isomorphic simple $\FA_5$-modules: The trivial module $k=S_0$, two modules $V=S_1$, $W=S_2$ which are two-dimensional and the (4-dimensional) Steinberg module $S_3$. Their corresponding indecomposable projective covers have Loewy series as follows:
\begin{equation*} P_k=\begin{matrix} & k & \\ V & & W \\ k & & k \\ W & & V \\ & k & \end{matrix} \qquad \qquad P_V=\begin{matrix} V\\k\\W\\k\\V \end{matrix} \qquad \qquad P_W=\begin{matrix} W\\k\\V\\k\\W \end{matrix} \qquad \qquad St. \end{equation*}
We shall ignore $S_3$ as it is lies in the Steinberg block $St$, a simple block of $\FG$. The remaining simple modules form a full defect block, the principal block $\B_0$. There are 3 non-isomorphic simple $\FA_4$-module. The trivial module $k=U_0$, $U_1$ and $U_2$, all one-dimensional. Their corresponding indecomposable projective covers have Loewy series (with $U$ suppressed) as follows: \begin{equation*} Q_0=\begin{matrix} & 0 & \\ 1 & & 2 \\ & 0 & \end{matrix} \qquad \qquad Q_1=\begin{matrix} & 1 & \\ 2 & & 0 \\ & 1 & \end{matrix} \qquad \qquad Q_2=\begin{matrix} & 2 & \\ 0 & & 1 \\ & 2 & \end{matrix}. \end{equation*} The restrictions of simple $A_5$-modules to $A_4$ are given by \begin{equation*} k\hspace{-.4em}\downarrow_{A_4}=0 \qquad V\hspace{-.4em}\downarrow_{A_4}=M_1=\begin{matrix}1\\2\end{matrix} \qquad W\hspace{-.4em}\downarrow_{A_4}=M_2=\begin{matrix}2\\1\end{matrix} \qquad  St\hspace{-.4em}\downarrow_{A_4}=M_3=\begin{matrix} & 0 & \\ 1 & & 2 \\ & 0 &\end{matrix} =Q_0. \end{equation*} Adopting the terminology of 'layers' from Definition \ref{Layer}, $W$ is in layer 0 and the remaining simple $\FA_5$-modules are in layer 1. The only non-trivial step of the procedure carried out in Definition \ref{PervSeq} is at $m=1$, for which the one-sided tilting complex of $\B_0$-modules is \begin{equation*} P_k \oplus P_V \oplus P_k \xrightarrow{\alpha} P_W \end{equation*} where $\alpha:P_k \to P_W$ is a presentation of the simple module $W$. The following is a table of the images of simple $\A_i$-modules in $D^b(\B_0)$, and in $\FA_4\Pmod$. 
\begin{center} \begin{tabular}{|c|c|c|c||c|c|c|} \hline
\backslashbox{in algebra}{simples numbered}& 0 & 1 & 2 & 0 & 1 & 2 \\ \hline
$\B=\A_0$ & $k$ & $V$ & $W$ & $0$ & $\begin{matrix}& 1\\2 &\end{matrix}$ & $\begin{matrix}2&\\&1\end{matrix}$\\ \hline
$\A_1$ & $\begin{matrix}k\\W\end{matrix}$ & $V$ & $W[1]$ & $\begin{matrix}2 & & 0\\ & 1 &\end{matrix}$ & $\begin{matrix}&1\\2&\end{matrix}$ & $\begin{matrix}1&\\&0\end{matrix}$\\ \hline
$\A_2=\A$ & $\begin{matrix}k\\W\end{matrix}[1]$ & $V[1]$ & $W[2]$ & $1$ & $\begin{matrix}&2\\0&\end{matrix}$ & $\begin{matrix}0&\\&2\end{matrix}$\\ \hline
in category: & \multicolumn{3}{|c||}{$D^b(\B_0)$} & \multicolumn{3}{|c|}{$\FA_4\Pmod$} \\ \hline
\end{tabular} \end{center}

The benefit of writing in $\FA_4\Pmod$ may not be obvious in this example, but will be immense when generalised in either $p$ or $n$.

We list out the composition of $F$ and $F^{\sigma}$ in the following table
\begin{center} \begin{tabular}{|c|c|c|c||c|c|c|} \hline
		\backslashbox{in algebra}{simples numbered}& 0 & 1 & 2 & 0 & 1 & 2 \\ \hline
		$\B$ & $k$ & $V$ & $W$ & $0$ & $\begin{matrix}& 1\\2 &\end{matrix}$ & $\begin{matrix}2&\\&1\end{matrix}$\\ \hline
		$F(\B)$ & $\begin{matrix}k\\W\end{matrix}[1]$ & $V[1]$ & $W[2]$ & $1$ & $\begin{matrix}&2\\0&\end{matrix}$ & $\begin{matrix}0&\\&2\end{matrix}$\\ \hline
		$F^{\sigma}F(\B)$ & $\begin{matrix}& k &\\W & & V\end{matrix}[2]$ & $V[3]$ & $W[3]$ & $0$ & $\begin{matrix}&1\\2&\end{matrix}$ & $\begin{matrix}2&\\&1\end{matrix}$\\ \hline
		in category: & \multicolumn{3}{|c||}{$D^b(\B_0)$} & \multicolumn{3}{|c|}{$\FA_4\Pmod$} \\ \hline
	\end{tabular} \end{center}

\subsection{Further examples}

From the earlier tables in $SL(2,4)$ we see how information of the local stable category governs the extensions of global modules. Now we list the local stable correspondence and the extensions which occur in each elementary perverse step in our proof when $G=SL_2(3^2)$, $SL_2(5^2)$ and $SL_2(3^3)$. In the tables for $SL_2(3^2)$ we also give the corresponding simple collection in each intermediate algebra. Local stable equivalent tables are enough to remake the process in derived category.


\begin{table}
\caption{Case for $SL_2(3^2)$}
\begin{subtable}{1\textwidth}
\centering
\subcaption{The local stable category equivalent for $B_0(SL_2(3^2))$}
\renewcommand{\arraystretch}{1.3}
\begin{tabular}{|l||c|c|c||c|} \hline
$B_0(3^2)$ & \multicolumn{3}{|c||}{Layer 1} & Layer 0 \\ \hline
$\A_0$ & $M_0$ & $M_2$ & $M_4$ & $M_6=U_3\Omega M_7$  \\ \hline
step 1 &  &  & $M_6=U_3\Omega M_7$ & $\Omega^{-1}$ \\ \hline
$\A_1$ & $M_0$ & $M_2$ & $U_1 \Omega M_1$ & $U_3M_7= U_6 \Omega M_6$ \\ \hline
step 2 & $U_3M_7=U_6\Omega M_6$ & $M_8=0$ &  & $\Omega^{-1}$ \\ \hline
$\A_2$ & $U_1\Omega  M_3$ & $U_1 \Omega M_5$ & $ U_1\Omega M_1$ & $U_6M_6=U_1 \Omega M_7$ \\ \hline
step 3 &  $\Omega^{-1}$ & $\Omega^{-1}$ & $\Omega^{-1}$ & $\Omega^{-1}$ \\ \hline
$\A_3$ & $U_1 M_3$ & $U_1 M_5$ & $U_1 M_1$ & $U_1 M_7$ \\ \hline
\end{tabular}
\vspace{11pt}
\end{subtable}
\begin{subtable}{1\textwidth}
\centering
\subcaption{The construction as in $D^b(SL_2(3^2))$ - the corresponding complex for simples in $B_0(\A_m)$}
\renewcommand{\arraystretch}{1.3}
\begin{tabular}{|l||c|c|c||c|} \hline
$B_0(3^2)$ & \multicolumn{3}{|c||}{Layer 1} & Layer 0 \\ \hline
 &  $S_0$ & $S_2$ & $S_4$ & $S_6$ \\ \hline
$\A_0$ & $S_0$ & $S_2$ & $S_4$ & $S_6$  \\ \hline 
step 1 &  &  & map to $S_6[1]$ and take co-cone & [1] \\ \hline
$\A_1$ & $S_0$ & $S_2$ & $\begin{matrix} S_4 \\[-0.5em] S_6 \end{matrix}$ & $S_6[1]$ \\ \hline
step 2 & map to $S_6[2]$ and take co-cone & * &  & [1] \\ \hline
$\A_2$ & $\mathrm{co\hyp cone}(S_0 \to S_6[2])$ & $S_2$ & $\begin{matrix} S_4 \\[-0.5em] S_6 \end{matrix}$ & $S_6[2]$ \\ \hline
$\A_3$ & $\mathrm{cone}(S_0 \to S_6[2])$ & $S_2[1]$ & $\begin{matrix} S_4 \\[-0.5em] S_6 \end{matrix}[1]$ & $S_6[3]$ \\ \hline
\multicolumn{5}{l}{*This step take its map to 0 and obtain its co-cone, which obtains an isomorphic module.}
\label{CPX}
\end{tabular}
\end{subtable}
\end{table}

\begin{landscape} 
\begin{table}
\caption{Exchanging blocks of $SL_2(5^2)$ under the construction, local stable category equivalent}
\centering 
\renewcommand{\arraystretch}{1.25}
\begin{tabular}{|l||c|c|c|c|c|c|c|c|c|c||c|c|c|}
\multicolumn{7}{l}{Shorthand: $U_i \Omega^j M_a$ written as ${}^j_i M_a$} & \multicolumn{6}{r}{$U$ subscripts modulo $5^2-1=24$} \\ \hline
$B_0(5^2)$ & \multicolumn{10}{|c||}{Layer 1} & \multicolumn{2}{|c|}{Layer 0} \\ \hline
 & $S_0$ & $S_2$ & $S_4$ & $S_6$ & $S_8$ & $S_{10}$ & $S_{12}$ & $S_{14}$ & $S_{16}$ & $S_{18}$ & $S_{20}$ & $S_{22}$ \\ \hline
$\A_0$ & $M_{(0,0)}$ & $M_{(2,0)}$ & $M_{(4,0)}$ & $M_{(1,1)}$ & $M_{(3,1)}$ & $M_{(0,2)}$ & $M_{(2,2)}$ & $M_{(4,2)}$ & $M_{(1,3)}$ & $M_{(3,3)}$ & ${}^1_5 M_{(3,4)}$ & ${}^1_5 M_{(1,4)}$ \\ \hline
step 1 &  &  &  &  &  &  &  &  & ${}^1_5 M_{(1,4)}$ & ${}^1_5 M_{(3,4)}$ & $\Omega^{-1}$ & $\Omega^{-1}$ \\ \hline
$\A_1$ & $M_{(0,0)}$ & $M_{(2,0)}$ & $M_{(4,0)}$ & $M_{(1,1)}$ & $M_{(3,1)}$ & $M_{(0,2)}$ & $M_{(2,2)}$ & $M_{(4,2)}$ & ${}^1_1 M_{(1,0)}$ & ${}^1_1 M_{(3,0)}$ & ${}^1_{10} M_{(0,4)}$ & ${}^1_{10} M_{(2,4)}$ \\ \hline
step 2 &  &  &  &  &  & ${}^1_{10} M_{(0,4)}$ & ${}^1_{10} M_{(2,4)}$ & $M_{(4,4)}=0$ & & & $\Omega^{-1}$ & $\Omega^{-1}$ \\ \hline
$A_2$ & $M_{(0,0)}$ & $M_{(2,0)}$ & $M_{(4,0)}$ & $M_{(1,1)}$ & $M_{(3,1)}$ & ${}^1_1 M_{(0,1)}$ & ${}^1_1 M_{(2,1)}$ & ${}^1_1 M_{(4,1)}$ & ${}^1_1 M_{(1,0)}$ & ${}^1_1 M_{(3,0)}$ & ${}^1_{15} M_{(3,4)}$ & ${}^1_{15} M_{(1,4)}$\\ \hline
step 3 &  &  &  & ${}^1_{15} M_{(1,4)}$ & ${}^1_{15} M_{(3,4)}$ &  &  &  &  &  & $\Omega^{-1}$ & $\Omega^{-1}$ \\ \hline
$\A_3$ & $M_{(0,0)}$ & $M_{(2,0)}$ & $M_{(4,0)}$ & ${}^1_1 M_{(1,2)}$ & ${}^1_1 M_{(3,2)}$ & ${}^1_1 M_{(0,1)}$ & ${}^1_1 M_{(2,1)}$ & ${}^1_1 M_{(4,1)}$ & ${}^1_1 M_{(1,0)}$ & ${}^1_1 M_{(3,0)}$ & ${}^1_{20} M_{(0,4)}$ & ${}^1_{20} M_{(2,4)}$ \\ \hline
step 4 & ${}^1_{20} M_{(0,4)}$ & ${}^1_{20} M_{(2,4)}$ & $M_{(4,4)}=0$ &  &  &  &  &  &  &  & $\Omega^{-1}$ & $\Omega^{-1}$ \\ \hline
$\A_4$ & ${}^1_1 M_{(0,3)}$ & ${}^1_1 M_{(2,3)}$ & ${}^1_1 M_{(4,3)}$ & ${}^1_1 M_{(1,2)}$ & ${}^1_1 M_{(3,2)}$ & ${}^1_1 M_{(0,1)}$ & ${}^1_1 M_{(2,1)}$ & ${}^1_1 M_{(4,1)}$ & ${}^1_1 M_{(1,0)}$ & ${}^1_1 M_{(3,0)}$ & ${}^1_1 M_{(3,4)}$ & ${}^1_1 M_{(1,4)}$ \\ \hline
$\A_5$ & ${}_1M_{15}$ & ${}_1M_{17}$ & ${}_1M_{19}$ & ${}_1M_{11}$ & ${}_1M_{13}$ & ${}_1M_{5}$ & ${}_1M_{7}$ & ${}_1M_{9}$ & ${}_1M_{1}$ & ${}_1M_{3}$ & ${}_1M_{23}$ & ${}_1M_{21}$ \\ \hline \hline

$B_1(5^2)$ & $S_1$ & $S_3$ & $S_5$ & $S_7$ & $S_9$ & $S_{11}$ & $S_{13}$ & $S_{15}$ & $S_{17}$ & $S_{19}$ & $S_{21}$ & $S_{23}$ \\ \hline
$\A_0$ & $M_{(1,0)}$ & $M_{(3,0)}$ & $M_{(0,1)}$ & $M_{(2,1)}$ & $M_{(4,1)}$ & $M_{(1,2)}$ & $M_{(3,2)}$ & $M_{(0,3)}$ & $M_{(2,3)}$ & $M_{(4,3)}$ & ${}^1_5 M_{(2,4)}$ & ${}^1_5 M_{(0,4)}$ \\ \hline
step 1 &  &  &  &  &  &  &  & ${}^1_5 M_{(0,4)}$ & ${}^1_5 M_{(2,4)}$ & $M_{(4,4)}=0$ & $\Omega^{-1}$ & $\Omega^{-1}$ \\ \hline
$\A_1$ & $M_{(1,0)}$ & $M_{(3,0)}$ & $M_{(0,1)}$ & $M_{(2,1)}$ & $M_{(4,1)}$ & $M_{(1,2)}$ & $M_{(3,2)}$ & ${}^1_1M_{(0,0)}$ & ${}^1_1 M_{(2,0)}$ & ${}^1_1 M_{(4,0)}$ & ${}^1_{10} M_{(1,4)}$ & ${}^1_{10} M_{(3,4)}$ \\ \hline
step 2 &  &  &  &  &  & ${}^1_{10} M_{(1,4)}$ & ${}^1_{10} M_{(3,4)}$ &  & & & $\Omega^{-1}$ & $\Omega^{-1}$ \\ \hline
$\A_2$ & $M_{(1,0)}$ & $M_{(3,0)}$ & $M_{(0,1)}$ & $M_{(2,1)}$ & $M_{(4,1)}$ & ${}^1_1 M_{(1,1)}$ & ${}^1_1 M_{(3,1)}$ & ${}^1_1 M_{(0,0)}$ & ${}^1_1 M_{(2,0)}$ & ${}^1_1 M_{(4,0)}$ & ${}^1_{15} M_{(2,4)}$ & ${}^1_{15} M_{(0,4)}$ \\ \hline
step 3 &  &  & ${}^1_{15} M_{(0,4)}$ & ${}^1_{15} M_{(2,4)}$ & $M_{(4,4)}=0$ &  &  &  &  &  & $\Omega^{-1}$ & $\Omega^{-1}$ \\ \hline
$\A_3$ & $M_{(1,0)}$ & $M_{(3,0)}$ & $M_{(0,2)}$ & ${}^1_1 M_{(2,2)}$ & ${}^1_1 M_{(4,2)}$ & ${}^1_1 M_{(1,1)}$ & ${}^1_1 M_{(3,1)}$ & ${}^1_1 M_{(0,0)}$ & ${}^1_1 M_{(2,0)}$ & ${}^1_1 M_{(4,0)}$ & ${}^1_{20} M_{(1,4)}$ & ${}^1_{20} M_{(3,4)}$ \\ \hline
step 4 & ${}^1_{20} M_{(1,4)}$ & ${}^1_{20} M_{(3,4)}$ &  &  &  &  &  &  &  &  & $\Omega^{-1}$ & $\Omega^{-1}$  \\ \hline
$\A_4$ & ${}^1_1 M_{(1,3)}$ & ${}^1_1 M_{(3,3)}$ & ${}^1_1 M_{(0,2)}$ & ${}^1_1 M_{(2,2)}$ & ${}^1_1 M_{(4,2)}$ & ${}^1_1 M_{(1,1)}$ & ${}^1_1 M_{(3,1)}$ & ${}^1_1 M_{(0,0)}$ & ${}^1_1 M_{(2,0)}$ & ${}^1_1 M_{(4,0)}$ & ${}^1_1 M_{(2,4)}$ & ${}^1_1 M_{(0,4)}$ \\ \hline
$\A_5$ & ${}_1M_{16}$ & ${}_1M_{18}$ & ${}_1M_{10}$ & ${}_1M_{12}$ & ${}_1M_{14}$ & ${}_1M_{6}$ & ${}_1M_{8}$ & ${}_1M_{0}$ & ${}_1M_{2}$ & ${}_1M_{4}$ & ${}_1M_{22}$ & ${}_1M_{20}$ \\ \hline
\end{tabular} 
\end{table} 

\pagebreak

\begin{table}
\caption{Mapping from principal block to non-principal block of $SL_2(3^3)$, local stable category equivalent.}
\centering 
\renewcommand{\arraystretch}{1.3}
\begin{tabular}{|l|c|c|c|c|c|c|c|c|c||c|c|c||c|}
\multicolumn{7}{l}{Shorthand: $U_i \Omega^j M_a$ written as ${}^j_i M_a$}& \multicolumn{7}{r}{$U$ subscripts modulo $3^3-1=26$} \\  \hline
$B_0(3^2)$ & \multicolumn{9}{|c||}{Layer 2} & \multicolumn{3}{|c||}{Layer 1} & Layer 0 \\ \hline 
& $S_0$ & $S_2$ & $S_4$ & $S_6$ & $S_8$ & $S_{10}$ & $S_{12}$ & $S_{14}$ & $S_{16}$ & $S_{18}$ & $S_{20}$ & $S_{22}$ & $S_{24}$ \\ \hline
$b_2$ & \multicolumn{5}{|c|}{$b_2=0$} & \multicolumn{4}{|c||}{$b_2=1$} & \multicolumn{4}{|c|}{$b_2=2$} \\ \hline
$b_1$ & \multicolumn{9}{|c||}{} & \multicolumn{2}{|c|}{$b_1=0$} & $b_1=1$ & $b_1=2$ \\ \hline
$\A_0=\B_0$ & $M_{0}$ & $M_{2}$ & $M_{4}$ & $M_{6}$ & $M_{8}$ & $M_{10}$ & $M_{12}$ & $M_{14}$ & $M_{16}$ & $M_{18}$ & $M_{20}$ & $M_{22}$ & ${}^1_3 M_{25}$ \\ \hline
step 1; {\footnotesize$r=0$} & &  &  &  &  &  &  &  & &  &  & ${}^1_3 M_{25}$ & $\Omega^{-1}$ \\ \hline
$\A_1$ & $M_{0}$ & $M_{2}$ & $M_{4}$ & $M_{6}$ & $M_{8}$ & $M_{10}$ & $M_{12}$ & $M_{14}$ & $M_{16}$ & $M_{18}$ & $M_{20}$ & ${}^1_9 M_{19}$ & ${}^1_6 M_{24}$ \\ \hline
step 2; {\footnotesize$r=0$} & &  &  &  &  &  & & &  & ${}^1_6 M_{24}$ & $M_{26}=0$ & & $\Omega^{-1}$ \\ \hline
$\A_2$ & $M_{0}$ & $M_{2}$ & $M_{4}$ & $M_{6}$ & $M_{8}$ & $M_{10}$ & $M_{12}$ & $M_{14}$ & $M_{16}$ & ${}^1_9 M_{21}$ & ${}^1_9 M_{23}$ & ${}^1_9 M_{19}$ & ${}^1_9 M_{25}$ \\ \hline
step 3; {\footnotesize$r=1$} & &  &  &  &  & ${}^1_9 M_{19}$ & ${}^1_9 M_{21}$ & ${}^1_9 M_{23}$ & ${}^1_9 M_{25}$ & $\Omega^{-1}$ & $\Omega^{-1}$ & $\Omega^{-1}$ & $\Omega^{-1}$ \\ \hline
$\A_3$ & $M_{0}$ & $M_{2}$ & $M_{4}$ & $M_{6}$ & $M_{8}$ & ${}^1_1 M_{1}$ & ${}^1_1 M_{3}$ & ${}^1_1 M_{5}$ & ${}^1_1 M_{7}$ & ${}_9 M_{21}$ & ${}_9 M_{23}$ & ${}_9 M_{19}$ & ${}^1_{12} M_{24}$ \\ \hline
step 4; {\footnotesize$r=0$} & &  &  &  &  &  & & &  & ${}^1_{12} M_{24}$ & $M_{26}=0$ & & $\Omega^{-1}$ \\ \hline
$\A_4$ & $M_{0}$ & $M_{2}$ & $M_{4}$ & $M_{6}$ & $M_{8}$ & ${}^1_1 M_{1}$ & ${}^1_1 M_{3}$ & ${}^1_1 M_{5}$ & ${}^1_1 M_{7}$ & ${}^1_{18} M_{18}$ & ${}^1_{18} M_{20}$ & ${}_9 M_{19}$ & ${}^1_{15} M_{25}$ \\ \hline
step 5; {\footnotesize$r=0$} & &  &  &  &  &  &  &  & &  &  & ${}^1_{15} M_{25}$ & $\Omega^{-1}$ \\ \hline
$\A_5$ & $M_{0}$ & $M_{2}$ & $M_{4}$ & $M_{6}$ & $M_{8}$ & ${}^1_1 M_{1}$ & ${}^1_1 M_{3}$ & ${}^1_1 M_{5}$ & ${}^1_1 M_{7}$ & ${}^1_{18} M_{18}$ & ${}^1_{18} M_{20}$ & ${}^1_{18} M_{22}$ & ${}^1_{18} M_{24}$ \\ \hline
step 6; {\footnotesize$r=1$} & ${}^1_{18} M_{18}$ & ${}^1_{18} M_{20}$ & ${}^1_{18} M_{22}$ &  ${}^1_{18} M_{24}$ & $M_{26}=0$ &  &  &  &  & $\Omega^{-1}$ & $\Omega^{-1}$ & $\Omega^{-1}$ & $\Omega^{-1}$ \\ \hline
$\A_6$ & ${}^1_1 M_{9}$ & ${}^1_1 M_{11}$ & ${}^1_1 M_{13}$ & ${}^1_1 M_{15}$ & ${}^1_1 M_{17}$ & ${}^1_1 M_{1}$ & ${}^1_1 M_{3}$ & ${}^1_1 M_{5}$ & ${}^1_1 M_{7}$ & ${}_{18} M_{18}$ & ${}_{18} M_{20}$ & ${}_{18} M_{22}$ & ${}^1_{21} M_{25}$ \\ \hline
step 7; {\footnotesize$r=0$} & &  &  &  &  &  &  &  & &  &  & ${}^1_{21} M_{25}$ & $\Omega^{-1}$ \\ \hline
$\A_7$ & ${}^1_1 M_{9}$ & ${}^1_1 M_{11}$ & ${}^1_1 M_{13}$ & ${}^1_1 M_{15}$ & ${}^1_1 M_{17}$ & ${}^1_1 M_{1}$ & ${}^1_1 M_{3}$ & ${}^1_1 M_{5}$ & ${}^1_1 M_{7}$ & ${}_{18} M_{18}$ & ${}_{18} M_{20}$ & ${}^1_1 M_{19}$ & ${}^1_{24} M_{24}$ \\ \hline
step 8; {\footnotesize$r=0$} & &  &  &  &  &  & & &  & ${}^1_{24} M_{24}$ & $M_{26}=0$ & & $\Omega^{-1}$ \\ \hline
$\A_8$ & ${}^1_1 M_{9}$ & ${}^1_1 M_{11}$ & ${}^1_1 M_{13}$ & ${}^1_1 M_{15}$ & ${}^1_1 M_{17}$ & ${}^1_1 M_{1}$ & ${}^1_1 M_{3}$ & ${}^1_1 M_{5}$ & ${}^1_1 M_{7}$ & ${}^1_1 M_{21}$ & ${}^1_1 M_{23}$ & ${}^1_1 M_{19}$ & ${}^1_1 M_{25}$ \\ \hline
$\A_9$ & ${}_1M_9$ & ${}_1M_{11}$ & ${}_1M_{13}$ & ${}_1M_{15}$ & ${}_1M_{17}$ & ${}_1M_{1}$ & ${}_1M_{3}$ & ${}_1M_{5}$ & ${}_1M_{7}$ & ${}_1M_{21}$ & ${}_1M_{23}$ & ${}_1M_{19}$ & ${}_1M_{25}$ \\ \hline
\end{tabular} 
\end{table}
\end{landscape}



\end{document}